\documentclass[12pt]{article}

\usepackage[T1]{fontenc}
\usepackage{lmodern}
\usepackage{amsfonts,amssymb,amsmath,amsthm,mathtools,mathrsfs}
\usepackage{hyperref}
\usepackage{color}
\newtheorem{theorem}{Theorem}[section]
\newtheorem{proposition}[theorem]{Proposition}
\newtheorem{lemma}[theorem]{Lemma}
\newtheorem{corollary}[theorem]{Corollary}

\theoremstyle{definition}
\newtheorem{definition}[theorem]{Definition}

\theoremstyle{remark}
\newtheorem{remark}[theorem]{Remark}

\newcommand{\N}{\mathbb N}
\newcommand{\R}{\mathbb R}
\newcommand{\Qp}{\mathbb Q_p}
\newcommand{\Norm}{\mathscr N}
\newcommand{\dinf}{d_{\infty}}
\newcommand{\Ocoeff}{\mathcal O_E}

\newcommand{\Building}{\mathcal B}
\newcommand{\cInd}{\operatorname{c\text{-}Ind}}

\DeclareMathOperator{\GL}{GL}
\DeclareMathOperator{\Hom}{Hom}
\DeclareMathOperator{\End}{End}

\title{Space of norms on locally algebraic representations}
\author{Alexandre Pyvovarov}
\date{\today}

\begin{document}
\maketitle

\begin{abstract}
Let $F$ and $E$ be finite extensions of $\Qp$, let $\mathbb G$ be a
reductive group over $F$, and put $G=\mathbb G(F)$.  Let $V$ be a
locally algebraic representation of the form
$V=\pi_{\mathrm{sm}}\otimes_E\sigma_{\mathrm{alg}}$, where
$\pi_{\mathrm{sm}}$ is smooth admissible and
$\sigma_{\mathrm{alg}}$ is finite-dimensional algebraic.  We study the
extended Goldman--Iwahori distance on the set of non-Archimedean norms
on $V$.  After fixing a reference norm $\alpha_0$, its finite-distance
component $\Norm_{\alpha_0}(V)$ is the bounded projective limit of the
extended Bruhat--Tits buildings attached to
$V_K=\pi_{\mathrm{sm}}^K\otimes_E\sigma_{\mathrm{alg}}$.  It is complete
for the resulting uniform sup metric; this metric is of $\ell^\infty$
type and is generally not CAT(0).  We prove directly that a $G$-orbit in
$\Norm_{\alpha_0}(V)$ is bounded if and only if this component contains a
$G$-invariant norm.  The invariant norm is the pointwise supremum of the
orbit.  We formulate an integral group-algebra and type-Hecke condition
necessary for an invariant norm.  For $G=\GL_n(F)$ we specialise to
$V=\operatorname{BS}(r)=\pi_{\mathrm{gen}}(r)\otimes_E
\pi_{\mathrm{alg}}(r)$.
\end{abstract}

\tableofcontents

\section{Introduction}

Let $F$ and $E$ be finite extensions of $\Qp$, with fixed absolute value
$|\cdot|$ on $E$.  Let $\mathbb G$ be a reductive group over $F$ and let
$G=\mathbb G(F)$.  Fix a smooth admissible $E$-representation
$\pi_{\mathrm{sm}}$ of $G$ and a finite-dimensional algebraic
$E$-representation $\sigma_{\mathrm{alg}}$ of
$\operatorname{Res}_{F/\Qp}\mathbb G$, and put
\begin{equation}\label{eq:locally-algebraic-factorisation}
  V=\pi_{\mathrm{sm}}\otimes_E\sigma_{\mathrm{alg}}
\end{equation}
with the diagonal action.  Thus $V$ is locally algebraic.  We regard
the factorisation \eqref{eq:locally-algebraic-factorisation} as part of
the data.  Finite direct sums of such representations are handled by
the same argument.  In the arithmetic discussion one may additionally
assume that $\pi_{\mathrm{sm}}$ is finitely generated and has countable
$E$-dimension; neither hypothesis is needed for the metric statements.

The purpose of this paper is to separate a general geometric fact from
the difficult arithmetic question that motivated it.  The geometric
fact is the following theorem.

\begin{theorem}\label{thm:intro}
Fix a norm $\alpha_0$ on $V$.  The finite-distance component
$\Norm_{\alpha_0}(V)$ contains a $G$-invariant norm if and only if the
orbit $G\alpha_0$ is bounded for the Goldman--Iwahori metric.
\end{theorem}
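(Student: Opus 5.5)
The plan is to work directly with the extended Goldman--Iwahori distance
\[
d(\alpha,\beta)=\sup_{v\in V\setminus\{0\}}\bigl|\log\alpha(v)-\log\beta(v)\bigr|\in[0,\infty],
\]
with $G$ acting on norms by $(g\alpha)(v)=\alpha(g^{-1}v)$. The first step is to check that this action is isometric: for all norms $\alpha,\beta$ and $g\in G$ we have $d(g\alpha,g\beta)=d(\alpha,\beta)$, because $v\mapsto g^{-1}v$ is a bijection of $V\setminus\{0\}$. Two consequences follow. First, $G$ preserves the relation ``finite distance'', and since $d(g\alpha_0,\alpha_0)$ is finite for each $g$ (this needs justification; see below), the component $\Norm_{\alpha_0}(V)$ is $G$-stable. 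Second, the boundedness of an orbit $G\beta$ does not depend on the choice of $\beta$ inside $\Norm_{\alpha_0}(V)$, by the triangle inequality $d(g\beta,\beta)\le 2d(\alpha_0,\beta)+d(g\alpha_0,\alpha_0)$.

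The ``only if'' direction is then immediate. If $\beta\in\Norm_{\alpha_0}(V)$ is $G$-invariant, then
\[
d(g\alpha_0,\alpha_0)\le d(g\alpha_0,g\beta)+d(\beta,\alpha_0)=2d(\alpha_0,\beta)<\infty
\]
uniformly in $g$.

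For the ``if'' direction, suppose $d(g\alpha_0,\alpha_0)\le C$ for all $g\in G$, and define
\[
\beta(v)=\sup_{g\in G}\alpha_0(g^{-1}v).
\]
By assumption, $\alpha_0(g^{-1}v)\le e^{C}\alpha_0(v)$, so $\beta$ is finite and satisfies $\alpha_0\le\beta\le e^{C}\alpha_0$. The non-Archimedean axioms pass to the supremum: $\beta(\lambda v)=|\lambda|\beta(v)$, $\beta(v+w)\le\max(\beta(v),\beta(w))$, and $\beta(v)=0$ only when $v=0$ because $\beta\ge\alpha_0$. The norm $\beta$ is $G$-invariant because the supremum is taken over a group. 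Finally $d(\beta,\alpha_0)\le C$, so $\beta\in\Norm_{\alpha_0}(V)$. This argument uses neither completeness nor any CAT(0) property, which matters because the metric is only of $\ell^\infty$ type.

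The main obstacle is to make sure the notion of ``norm'' used in the paper is preserved by each step. The paper identifies $\Norm_{\alpha_0}(V)$ with a bounded projective limit of extended buildings of the spaces $V_K$, so norms may be required to be compatible with the $K$-fixed filtration, or to be of a restricted type (for instance diagonalisable, or with values in $|E^\times|$ on lattices). Two things must be checked. First, $g\alpha_0$ must again be such a norm at finite distance from $\alpha_0$. This follows because $g$ carries $V_K$ onto $V_{gKg^{-1}}$, and $V_K$ and $V_{gKg^{-1}}$ both lie in $V_{K'}$ for a smaller open compact subgroup $K'$, where everything is finite-dimensional and all norms on a finite-dimensional space are equivalent. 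Second, the supremum $\beta$ must restrict on each $V_K$ to a point of the building. I would handle this by noting that on the finite-dimensional space $V_K$ every non-Archimedean norm over the local field $E$ is diagonalisable, so $\beta|_{V_K}$ lies in the extended building. The uniform bound $d(\beta|_{V_K},\alpha_0|_{V_K})\le C$ then places the family $(\beta|_{V_K})_K$ in the bounded projective limit.
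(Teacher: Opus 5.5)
Your argument for both directions is the paper's proof of Theorem~\ref{thm:bounded-orbit}. Invariance of $\beta$ plus the triangle inequality gives $\dinf(\alpha_0,g\alpha_0)\le2\dinf(\alpha_0,\beta)$. Conversely, the orbit supremum $\sup_{g}\alpha_0(g^{-1}v)$, squeezed between $\alpha_0$ and $e^{C}\alpha_0$, is the invariant norm; the paper packages that squeeze as Proposition~\ref{prop:supremum}. This core is correct and complete.

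One assertion you make along the way is false, and its justification is exactly the fallacy the paper warns against. The distance $\dinf(g\alpha_0,\alpha_0)$ need \emph{not} be finite for a given $g$, so $\Norm_{\alpha_0}(V)$ need not be $G$-stable. Equivalence of norms on each finite-dimensional $V_{K'}$ only makes every level distance $d_K$ finite. But $\dinf$ is their supremum over $K$, and that supremum can be infinite; see the remarks after \eqref{eq:distance-fixed} and \eqref{eq:orbit-on-levels}. Concretely, suppose the vectors $u_n$ and $g^{-1}u_n$ ($n\ge1$) are jointly linearly independent. A Hamel max norm giving them weights $1$ and $e^n$ then has $\dinf(g\alpha_0,\alpha_0)=\infty$.

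Nothing in your proof actually uses this claim, so you should simply delete it. The identity $\dinf(g\alpha,g\beta)=\dinf(\alpha,\beta)$ and the triangle inequality both hold in $[0,\infty]$. Finiteness of $\dinf(g\alpha_0,\alpha_0)$ is the hypothesis in the ``if'' direction and the conclusion in the ``only if'' direction, so it never needs separate proof.

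Your other worry, about restricted types of norms, is also moot. In the paper a norm is any ultrametric norm on $V$. The building description of Proposition~\ref{prop:all-norms-limit} is a consequence of that definition, not an extra constraint, so the supremum needs no further check.
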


The proof is elementary.  If the orbit is bounded, then
\[
  \alpha_G(v)=\sup_{g\in G}\alpha_0(g^{-1}v)
\]
is a finite, nondegenerate, $G$-invariant norm equivalent to
$\alpha_0$.  Thus the fixed-point step does not require a CAT(0)
circumcentre argument.

The metric used here is the sup metric $\dinf$.  On a diagonal apartment
it is an $\ell^\infty$ metric, rather than the Euclidean metric of a
Bruhat--Tits building.  Goldman and Iwahori introduced this metric in
finite dimension \cite{MR144889}; its injective geometry has been
developed further by Haettel \cite{Haettel2022}.  It is important not to
identify $\dinf$ with the usual CAT(0) metric.

The geometry can be pushed further.  We prove below that every finite-distance
component is hyperconvex, hence injective, by an explicit supremum
construction.  Every bounded family of norms has circumradius exactly half
its diameter, with an explicit circumcentre.  For a bounded \(G\)-orbit this
gives the sharp identity
\[
 \inf_{\beta\in\Norm_{\alpha_0}(V)^G}
 \dinf(\alpha_0,\beta)
 =
 \frac12
 \operatorname{diam}(G\alpha_0).
\]
These facts do not manufacture a bounded orbit, but they remove all loss in
the metric fixed-point step once arithmetic has supplied boundedness.

The arithmetic issue is therefore precise: one must construct a norm
whose entire $G$-orbit is uniformly bounded.  Inequalities for Jacquet
exponents or Hecke eigenvalues are necessary, but turning them into one
uniform estimate is the substantive step.  This is closely related to
the integrality questions in the Breuil--Schneider conjecture
\cite{MR2359853,MR3409331}.  Known proofs in significant cases use
global methods, patching, or explicit integral Hecke modules
\cite{Sorensen2013,CEGGPS2016,Pyvovarov2021}.

There is also a useful exact finite-dimensional approximation.  The
decomposition
\[
  V=\bigcup_K\bigl(\pi_{\mathrm{sm}}^K\otimes_E\sigma_{\mathrm{alg}}\bigr)
\]
identifies the set of all norms with a projective limit of extended
Bruhat--Tits buildings.  The finite-distance component of a reference
norm is the subset on which the distances from the induced basepoints
are bounded uniformly in $K$.  This ``bounded projective limit'' is the
correct object for the metric problem.  It is stronger than the ordinary
coordinatewise projective limit, and the distinction is exactly the
uniformity needed for bounded orbits.

For a matrix $M$ with coefficients in $E$, define $v_E(M)=\min_{a,b}v_E(M_{ab})$, where $v_E$ is the normalized valuation on $E$.

\section{The locally algebraic setting}
\label{sec:locally-algebraic}

\subsection{The finite-dimensional exhaustion}

Let $K\subseteq G$ be compact open.  Define
\begin{equation}\label{eq:smooth-union}
 V_K=\pi_{\mathrm{sm}}^K\otimes_E\sigma_{\mathrm{alg}}\subseteq V.
\end{equation}
If $L\subseteq K$, then $V_K\subseteq V_L$.  Smoothness and admissibility
give
\begin{equation}\label{eq:locally-algebraic-union}
 V=\bigcup_KV_K,\qquad
 \dim_EV_K=(\dim_E\pi_{\mathrm{sm}}^K)(\dim_E\sigma_{\mathrm{alg}})<\infty.
\end{equation}
These spaces are not, in general, the fixed spaces $V^K$: the compact
subgroup acts nontrivially on the algebraic factor.

\begin{lemma}[Reconstruction from algebraic levels]\label{lem:reconstruct}
Suppose that for every compact open subgroup $K$ a norm $\alpha_K$ is
given on $V_K$, and whenever $L\subseteq K$ one has
$\alpha_L|_{V_K}=\alpha_K$.  Then there is a unique norm $\alpha$ on $V$
whose restriction to $V_K$ is $\alpha_K$ for every $K$.
\end{lemma}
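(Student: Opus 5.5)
We define $\alpha$ pointwise: for $v\in V$, pick any compact open $K$ with $v\in V_K$ and set $\alpha(v)=\alpha_K(v)$. Such a $K$ exists by \eqref{eq:locally-algebraic-union}. The work is to show that this value does not depend on the choice of $K$, that the resulting function satisfies the norm axioms, and that it is unique.

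For independence of the choice, take $v\in V_K\cap V_{K'}$ and put $L=K\cap K'$, which is again compact open. Since $L\subseteq K$ and $L\subseteq K'$, we have $V_K,V_{K'}\subseteq V_L$. The compatibility hypothesis then gives
\[
\alpha_K(v)=\alpha_L(v)=\alpha_{K'}(v).
\]
This is the only step that needs care. The key point is that the family $\{V_K\}$ is directed: any two members lie in a third, namely the one indexed by the intersection. That directedness comes from the inclusion $V_K\subseteq V_L$ for $L\subseteq K$. That inclusion in turn follows from $\pi_{\mathrm{sm}}^K\subseteq\pi_{\mathrm{sm}}^L$, with the algebraic factor $\sigma_{\mathrm{alg}}$ left unchanged.

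Next we check the norm axioms; each involves at most two vectors. Given $v,w\in V$, choose $K_1,K_2$ with $v\in V_{K_1}$ and $w\in V_{K_2}$, and put $K=K_1\cap K_2$. Then $v$, $w$, $v+w$ and $\lambda v$ all lie in the single finite-dimensional space $V_K$. On that space $\alpha$ coincides with the norm $\alpha_K$, so we obtain, by the norm axioms for $\alpha_K$:
\begin{itemize}
\item the ultrametric inequality $\alpha(v+w)\le\max(\alpha(v),\alpha(w))$;
\item homogeneity $\alpha(\lambda v)=|\lambda|\,\alpha(v)$;
\item nondegeneracy: $\alpha(v)=0$ implies $v=0$.
\end{itemize}

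Uniqueness is immediate. Any norm whose restriction to each $V_K$ is $\alpha_K$ is forced to take the value $\alpha_K(v)$ at every $v\in V_K$, and the $V_K$ cover $V$.

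I do not expect a genuine obstacle here. The only point to state explicitly is the directedness of the exhaustion. No topology or completeness is involved, since the lemma concerns norms on the algebraic vector space $V$ and not on a completion.
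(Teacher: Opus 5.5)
Your proof is correct and follows the paper's argument: define $\alpha$ levelwise, check well-definedness at a common refinement (you take $K\cap K'$, while the paper takes any compact open $M\subseteq K\cap L$), and verify the norm axioms at a single finite-dimensional level $V_K$. Your explicit treatment of nondegeneracy and uniqueness fills in points the paper leaves implicit.
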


\begin{proof}
For $v\in V$, choose $K$ with $v\in V_K$ and set
$\alpha(v)=\alpha_K(v)$.  If $v\in V_K\cap V_L$, choose a compact open
subgroup $M\subseteq K\cap L$; compatibility gives the same value on
$V_M$.  Homogeneity and the ultrametric inequality reduce to a common
finite-dimensional level.
\end{proof}

\subsection{Norms and the extended distance}

\begin{definition}
A \emph{seminorm} on $V$ is a function
$\alpha:V\rightarrow\R_{\geq 0}$ satisfying
\begin{align*}
  \alpha(\lambda v)&=|\lambda|\alpha(v),\\
  \alpha(v+w)&\leq\max\{\alpha(v),\alpha(w)\}
\end{align*}
for $\lambda\in E$ and $v,w\in V$.  It is a \emph{norm} if in addition
$\alpha(v)=0$ implies $v=0$.
\end{definition}

The set of norms on any $E$-vector space is nonempty.  Indeed, choose a
Hamel basis $(e_i)_{i\in I}$ and positive weights $(c_i)_{i\in I}$, and
put
\[
  \alpha\left(\sum_{i\in I}a_i e_i\right)
     =\max_i |a_i|c_i,
\]
where only finitely many $a_i$ are nonzero.

For two norms $\alpha$ and $\beta$ define
\begin{equation}\label{eq:distance}
  \dinf(\alpha,\beta)
   =\sup_{v\in V\setminus\{0\}}
     \left|\log\frac{\alpha(v)}{\beta(v)}\right|
   \in\R_{\geq0}\cup\{+\infty\}.
\end{equation}
This is an extended metric.  For $R\geq0$, the inequality
$\dinf(\alpha,\beta)\leq R$ is equivalent to
\begin{equation}\label{eq:comparison}
  e^{-R}\beta(v)\leq\alpha(v)\leq e^R\beta(v)
  \qquad(v\in V).
\end{equation}

For a compact open subgroup $K$, write $d_K$ for the same distance on
the finite-dimensional space $V_K$.  The exhaustion
\eqref{eq:locally-algebraic-union} gives
\begin{equation}\label{eq:distance-fixed}
  \dinf(\alpha,\beta)
    =\sup_Kd_K(\alpha|_{V_K},\beta|_{V_K}).
\end{equation}
Each term on the right is finite, but their supremum need not be.

\begin{definition}\label{def:component}
For a reference norm $\alpha_0$, its \emph{finite-distance component}
is
\[
  \Norm_{\alpha_0}(V)
    =\{\alpha:\dinf(\alpha,\alpha_0)<\infty\}.
\]
The restriction of $\dinf$ to this set is a genuine metric.
\end{definition}

Different choices of reference norm give either the same component or
disjoint components.  Thus there is generally no single metric space of
all norms: there is a disjoint union of finite-distance components.

\section{Bounded projective limits}
\label{sec:inverse-limit}

\subsection{The projective system}

Let $\mathcal K(G)$ be the set of compact open subgroups of $G$, ordered
by reverse inclusion, and put
\[
 \Building_K^{\mathrm e}=\Building^{\mathrm e}(\GL(V_K),E).
\]
The points of $\Building_K^{\mathrm e}$ may be realised as norms on
$V_K$ \cite{MR144889,MR2439729}.  If $L\subseteq K$, then
$V_K\subseteq V_L$, and restriction gives a surjective, one-Lipschitz
map
\[
  r_{L,K}:\Building_L^{\mathrm e}\longrightarrow
  \Building_K^{\mathrm e},\qquad
  x\longmapsto x|_{V_K}.
\]
The maps satisfy $r_{K,K}=1$ and
$r_{L,K}r_{M,L}=r_{M,K}$ for $M\subseteq L\subseteq K$.  Thus they
form a projective system.

\begin{proposition}[Norms as a projective limit]
\label{prop:all-norms-limit}
Restriction induces a canonical bijection
\[
  \{\text{norms on }V\}
   \xrightarrow{\ \sim\ }
  \varprojlim_{K\in\mathcal K(G)}\Building_K^{\mathrm e}.
\]
Under this bijection,
\[
  \dinf(\alpha,\beta)
   =\sup_K d_K(\alpha|_{V_K},\beta|_{V_K}),
\]
where both sides are allowed to be infinite.
\end{proposition}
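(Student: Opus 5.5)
The plan is to identify the points of each $\Building_K^{\mathrm e}$ with norms on the finite-dimensional space $V_K$, as recalled above via \cite{MR144889,MR2439729}, and then to assemble a norm from a compatible family using Lemma~\ref{lem:reconstruct}. Concretely, the projective limit consists of families $(x_K)_{K\in\mathcal K(G)}$ with $x_K\in\Building_K^{\mathrm e}$ and $r_{L,K}(x_L)=x_K$ whenever $L\subseteq K$. Under the identification of points with norms, this condition says exactly that $x_L|_{V_K}=x_K$.

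\emph{Well-definedness and injectivity.} Given a norm $\alpha$ on $V$, send it to the family $(\alpha|_{V_K})_K$. Each restriction is a norm on the finite-dimensional space $V_K$, hence a point of $\Building_K^{\mathrm e}$; here one uses the Goldman--Iwahori theorem that every norm on a finite-dimensional space over the local field $E$ is diagonalisable. Transitivity of restriction shows that the family is compatible, so it lies in the projective limit. Injectivity is immediate from the exhaustion \eqref{eq:locally-algebraic-union}: two norms agreeing on every $V_K$ agree on $V=\bigcup_K V_K$.

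\emph{Surjectivity.} A compatible family $(x_K)$ is, after the identification, precisely the input data of Lemma~\ref{lem:reconstruct}. That lemma produces a unique norm $\alpha$ with $\alpha|_{V_K}=x_K$ for all $K$, which is the required preimage. The only subtle point is that the directed-set hypothesis is genuinely available: given $K$ and $L$, the intersection $K\cap L$ is again compact open, so $\mathcal K(G)$ is directed. This is what the lemma uses to check consistency of values. Canonicity is clear, since no choices enter beyond restriction.

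\emph{The distance formula.} This is \eqref{eq:distance-fixed}. Every nonzero $v$ lies in some $V_K$, so the supremum over $V\setminus\{0\}$ in \eqref{eq:distance} splits as a supremum over $K$ of the suprema over $V_K\setminus\{0\}$. The latter are exactly $d_K(\alpha|_{V_K},\beta|_{V_K})$, and the equality holds in $\R_{\geq0}\cup\{+\infty\}$.

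The main obstacle, modest as it is, lies in the identification itself. One must make sure that the metric on $\Building_K^{\mathrm e}$ implicit in ``$d_K$'' is the sup (Goldman--Iwahori) distance on norms, not the Euclidean CAT(0) metric. The realisation of the building as norms is a bijection of sets, and the statement only requires $d_K$ to be the same distance \eqref{eq:distance} computed on $V_K$, as defined just above. I would state explicitly that $\Building_K^{\mathrm e}$ is regarded as the set of all norms on $V_K$, equipped with $d_K$. With that convention the proposition reduces entirely to Lemma~\ref{lem:reconstruct} and \eqref{eq:distance-fixed}.
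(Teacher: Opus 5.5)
Your proposal is correct and follows the paper's proof. Restriction gives a compatible family, Lemma~\ref{lem:reconstruct} inverts this, and the metric identity is exactly \eqref{eq:distance-fixed}. Your additional remarks make explicit three points the paper leaves implicit: Goldman--Iwahori diagonalisability is what lets every norm on $V_K$ count as a point of $\Building_K^{\mathrm e}$, $\mathcal K(G)$ is directed, and $d_K$ is the sup distance rather than the CAT(0) metric.
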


\begin{proof}
Compatibility of the restrictions is clear.  Conversely, a compatible
family $(x_K)_K$ gives a unique norm on $V$ by Lemma
\ref{lem:reconstruct}.  The metric identity is
\eqref{eq:distance-fixed}.
\end{proof}

Write $o_K=\alpha_0|_{V_K}$.  The preceding proposition gives the more
precise description
\begin{equation}\label{eq:bounded-projective-limit}
 \Norm_{\alpha_0}(V)
 \cong
 \left\{(x_K)_K\in
   \varprojlim_K\Building_K^{\mathrm e}:
   \sup_K d_K(x_K,o_K)<\infty\right\}.
\end{equation}
This is the \emph{bounded projective limit} based at
$o=(o_K)_K$, and its metric is
\begin{equation}\label{eq:inverse-limit-metric}
 d_{\mathrm{bd}}(x,y)=\sup_Kd_K(x_K,y_K).
\end{equation}
In particular, for every $R\geq0$,
\begin{equation}\label{eq:fixed-radius-limit}
 \overline B_{\Norm}(\alpha_0,R)
 \cong
 \varprojlim_K\overline B_{\Building_K^{\mathrm e}}(o_K,R),
 \qquad
 \Norm_{\alpha_0}(V)
 =\bigcup_{m\in\N}\overline B_{\Norm}(\alpha_0,m).
\end{equation}
The ordinary projective-limit topology is only the topology of
coordinatewise convergence.  The topology from
\eqref{eq:inverse-limit-metric} is the stronger topology of uniform
convergence over all $K$.  Confusing the two loses precisely the
uniform estimate needed for a bounded orbit.

\subsection{\texorpdfstring{The $G$-action on the system}{The G-action on the system}}

For $g\in G$ and $K\in\mathcal K(G)$ define an isometry
\[
 \tau_{g,K}:
 \Building_{g^{-1}Kg}^{\mathrm e}\longrightarrow
 \Building_K^{\mathrm e},\qquad
 (\tau_{g,K}x)(v)=x(g^{-1}v)\quad(v\in V_K).
\]
Indeed, the diagonal action gives
$gV_{g^{-1}Kg}=V_K$: on the smooth factor it sends
$\pi_{\mathrm{sm}}^{g^{-1}Kg}$ to $\pi_{\mathrm{sm}}^K$, and on the
algebraic factor it is an automorphism of
$\sigma_{\mathrm{alg}}$.
These transport maps commute with restriction and satisfy the evident
cocycle identity.  If $x=(x_K)_K$ represents a global norm, then
\begin{equation}\label{eq:action-on-limit}
  (gx)_K=\tau_{g,K}(x_{g^{-1}Kg}).
\end{equation}
Consequently,
\begin{equation}\label{eq:orbit-on-levels}
 \dinf(\alpha,g\alpha)
 =\sup_Kd_K\!\left(
   \alpha|_{V_K},
   \tau_{g,K}(\alpha|_{V_{g^{-1}Kg}})
 \right).
\end{equation}
Formula \eqref{eq:orbit-on-levels} makes clear that boundedness on every
individual $V_K$ is insufficient: the bound must be independent of
both $K$ and $g$.

\subsection{Compactness and finite diagrams}

Closed bounded subsets of each finite-dimensional building
$\Building_K^{\mathrm e}$ are compact.  This gives a useful
local-to-global principle.  A \emph{finite building diagram} means
finitely many equations, each of one of the forms
\[
 r_{L,K}(x_L)=x_K\quad(L\subseteq K),\qquad
 \tau_{g,K}(x_{g^{-1}Kg})=x_K\quad(g\in G).
\]

\begin{theorem}[Uniform finite-diagram criterion]
\label{thm:finite-diagrams}
Fix $R\geq0$.  There is a $G$-invariant
$\beta\in\Norm_{\alpha_0}(V)$ with
$\dinf(\beta,\alpha_0)\leq R$ if and only if every finite building
diagram has a solution with
\[
  x_K\in\overline B_{\Building_K^{\mathrm e}}(o_K,R)
\]
at every vertex occurring in the diagram.
\end{theorem}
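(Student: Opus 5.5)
The forward direction is immediate.  If $\beta$ is $G$-invariant with $\dinf(\beta,\alpha_0)\leq R$, set $x_K=\beta|_{V_K}$ for every $K$.  By Proposition \ref{prop:all-norms-limit} this family is compatible under every $r_{L,K}$.  Invariance $g\beta=\beta$, together with \eqref{eq:action-on-limit}, gives $\tau_{g,K}(x_{g^{-1}Kg})=x_K$.  By \eqref{eq:distance-fixed}, $d_K(x_K,o_K)\leq\dinf(\beta,\alpha_0)\leq R$.  So this single family solves every finite building diagram simultaneously, inside the prescribed balls.

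For the converse I would use a compactness (Tychonoff) argument.  Let
\[
X=\prod_{K\in\mathcal K(G)}\overline B_{\Building_K^{\mathrm e}}(o_K,R).
\]
Each factor is compact, being closed and bounded in a finite-dimensional extended building, as stated before the theorem.  Hence $X$ is compact in the product topology.  For each single equation $e$, of either form $r_{L,K}(x_L)=x_K$ or $\tau_{g,K}(x_{g^{-1}Kg})=x_K$, let $X_e\subseteq X$ be its solution set.  Since $r_{L,K}$ is one-Lipschitz and $\tau_{g,K}$ is an isometry, both sides of $e$ are continuous functions of finitely many coordinates.  The target space is Hausdorff, so $X_e$ is closed.

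There is one subtlety: a finite diagram only constrains the vertices occurring in it.  A solution of a finite diagram with those vertices in the balls extends to a point of $X_{e_1}\cap\dots\cap X_{e_n}$ by choosing $x_K=o_K$ at all other vertices.  So the hypothesis says exactly that the family $\{X_e\}$ has the finite intersection property.  Compactness then yields a point $x=(x_K)_K\in\bigcap_eX_e$.

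It remains to turn this point into a norm.  Since $x$ satisfies all restriction equations, it lies in $\varprojlim_K\Building_K^{\mathrm e}$.  By Proposition \ref{prop:all-norms-limit} (i.e.\ Lemma \ref{lem:reconstruct}) it defines a norm $\beta$ on $V$ with $\beta|_{V_K}=x_K$.  Since $d_K(x_K,o_K)\leq R$ for all $K$, the metric identity gives $\dinf(\beta,\alpha_0)=\sup_Kd_K(x_K,o_K)\leq R$, so $\beta\in\Norm_{\alpha_0}(V)$.  The transport equations together with \eqref{eq:action-on-limit} give $(g\beta)_K=x_K=\beta_K$ for all $K$ and $g$, hence $g\beta=\beta$.

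The main obstacle I expect is the compactness of the closed balls $\overline B(o_K,R)$ in the topology in which $r_{L,K}$ and $\tau_{g,K}$ are continuous.  Here I would use the realisation of points as norms on $V_K$, and the fact that norms within distance $R$ of $o_K$ form a closed, bounded set.  Concretely, choose a basis of $V_K$ in which $o_K$ is diagonal (a splitting basis, which exists for norms on finite-dimensional spaces over the locally compact field $E$).  In that basis the ball is the image of a compact set of pairs (element of a compact subgroup, bounded diagonal weights).  Alternatively, I would simply cite the properness of $\Building_K^{\mathrm e}$ as stated.  The point to emphasise is that the uniform bound $R$ over all $K$ is what survives the limit, and this is exactly the bounded projective limit \eqref{eq:fixed-radius-limit}.
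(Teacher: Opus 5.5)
Your proposal is correct and follows essentially the same route as the paper. Necessity comes from restricting $\beta$. Sufficiency comes from Tychonoff compactness of $\prod_K\overline B_{\Building_K^{\mathrm e}}(o_K,R)$, closedness of each equation locus, the finite-intersection property, and reconstruction through Proposition~\ref{prop:all-norms-limit}, with \eqref{eq:action-on-limit} supplying invariance. Your extra remarks only make explicit details the paper's proof leaves implicit: padding a finite solution by $o_K$ at unused vertices to obtain the finite-intersection property, and justifying compactness of the balls.
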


\begin{proof}
Necessity follows by restricting $\beta$.  For sufficiency, consider
the compact product
\[
  X_R=\prod_K\overline B_{\Building_K^{\mathrm e}}(o_K,R).
\]
Every restriction or transport equation cuts out a closed subset of
$X_R$.  The hypothesis is exactly the finite-intersection property for
this collection of closed subsets.  Compactness of $X_R$ gives a point
satisfying all equations.  The restriction equations reconstruct a
global norm by Proposition \ref{prop:all-norms-limit}; the transport
equations and \eqref{eq:action-on-limit} make it $G$-invariant.
\end{proof}

\begin{corollary}[Inverse-limit test for a bounded orbit]
\label{cor:inverse-limit-bounded}
The orbit $G\alpha_0$ is bounded if and only if there is one
$R<\infty$ for which all finite building diagrams in Theorem
\ref{thm:finite-diagrams} are solvable.  If
\[
 D=\sup_{g\in G}\dinf(\alpha_0,g\alpha_0)<\infty,
\]
one may take $R=D$.
\end{corollary}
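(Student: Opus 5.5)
The plan is to reduce the corollary to Theorem~\ref{thm:finite-diagrams} by way of the supremum construction from the introduction. The one nontrivial input is this: if $D=\sup_{g}\dinf(\alpha_0,g\alpha_0)<\infty$, then $\alpha_G(v)=\sup_{g\in G}\alpha_0(g^{-1}v)$ is a $G$-invariant norm with $\dinf(\alpha_G,\alpha_0)\le D$. To prove it, note that $\dinf(\alpha_0,g\alpha_0)\le D$ gives, by \eqref{eq:comparison}, $\alpha_0(g^{-1}v)\le e^D\alpha_0(v)$ for all $g$ and $v$. Hence
\[
\alpha_0(v)\le\alpha_G(v)\le e^D\alpha_0(v),
\]
where the lower bound comes from taking $g=1$. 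So $\alpha_G$ is finite and nondegenerate, and $\dinf(\alpha_G,\alpha_0)\le D$. Homogeneity and the ultrametric inequality survive taking a pointwise supremum, since
\[
\alpha_0(g^{-1}(v+w))\le\max\{\alpha_0(g^{-1}v),\alpha_0(g^{-1}w)\}\le\max\{\alpha_G(v),\alpha_G(w)\}.
\]
Invariance follows by reindexing $g\mapsto h^{-1}g$ in the supremum. Thus $\alpha_G\in\Norm_{\alpha_0}(V)$.

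\emph{Forward direction.} Suppose the orbit is bounded. Then $D<\infty$, because $\dinf$ is symmetric and bounded orbits have finite diameter; in particular $\dinf(\alpha_0,g\alpha_0)\le\operatorname{diam}(G\alpha_0)$. Take $\beta=\alpha_G$ and apply the necessity half of Theorem~\ref{thm:finite-diagrams} with $R=D$: restricting $\beta$ to each $V_K$ gives $x_K=\beta|_{V_K}$. This family solves every restriction equation by construction and every transport equation by invariance and \eqref{eq:action-on-limit}. It also satisfies $d_K(x_K,o_K)\le\dinf(\beta,\alpha_0)\le D$. So every finite diagram is solvable with $R=D$, which also proves the final sentence of the corollary.

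\emph{Reverse direction.} Suppose some $R<\infty$ makes all finite diagrams solvable. The sufficiency half of Theorem~\ref{thm:finite-diagrams} then produces a $G$-invariant $\beta$ with $\dinf(\beta,\alpha_0)\le R$. Since each $g$ acts isometrically, $\dinf(g\alpha_0,g\beta)=\dinf(\alpha_0,\beta)$. Using $g\beta=\beta$, the triangle inequality gives
\[
\dinf(\alpha_0,g\alpha_0)\le\dinf(\alpha_0,\beta)+\dinf(\beta,g\alpha_0)\le 2R,
\]
so the orbit is bounded with diameter at most $2R$. The isometry property is immediate from the definition \eqref{eq:distance} by substituting $v\mapsto g^{-1}v$.

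The only step needing care is the estimate $\alpha_G\le e^D\alpha_0$, which is what makes the supremum finite. It depends on the direction of the inequality in \eqref{eq:comparison} applied to the pair $(g\alpha_0,\alpha_0)$. Everything else is bookkeeping with the already-established Theorem~\ref{thm:finite-diagrams}.
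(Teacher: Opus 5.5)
Your proposal is correct and follows essentially the same route as the paper. The paper derives the equivalence from Theorem~\ref{thm:bounded-orbit} together with Theorem~\ref{thm:finite-diagrams}, and gets $R=D$ from $\alpha_0\le\alpha_G\le e^D\alpha_0$; you inline the supremum construction and the triangle-inequality step instead of citing Theorem~\ref{thm:bounded-orbit}.
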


\begin{proof}
The first assertion follows from Theorems \ref{thm:bounded-orbit} and
\ref{thm:finite-diagrams}.  For the last assertion, the invariant
supremum norm \eqref{eq:invariant-sup} satisfies
$\alpha_0\leq\alpha_G\leq e^D\alpha_0$.
\end{proof}

This compactness statement is the useful leverage supplied by the
inverse limit.  It does not manufacture the uniform radius $R$:
arithmetic input must do that.  Once a common $R$ is known, however,
one may solve larger and larger finite Hecke/type diagrams without
choosing compatible solutions by hand; compactness extracts a single
compatible and equivariant family.

\subsection{Integral form}

Let $\varpi_E$ be a uniformiser of $E$, let
$q_E$ be the cardinality of its residue field, and normalise
$|\varpi_E|=q_E^{-1}$.  The unit ball of any norm gives a gauge norm at
distance at most $\log q_E$ from it.  Thus, at the price of this fixed
rounding error, $\alpha_0$ may be assumed to be the gauge of a full
$\Ocoeff$-lattice $\Lambda_0\subset V$.  Put
$\Lambda_{0,K}=\Lambda_0\cap V_K$.

\begin{proposition}[Compatible lattice criterion]
\label{prop:lattice-limit}
There is a $G$-stable full lattice $\Lambda\subset V$ commensurable
with $\Lambda_0$ if and only if there are an integer $N\geq0$ and full
lattices $\Lambda_K\subset V_K$ such that, for all $L\subseteq K$ and
$g\in G$,
\begin{align}
 \Lambda_L\cap V_K&=\Lambda_K,\label{eq:lattice-restriction}\\
 g\Lambda_{g^{-1}Kg}&=\Lambda_K,\label{eq:lattice-transport}\\
 \varpi_E^N\Lambda_{0,K}
  \subseteq\Lambda_K
  &\subseteq\varpi_E^{-N}\Lambda_{0,K}.
  \label{eq:lattice-uniform}
\end{align}
For a fixed $N$, it is enough that every finite collection of
\eqref{eq:lattice-restriction}--\eqref{eq:lattice-transport} be
solvable subject to \eqref{eq:lattice-uniform}.
\end{proposition}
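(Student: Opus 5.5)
The plan is to argue in both directions directly at the level of lattices, and then get the finite-collection clause by a compactness argument modelled on Theorem \ref{thm:finite-diagrams}. Throughout, "full lattice" in $V$ means an $\Ocoeff$-submodule $\Lambda$ such that $\Lambda\cap V_K$ is a full lattice in $V_K$ for every $K$. "Commensurable with $\Lambda_0$" means that $\varpi_E^N\Lambda_0\subseteq\Lambda\subseteq\varpi_E^{-N}\Lambda_0$ for some $N$.

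\emph{Necessity.} Given $\Lambda$, put $\Lambda_K=\Lambda\cap V_K$. Then \eqref{eq:lattice-restriction} is immediate, since $V_K\subseteq V_L$. For \eqref{eq:lattice-transport}, use $gV_{g^{-1}Kg}=V_K$ together with $g\Lambda=\Lambda$. For \eqref{eq:lattice-uniform}, intersect the commensurability inclusions with $V_K$.

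\emph{Sufficiency.} Set $\Lambda=\bigcup_K\Lambda_K$.
\begin{itemize}
\item It is an $\Ocoeff$-module because the $K$ form a directed system (take $M\subseteq K\cap L$), and $\Lambda_K\subseteq\Lambda_M$ by \eqref{eq:lattice-restriction}.
\item The equality $\Lambda\cap V_K=\Lambda_K$ follows from \eqref{eq:lattice-restriction} applied at a common refinement.
\item $G$-stability: for $v\in\Lambda_{K'}$, write $K'=g^{-1}Kg$ with $K=gK'g^{-1}$. Then \eqref{eq:lattice-transport} gives $gv\in\Lambda_K$.
\item Commensurability: take the union over $K$ of \eqref{eq:lattice-uniform}. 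This is where uniformity of $N$ in $K$ is essential.
\end{itemize}

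\emph{Finite-collection clause.} For a fixed $N$, the set $S_K$ of lattices $\Lambda_K$ with $\varpi_E^N\Lambda_{0,K}\subseteq\Lambda_K\subseteq\varpi_E^{-N}\Lambda_{0,K}$ is finite. Indeed, such a lattice is determined by its image in the finite module $\varpi_E^{-N}\Lambda_{0,K}/\varpi_E^{N}\Lambda_{0,K}$, which has finite length over $\Ocoeff$ with finite residue field. Hence $X=\prod_KS_K$, with each $S_K$ discrete, is compact by Tychonoff. Each single equation \eqref{eq:lattice-restriction} or \eqref{eq:lattice-transport} involves only two coordinates, so it cuts out a clopen subset of $X$. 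Finite solvability is exactly the finite-intersection property for these sets. A point in the intersection then satisfies all equations, and the sufficiency argument above applies to it.

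The main subtlety I expect lies in this step. One must check that restricting a lattice from $V_L$ to $V_K$ preserves the bounds \eqref{eq:lattice-uniform}, which holds because $\Lambda_{0,L}\cap V_K=\Lambda_{0,K}$. One must also check that transport is compatible with $\Lambda_{0,K}$. Here no compatibility of $\Lambda_0$ with $g$ is needed: the bound is imposed separately at each vertex. Finally, I should confirm that the paper's notion of "full lattice" in the infinite-dimensional $V$ is the levelwise one used above. If instead it means "spans $V$ and contains no line," then spanning follows from the levelwise statement, and the absence of lines follows from the upper bound by $\varpi_E^{-N}\Lambda_0$.
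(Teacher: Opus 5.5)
Your proposal is correct and follows essentially the same route as the paper. Necessity is intersection with $V_K$. Sufficiency glues $\Lambda=\bigcup_K\Lambda_K$, which is equivalent to testing $v$ at any level containing it. The last clause uses finiteness of the sets of lattices sandwiched by \eqref{eq:lattice-uniform} together with the compactness/finite-intersection argument of Theorem~\ref{thm:finite-diagrams}. Your extra checks (clopen two-coordinate constraints, the levelwise meaning of ``full lattice'', and commensurability through the union) spell out details the paper leaves implicit.
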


\begin{proof}
A global lattice gives the displayed family by intersection.  In the
other direction, declare that $v\in\Lambda$ if
$v\in\Lambda_K$ for one (equivalently every) $K$ with $v\in V_K$.
Equation \eqref{eq:lattice-restriction} makes this well defined and
shows that $\Lambda$ is an $\Ocoeff$-module; the uniform inclusions
make it a full lattice commensurable with $\Lambda_0$.
Equation \eqref{eq:lattice-transport} makes it $G$-stable.  For the
last assertion, note that the set of lattices between the two bounds
in \eqref{eq:lattice-uniform} is finite at each $K$, because the
residue field is finite.  The same finite-intersection argument as in
Theorem \ref{thm:finite-diagrams} applies.
\end{proof}

The gauge of $\Lambda$ in Proposition \ref{prop:lattice-limit} is a
$G$-invariant norm.  The proposition also identifies the exact gap in
a Hecke-only argument: a lattice stable under a double-coset average
must be promoted to lattices satisfying restriction, individual
transport, and one uniform commensurability exponent $N$.

\subsection{Transport operators in the bounded projective limit}
\label{subsec:inverse-limit-transport}

The projective-limit description is also useful for estimating individual
group elements.  Let
\[
 \alpha=(\alpha_K)_K\in\Norm_{\alpha_0}(V).
\]
For \(g\in G\) and a compact open subgroup \(K\), the map \(g\) induces a
linear isomorphism
\[
 g:V_{g^{-1}Kg}\longrightarrow V_K.
\]
Define its finite-level transport norm by
\begin{equation}\label{eq:finite-level-transport-norm}
 \|g\|_{\alpha,K}
 =
 \sup_{0\neq v\in V_{g^{-1}Kg}}
 \frac{\alpha_K(gv)}
 {\alpha_{g^{-1}Kg}(v)}.
\end{equation}

\begin{proposition}[Exact inverse-limit transport formula]
\label{prop:inverse-limit-transport}
For every \(g\in G\),
\begin{equation}\label{eq:global-operator-as-sup-levels}
 \|g\|_\alpha
 :=
 \sup_{0\neq v\in V}\frac{\alpha(gv)}{\alpha(v)}
 =
 \sup_K\|g\|_{\alpha,K}.
\end{equation}
Consequently,
\begin{equation}\label{eq:dinf-inverse-limit-transport}
 \dinf(\alpha,g\alpha)
 =
 \sup_K
 \max\left\{
 \log\|g\|_{\alpha,K},
 \log\|g^{-1}\|_{\alpha,K}
 \right\}.
\end{equation}
More generally,
\begin{equation}\label{eq:dinf-inverse-limit-powers}
 \dinf(\alpha,g^m\alpha)
 =
 \sup_K
 \max\left\{
 \log\|g^m\|_{\alpha,K},
 \log\|g^{-m}\|_{\alpha,K}
 \right\}.
\end{equation}
\end{proposition}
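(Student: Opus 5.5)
The plan is to prove \eqref{eq:global-operator-as-sup-levels} by a change of variables using the exhaustion \eqref{eq:locally-algebraic-union}, and then obtain the distance formulas from \eqref{eq:distance}. Throughout, $\alpha_K=\alpha|_{V_K}$ as in Proposition \ref{prop:all-norms-limit}.

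For \eqref{eq:global-operator-as-sup-levels}, one inequality is immediate. If $v\in V_{g^{-1}Kg}$ is nonzero, then $gv\in V_K$ because $gV_{g^{-1}Kg}=V_K$. Hence $\alpha_K(gv)=\alpha(gv)$ and $\alpha_{g^{-1}Kg}(v)=\alpha(v)$, so every ratio in \eqref{eq:finite-level-transport-norm} is a ratio occurring in the definition of $\|g\|_\alpha$. This gives $\sup_K\|g\|_{\alpha,K}\le\|g\|_\alpha$.

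For the reverse inequality, take any nonzero $v\in V$. By \eqref{eq:locally-algebraic-union} there is a compact open $L$ with $v\in V_L$. Put $K=gLg^{-1}$, so that $g^{-1}Kg=L$. Then the ratio $\alpha(gv)/\alpha(v)$ appears in $\|g\|_{\alpha,K}$. Taking suprema gives equality; both sides may be $+\infty$, and this is allowed. The only point to be careful about is that $K$ ranges over all compact open subgroups, so conjugation $K\mapsto g^{-1}Kg$ is a bijection of $\mathcal K(G)$ and no level is missed.

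For \eqref{eq:dinf-inverse-limit-transport}, I will use $(g\alpha)(v)=\alpha(g^{-1}v)$, which matches \eqref{eq:action-on-limit}. From \eqref{eq:distance},
\[
\dinf(\alpha,g\alpha)=\max\Bigl\{\sup_{v\neq0}\log\frac{\alpha(g^{-1}v)}{\alpha(v)},\ \sup_{v\neq0}\log\frac{\alpha(v)}{\alpha(g^{-1}v)}\Bigr\}.
\]
The first term is $\log\|g^{-1}\|_\alpha$. In the second term, substitute $v=gw$; this is a bijection of $V\setminus\{0\}$, and the term becomes $\log\|g\|_\alpha$.

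Applying \eqref{eq:global-operator-as-sup-levels} to $g$ and to $g^{-1}$ then gives
\[
\dinf(\alpha,g\alpha)=\max\Bigl\{\sup_K\log\|g\|_{\alpha,K},\ \sup_K\log\|g^{-1}\|_{\alpha,K}\Bigr\}.
\]
Since a max of suprema equals the supremum of the maxes, this is \eqref{eq:dinf-inverse-limit-transport}. Here $\|g^{-1}\|_{\alpha,K}$ is read through \eqref{eq:finite-level-transport-norm} with $g^{-1}$ in place of $g$, so it involves the pair $V_{gKg}$, $V_K$, and the argument above applies to any group element. Finally, \eqref{eq:dinf-inverse-limit-powers} is the same statement with $g$ replaced by $g^m$.

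I do not expect a real obstacle. The step needing the most care is the reindexing over $K$ and keeping track of which side $g$ versus $g^{-1}$ appears on.
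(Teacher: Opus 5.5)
Your proof is correct and follows the paper's argument: one inequality comes from restriction, the reverse from choosing $L$ with $v\in V_L$ and setting $K=gLg^{-1}$, and the distance formula comes from identifying $\dinf(\alpha,g\alpha)$ with $\max\{\log\|g\|_\alpha,\log\|g^{-1}\|_\alpha\}$, which you verify more explicitly than the paper via the substitution $v=gw$. One small slip: $\|g^{-1}\|_{\alpha,K}$ concerns the map $g^{-1}\colon V_{gKg^{-1}}\to V_K$, not the pair $V_{gKg}$, $V_K$.
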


\begin{proof}
If \(v\in V_L\), take \(K=gLg^{-1}\).  Then
\(v\in V_{g^{-1}Kg}\), so every vector occurring in the global operator norm
occurs in one of the finite-level norms.  The reverse inequality is
immediate because the finite-level norms are restrictions of \(\alpha\).
This proves \eqref{eq:global-operator-as-sup-levels}.

For the action convention
\[
 (g\alpha)(v)=\alpha(g^{-1}v),
\]
the comparison constant between \(\alpha\) and \(g\alpha\) is
\[
 \max\{\|g\|_\alpha,\|g^{-1}\|_\alpha\}.
\]
Combining this with \eqref{eq:global-operator-as-sup-levels} gives
\eqref{eq:dinf-inverse-limit-transport}; the same argument applied to
\(g^m\) gives \eqref{eq:dinf-inverse-limit-powers}.
\end{proof}

\begin{corollary}[Uniform finite-level Cartan criterion]
\label{cor:uniform-finite-level-Cartan}
Let \(S\subseteq G\).  A norm
\(\alpha=(\alpha_K)_K\in\Norm_{\alpha_0}(V)\) has bounded \(S\)-orbit if and
only if
\[
 \sup_{g\in S}\sup_K
 \max\left\{
 \log\|g\|_{\alpha,K},
 \log\|g^{-1}\|_{\alpha,K}
 \right\}
 <\infty.
\]
In particular, for a Cartan generator \(a_j\), uniform boundedness of
\(\{a_j^m\alpha:m\in\mathbb Z\}\) is exactly a bound which is uniform
simultaneously in \(m\) and \(K\).
\end{corollary}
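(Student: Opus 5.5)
The corollary follows almost directly from Proposition \ref{prop:inverse-limit-transport}, once ``bounded $S$-orbit'' is pinned down. We take it to mean that $\sup_{g\in S}\dinf(\alpha,g\alpha)<\infty$. This is the natural notion: by the triangle inequality it is equivalent to the orbit $S\alpha$ having finite diameter. It is also equivalent to $S\alpha$ lying in a single ball of $\Norm_{\alpha_0}(V)$, because $\dinf(\alpha,\alpha_0)<\infty$ and one can move the basepoint from $\alpha$ to $\alpha_0$ at bounded cost.

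\textbf{Step 1: the two-sided equivalence.} For each $g\in S$, formula \eqref{eq:dinf-inverse-limit-transport} gives the exact identity
\[
 \dinf(\alpha,g\alpha)=\sup_K\max\{\log\|g\|_{\alpha,K},\log\|g^{-1}\|_{\alpha,K}\}.
\]
Taking $\sup_{g\in S}$ of both sides shows that the displayed quantity in the corollary equals $\sup_{g\in S}\dinf(\alpha,g\alpha)$. This gives the equivalence in both directions, with no loss of constants.

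\textbf{Step 2: the finite-level transport norms are well defined.} Here $\|g^{-1}\|_{\alpha,K}$ is interpreted with $g^{-1}$ in place of $g$ in \eqref{eq:finite-level-transport-norm}. That is, it measures the map $V_{gKg^{-1}}\to V_K$. This is legitimate because $g^{-1}V_{gKg^{-1}}=V_K$ by the same diagonal-action argument used to define $\tau_{g,K}$.

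\textbf{Step 3: the Cartan special case.} Take $S=\{a_j^m:m\in\mathbb Z\}$. Since $S$ is closed under inversion, the maximum of $\log\|g\|$ and $\log\|g^{-1}\|$ over $g\in S$ is just $\sup_{m\in\mathbb Z}\log\|a_j^m\|_{\alpha,K}$. Alternatively one can read this off directly from \eqref{eq:dinf-inverse-limit-powers}. In either form the criterion becomes a bound uniform in both $m$ and $K$.

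\textbf{Where care is needed.} The only point that requires attention is the one noted at the start: checking that bounded orbit in the sense of diameter and in the sense of distance from $\alpha$ agree. This is a one-line consequence of the triangle inequality. No compactness or arithmetic input is needed.
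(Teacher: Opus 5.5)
Your proposal is correct and follows the paper's route: the corollary is obtained by taking $\sup_{g\in S}$ of the exact identity \eqref{eq:dinf-inverse-limit-transport}, and the Cartan case is the specialisation $S=\{a_j^m:m\in\mathbb Z\}$ (equivalently \eqref{eq:dinf-inverse-limit-powers}). One small correction to your aside: for an arbitrary subset $S$, the condition $\sup_{g\in S}\dinf(\alpha,g\alpha)<\infty$ is equivalent to $S\alpha$ having finite diameter only when some point of $S\alpha$ lies at finite distance from $\alpha$ (for instance $S=\{g\}$ with $\dinf(\alpha,g\alpha)=\infty$ gives diameter $0$); this holds whenever $1\in S$, as in the Cartan case, and your formulation ``$S\alpha$ lies in a single ball of $\Norm_{\alpha_0}(V)$'' is the one that is correct in general.
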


This changes the role of a Hamel basis.  A single global Hamel basis is
convenient for displaying one global matrix, but it is not necessary for
proving boundedness.  It is enough to construct compatible norms or lattices
on the finite-dimensional spaces \(V_K\) and to obtain one transport bound
uniform in \(K\).

\begin{lemma}[Moving-level transport chains]
\label{lem:moving-level-transport-chain}
Fix \(g\in G\), a compact open subgroup \(K\), and a full lattice
\(\Lambda_K\subseteq V_K\).  Put
\[
 K_r=g^rKg^{-r},
 \qquad
 \Lambda_{K_r}=g^r\Lambda_K.
\]
Then
\[
 g:\Lambda_{K_r}\xrightarrow{\ \sim\ }\Lambda_{K_{r+1}}
\]
for every \(r\in\mathbb Z\).  Thus every finite chain of conjugate levels
has zero transport loss when the lattices are allowed to move with the
levels.

The obstruction to a global lattice lies instead in compatibility on common
refinements and in one commensurability exponent uniform as the finite chain
grows.
\end{lemma}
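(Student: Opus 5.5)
The plan is to check three things in turn: that the definition of $\Lambda_{K_r}$ is legitimate, that $g$ is an isomorphism between consecutive lattices, and finally to explain the closing remark about the obstruction.

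\textbf{Well-definedness.} We need $\Lambda_{K_r}$ to be a full lattice in $V_{K_r}$. From the transport discussion, $gV_{g^{-1}K'g}=V_{K'}$ for every compact open $K'$. Applying this with $K'=K_r=g^rKg^{-r}$ gives $g^{-r}K_rg^r=K$, so iterating yields $g^rV_K=V_{K_r}$ for all $r\in\mathbb Z$, negative $r$ included. Since $g^r$ acts $E$-linearly and bijectively, it carries the full $\Ocoeff$-lattice $\Lambda_K$ onto a full $\Ocoeff$-lattice $g^r\Lambda_K$ of $V_{K_r}$. Thus $\Lambda_{K_r}$ is a full lattice in $V_{K_r}$.

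\textbf{Transport.} Next, $g$ maps $V_{K_r}$ isomorphically onto $V_{K_{r+1}}$, because $g^{-1}K_{r+1}g=K_r$. This is the isomorphism $g:V_{g^{-1}K_{r+1}g}\to V_{K_{r+1}}$ used in the transport maps $\tau_{g,K_{r+1}}$. On lattices we then compute
\[
 g\Lambda_{K_r}=g\,g^r\Lambda_K=g^{r+1}\Lambda_K=\Lambda_{K_{r+1}}.
\]
So $g$ is an $\Ocoeff$-isomorphism $\Lambda_{K_r}\to\Lambda_{K_{r+1}}$, with inverse $g^{-1}$. In the language of Proposition~\ref{prop:inverse-limit-transport}, let $\alpha_{K_r}$ denote the gauge of $\Lambda_{K_r}$. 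Then the finite-level transport norms of $g$ and $g^{-1}$ between consecutive levels both equal $1$. Any finite chain of transports along the levels $K_r$ therefore has zero loss, which is equation \eqref{eq:lattice-transport} along the chain.

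\textbf{The closing remark.} This part is interpretive rather than a formal claim, so the plan is only to point to what the construction leaves open. The lattices $\Lambda_{K_r}$ are defined by transport alone. Nothing forces the restriction equation \eqref{eq:lattice-restriction} when two levels $K_r$ and $K_s$ are both contained in a common $L$. Nor does anything force the uniform inclusion \eqref{eq:lattice-uniform} with a single $N$ as $|r|$ grows, since $g^r\Lambda_K$ may drift arbitrarily far from $\Lambda_{0,K_r}$. Proposition~\ref{prop:lattice-limit} shows these are exactly the remaining conditions for a $G$-stable lattice. No step is a genuine obstacle. The only point needing care is the bookkeeping of conjugations, namely checking $g^{-1}K_{r+1}g=K_r$ rather than the reverse.
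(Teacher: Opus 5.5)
Your proposal is correct and takes the same route as the paper, which simply calls the first assertion tautological; your checks that $g^rV_K=V_{K_r}$ and $g\,(g^r\Lambda_K)=g^{r+1}\Lambda_K$ spell that tautology out, and your reading of the closing remark through the restriction and uniform-exponent conditions of Proposition~\ref{prop:lattice-limit} matches the paper's one-line justification. One small caveat, which the paper's statement shares: your well-definedness paragraph does not handle repeated levels (for instance $g\in K$ or $g$ central gives $K_r=K$ for all $r$, while $g^r\Lambda_K$ may vary with $r$), so the chain has to be indexed by $r$, and calling it an instance of \eqref{eq:lattice-transport} is accurate only when $K_r\mapsto g^r\Lambda_K$ is well defined as a function of the subgroup.
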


\begin{proof}
The first assertion is tautological.  The second is exactly the additional
content of the restriction and uniformity conditions in the bounded
projective-limit lattice criterion.
\end{proof}

\section{Metric geometry}\label{sec:metric}

\begin{proposition}\label{prop:complete}
The metric space $(\Norm_{\alpha_0}(V),\dinf)$ is complete.
\end{proposition}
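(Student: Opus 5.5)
The plan is the standard argument for completeness of sup-metrics: work with logarithms of norms and take pointwise limits.

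\textbf{Pointwise limit.} Let $(\alpha_n)$ be a Cauchy sequence in $\Norm_{\alpha_0}(V)$. By \eqref{eq:comparison}, for every $\varepsilon>0$ there is $N$ such that for $m,n\ge N$ and every $v\neq0$,
\[
 \left|\log\alpha_n(v)-\log\alpha_m(v)\right|\le\varepsilon .
\]
Hence for each fixed $v\neq0$ the real sequence $\log\alpha_n(v)$ is Cauchy. Put $\alpha(v)=\lim_n\alpha_n(v)$ and $\alpha(0)=0$.

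\textbf{$\alpha$ is a norm.} Homogeneity and the ultrametric inequality pass to pointwise limits, since $\max$ is continuous. Nondegeneracy is the one point needing care, and the uniform estimate supplies it. Fix $N$ with $\dinf(\alpha_n,\alpha_N)\le1$ for all $n\ge N$. Then $\alpha_n(v)\ge e^{-1}\alpha_N(v)$ for all such $n$, so $\alpha(v)\ge e^{-1}\alpha_N(v)>0$ for $v\neq0$.

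\textbf{Convergence and membership in the component.} Letting $m\to\infty$ in the displayed inequality gives $\left|\log\alpha_n(v)-\log\alpha(v)\right|\le\varepsilon$ for all $n\ge N$, uniformly in $v$. Thus $\dinf(\alpha_n,\alpha)\le\varepsilon$, and so $\alpha_n\to\alpha$. The triangle inequality for the extended metric then gives
\[
 \dinf(\alpha,\alpha_0)\le\dinf(\alpha,\alpha_N)+\dinf(\alpha_N,\alpha_0)<\infty,
\]
so $\alpha\in\Norm_{\alpha_0}(V)$.

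\textbf{Alternative via the bounded projective limit.} One can also argue through \eqref{eq:bounded-projective-limit}, \eqref{eq:inverse-limit-metric} and Proposition \ref{prop:all-norms-limit}. The coordinates $x_{n,K}$ are Cauchy in each finite-dimensional building $\Building_K^{\mathrm e}$, which is complete. The limits $x_K$ are compatible because each $r_{L,K}$ is $1$-Lipschitz, and the uniform Cauchy bound in $K$ again gives $d_{\mathrm{bd}}$-convergence. The direct argument above avoids having to cite completeness of the buildings, so I will present that one.

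There is no genuine obstacle here. The only point to watch is nondegeneracy of the limit, which the uniform lower bound handles; and this is exactly where the sup metric, rather than coordinatewise convergence, is used.
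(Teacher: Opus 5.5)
Your proof is correct and follows essentially the same route as the paper. You take the pointwise limit of the Cauchy sequences $\log\alpha_n(v)$, pass homogeneity and the ultrametric inequality to the limit, and let $m\to\infty$ to get the uniform estimate $e^{-\varepsilon}\alpha_n\le\alpha\le e^{\varepsilon}\alpha_n$, which yields nondegeneracy and $\dinf(\alpha_n,\alpha)\le\varepsilon$; your triangle-inequality check that $\alpha\in\Norm_{\alpha_0}(V)$ spells out a step the paper leaves implicit.
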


\begin{proof}
Let $(\alpha_n)$ be Cauchy.  For every nonzero $v$, the sequence
$\log\alpha_n(v)$ is Cauchy in $\R$; define
$\alpha(v)=\lim_n\alpha_n(v)$ and set $\alpha(0)=0$.  Homogeneity and
the ultrametric inequality pass to the limit.

For $\varepsilon>0$, choose $N$ such that
$\dinf(\alpha_n,\alpha_m)\leq\varepsilon$ for $m,n\geq N$.  Letting
$m$ tend to infinity in \eqref{eq:comparison}, with $\alpha = \alpha_m$ and $\beta = \alpha_n$, gives
\[
  e^{-\varepsilon}\alpha_n\leq\alpha\leq
  e^{\varepsilon}\alpha_n \qquad(n\geq N).
\]
In particular $\alpha$ is nondegenerate and
$\dinf(\alpha_n,\alpha)\leq\varepsilon$.  Hence $\alpha_n\to\alpha$ in
$\Norm_{\alpha_0}(V)$.
\end{proof}

\begin{proposition}[Bounded suprema]\label{prop:supremum}
Let $\mathcal A\subseteq\Norm_{\alpha_0}(V)$ and suppose that one
constant $R$ satisfies
\[
  \dinf(\gamma,\alpha_0)\leq R
  \qquad(\gamma\in\mathcal A).
\]
Then
\[
  \beta(v)=\sup_{\gamma\in\mathcal A}\gamma(v)
\]
is a norm in $\Norm_{\alpha_0}(V)$ and
$\dinf(\beta,\alpha_0)\leq R$.
\end{proposition}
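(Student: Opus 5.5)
The plan is to verify the norm axioms for $\beta$ pointwise and then read off the distance bound from the comparison inequality \eqref{eq:comparison}. If $\mathcal A$ is empty the statement is vacuous or should be read with $\mathcal A\neq\emptyset$, so I assume $\mathcal A$ is nonempty throughout.

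\emph{Step 1 (finiteness and two-sided bounds).} For each $\gamma\in\mathcal A$, the hypothesis $\dinf(\gamma,\alpha_0)\leq R$ gives, via \eqref{eq:comparison},
\[
e^{-R}\alpha_0(v)\leq\gamma(v)\leq e^R\alpha_0(v)\qquad(v\in V).
\]
Taking the supremum over $\gamma$ shows $\beta(v)\leq e^R\alpha_0(v)<\infty$, so $\beta$ is real-valued. Picking any single $\gamma\in\mathcal A$ gives $\beta(v)\geq\gamma(v)\geq e^{-R}\alpha_0(v)$.

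\emph{Step 2 (seminorm axioms).} Homogeneity follows because multiplying every $\gamma(v)$ by the fixed constant $|\lambda|$ commutes with the supremum. For the ultrametric inequality, each $\gamma$ satisfies
\[
\gamma(v+w)\leq\max\{\gamma(v),\gamma(w)\}\leq\max\{\beta(v),\beta(w)\}.
\]
Taking the supremum over $\gamma$ on the left gives $\beta(v+w)\leq\max\{\beta(v),\beta(w)\}$.

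\emph{Step 3 (nondegeneracy and the distance bound).} The lower bound $\beta\geq e^{-R}\alpha_0$ from Step 1, together with nondegeneracy of $\alpha_0$, shows that $\beta$ is a norm. The two-sided bound $e^{-R}\alpha_0\leq\beta\leq e^R\alpha_0$ is exactly \eqref{eq:comparison} with constant $R$. Hence $\dinf(\beta,\alpha_0)\leq R$, and in particular $\beta\in\Norm_{\alpha_0}(V)$.

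There is no real obstacle. The only point needing care is that the lower bound uses a single member of $\mathcal A$, which is where nonemptiness is required, and that finiteness uses the uniformity of $R$ across $\mathcal A$. Without that uniform constant the supremum could be infinite.
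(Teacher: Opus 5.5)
Your proof is correct and follows the paper's argument essentially step for step: the uniform comparison with $\alpha_0$ gives finiteness and the two-sided bound $e^{-R}\alpha_0\leq\beta\leq e^{R}\alpha_0$, and the ultrametric inequality passes to the pointwise supremum. You also point out that $\mathcal A$ must be nonempty, which is needed for the lower bound and for nondegeneracy; the paper leaves this implicit, and the point is correct.
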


\begin{proof}
The uniform comparison with $\alpha_0$ makes the supremum finite and
nonzero away from the origin.  Homogeneity is immediate, and
\begin{align*}
 \beta(v+w)
 &\leq\sup_{\gamma\in\mathcal A}
       \max\{\gamma(v),\gamma(w)\}\\
 &\leq\max\{\beta(v),\beta(w)\}.
\end{align*}
The same comparison proves the asserted distance bound.
\end{proof}

\subsection{Hyperconvexity and exact circumcentres}
\label{subsec:hyperconvex-norm-space}

The order structure on norms gives more geometry than completeness.

\begin{theorem}[Hyperconvexity of a finite-distance component]
\label{thm:hyperconvex-component}
The metric space
\[
 \bigl(\Norm_{\alpha_0}(V),\dinf\bigr)
\]
is hyperconvex.  Explicitly, let
\[
 \{B(\alpha_i,r_i)\}_{i\in I}
\]
be any family of closed balls satisfying
\[
 \dinf(\alpha_i,\alpha_j)\leq r_i+r_j
 \qquad(i,j\in I).
\]
Then
\begin{equation}\label{eq:hyperconvex-center}
 \beta(v)
 =
 \sup_{i\in I}e^{-r_i}\alpha_i(v)
\end{equation}
belongs to every ball \(B(\alpha_i,r_i)\).

Consequently the component is an injective metric space, by the
Aronszajn--Panitchpakdi characterization of injective metric spaces
\cite{AronszajnPanitchpakdi1956}.
\end{theorem}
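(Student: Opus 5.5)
The plan is to verify directly that the explicit function \(\beta\) of \eqref{eq:hyperconvex-center} is a norm in \(\Norm_{\alpha_0}(V)\) and that \(\dinf(\beta,\alpha_i)\leq r_i\) for every \(i\). Hyperconvexity then holds by definition, and injectivity follows from the Aronszajn--Panitchpakdi theorem quoted in the statement. We may assume \(I\neq\emptyset\); for an empty family any point of the component will do.

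First I would show that \(\beta\) is a norm in the component, by applying Proposition \ref{prop:supremum} to the family \(\mathcal A=\{e^{-r_i}\alpha_i\}_{i\in I}\). Each \(e^{-r_i}\alpha_i\) is a norm, and it lies within distance \(r_i\) of \(\alpha_i\). The obstacle is that Proposition \ref{prop:supremum} needs one uniform radius about \(\alpha_0\), while the \(r_i\) may be unbounded. I would therefore fix one index \(i_0\) and use \(e^{-r_{i_0}}\alpha_{i_0}\) as reference.

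For the upper bound, the hypothesis \(\dinf(\alpha_i,\alpha_{i_0})\leq r_i+r_{i_0}\) gives
\[
 e^{-r_i}\alpha_i\leq e^{r_{i_0}}\alpha_{i_0}.
\]
Hence the supremum is finite, and \(\beta\leq e^{r_{i_0}}\alpha_{i_0}\).

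For the lower bound, \(\beta\geq e^{-r_{i_0}}\alpha_{i_0}\) is immediate from the definition.

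So \(\beta\) is sandwiched between two multiples of \(\alpha_{i_0}\), which lies in \(\Norm_{\alpha_0}(V)\). Nondegeneracy and finiteness of \(\dinf(\beta,\alpha_0)\) follow. Homogeneity and the ultrametric inequality pass to the supremum exactly as in the proof of Proposition \ref{prop:supremum}, so I would redo that two-line computation rather than invoke the proposition verbatim.

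Second, the sandwich argument above holds for every index, not just \(i_0\). For each \(j\) it gives the two inequalities
\[
 e^{-r_j}\alpha_j\leq\beta\leq e^{r_j}\alpha_j,
\]
the first because \(\beta\) is a supremum containing the term \(e^{-r_j}\alpha_j\), the second from \(e^{-r_i}\alpha_i\leq e^{r_j}\alpha_j\) for all \(i\). By \eqref{eq:comparison} these are exactly \(\dinf(\beta,\alpha_j)\leq r_j\). So \(\beta\) lies in every ball.

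The only real subtlety is the first step: no uniform bound on the radii is available, and the right reference point is one of the centres rather than \(\alpha_0\). Once that is seen, everything reduces to the pairwise inequality \(\dinf(\alpha_i,\alpha_j)\leq r_i+r_j\) rewritten through \eqref{eq:comparison}. Finally, hyperconvex metric spaces are exactly the injective ones by \cite{AronszajnPanitchpakdi1956}, and this gives the last assertion.
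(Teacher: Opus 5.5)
Your proof is correct and is essentially the paper's own argument. It fixes a centre \(\alpha_{i_0}\) to bound the supremum above and below, which gives finiteness and nondegeneracy. It then uses the pairwise condition \(\dinf(\alpha_i,\alpha_j)\leq r_i+r_j\) to get \(e^{-r_j}\alpha_j\leq\beta\leq e^{r_j}\alpha_j\) for every \(j\). Your handling of the empty family, and your observation that Proposition~\ref{prop:supremum} cannot be invoked verbatim because the radii may be unbounded, are small points of care that the paper leaves implicit.
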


\begin{proof}
Fix \(i_0\in I\).  For every \(i\),
\[
 \alpha_i\leq e^{r_i+r_{i_0}}\alpha_{i_0},
\]
hence
\[
 e^{-r_i}\alpha_i\leq e^{r_{i_0}}\alpha_{i_0}.
\]
Thus the supremum in \eqref{eq:hyperconvex-center} is finite.  It is a
nondegenerate ultrametric norm because a pointwise supremum of ultrametric
seminorms is ultrametric, and it dominates the nondegenerate norm
\(e^{-r_{i_0}}\alpha_{i_0}\).

For fixed \(i\),
\[
 \beta\geq e^{-r_i}\alpha_i.
\]
Moreover, for every \(j\),
\[
 \alpha_j\leq e^{r_i+r_j}\alpha_i,
\]
so
\[
 e^{-r_j}\alpha_j\leq e^{r_i}\alpha_i.
\]
Taking the supremum over \(j\) gives
\[
 e^{-r_i}\alpha_i\leq\beta\leq e^{r_i}\alpha_i.
\]
Hence \(\dinf(\beta,\alpha_i)\leq r_i\).
\end{proof}

In finite dimension, Haettel proves the corresponding Helly property for
balls in the Goldman--Iwahori metric and relates it to injective
\(\ell^\infty\)-metrics on Bruhat--Tits buildings
\cite{Haettel2022}.  The preceding proof shows directly that the
finite-distance component used here retains the full hyperconvex
ball-intersection property even in the present infinite-dimensional setting.

\begin{theorem}[Exact circumradius of a bounded family]
\label{thm:exact-circumradius}
Let \(\mathcal A\subseteq\Norm_{\alpha_0}(V)\) be nonempty and bounded, and
put
\[
 D=\operatorname{diam}(\mathcal A).
\]
Then
\begin{equation}\label{eq:explicit-circumcenter}
 c_{\mathcal A}(v)
 =
 e^{-D/2}
 \sup_{\gamma\in\mathcal A}\gamma(v)
\end{equation}
satisfies
\[
 \sup_{\gamma\in\mathcal A}
 \dinf(c_{\mathcal A},\gamma)
 \leq\frac D2.
\]
Moreover \(D/2\) is the smallest possible radius of a ball containing
\(\mathcal A\).  Hence every bounded family has circumradius exactly half
its diameter.
\end{theorem}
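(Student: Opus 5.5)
The plan is to obtain the statement as a special case of Theorem~\ref{thm:hyperconvex-component}, with a separate elementary argument for the lower bound. Take the family of balls \(\{B(\gamma,D/2)\}_{\gamma\in\mathcal A}\). Every pair satisfies \(\dinf(\gamma,\gamma')\leq D=D/2+D/2\) by definition of the diameter, so the hypothesis of Theorem~\ref{thm:hyperconvex-component} holds with all radii \(r_\gamma=D/2\). The centre \eqref{eq:hyperconvex-center} produced by that theorem is
\[
 \beta(v)=\sup_{\gamma\in\mathcal A}e^{-D/2}\gamma(v)=c_{\mathcal A}(v),
\]
which is exactly \eqref{eq:explicit-circumcenter}. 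Hence \(c_{\mathcal A}\) lies in every ball \(B(\gamma,D/2)\), that is, \(\sup_{\gamma}\dinf(c_{\mathcal A},\gamma)\leq D/2\).

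One point needs checking: that \(c_{\mathcal A}\) lies in \(\Norm_{\alpha_0}(V)\) and not merely in the space of all norms. Boundedness of \(\mathcal A\) gives a uniform \(R\) with \(\dinf(\gamma,\alpha_0)\leq R\), so Proposition~\ref{prop:supremum} shows that \(\sup_\gamma\gamma\) is a norm in the component. Scaling by \(e^{-D/2}\) moves it by distance \(D/2\), so \(c_{\mathcal A}\) stays in the component. The proof of Theorem~\ref{thm:hyperconvex-component} already gives this as well, since \(c_{\mathcal A}\) lies within \(D/2\) of some \(\gamma\).

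For minimality, suppose some \(\beta\) and \(r\) satisfy \(\dinf(\beta,\gamma)\leq r\) for all \(\gamma\in\mathcal A\). The triangle inequality gives \(\dinf(\gamma,\gamma')\leq 2r\) for every pair, and taking the supremum yields \(D\leq 2r\). So no ball of radius smaller than \(D/2\) contains \(\mathcal A\). Combined with the first part, this shows the circumradius is exactly \(D/2\); when \(D=0\) the family is a single point and everything is trivial.

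No step is a real obstacle. The only care needed is to confirm that the hypothesis of Theorem~\ref{thm:hyperconvex-component} uses the closed inequality \(\leq\), which the diameter (a supremum) satisfies, and that the balls in that theorem are closed balls.
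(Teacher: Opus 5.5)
Your proposal is correct and matches the paper's proof: the paper verifies \(e^{-D/2}\gamma\leq c_{\mathcal A}\leq e^{D/2}\gamma\) directly from \(\eta\leq e^{D}\gamma\), which is exactly the argument of Theorem~\ref{thm:hyperconvex-component} specialised to equal radii \(D/2\). The paper gets the lower bound \(D\leq 2R\) from the triangle inequality, just as you do.
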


\begin{proof}
Fix \(\gamma\in\mathcal A\).  Since
\[
 \eta\leq e^D\gamma
 \qquad(\eta\in\mathcal A),
\]
we have
\[
 e^{-D/2}\gamma
 \leq
 c_{\mathcal A}
 \leq
 e^{D/2}\gamma.
\]
Thus the displayed radius is at most \(D/2\).

Conversely, if \(\mathcal A\subseteq B(\beta,R)\), then for
\(\gamma,\eta\in\mathcal A\),
\[
 \dinf(\gamma,\eta)
 \leq
 \dinf(\gamma,\beta)+\dinf(\beta,\eta)
 \leq2R.
\]
Hence \(D\leq2R\).
\end{proof}

\subsection{Stable displacement}
\label{subsec:stable-displacement}

Let \(T\) be an isometry of a metric space.  Subadditivity gives
\begin{equation}\label{eq:stable-translation-length}
 \tau(T)
 =
 \lim_{m\to\infty}\frac1m d(x,T^mx)
 =
 \inf_{m\geq1}\frac1m d(x,T^mx),
\end{equation}
and the value is independent of \(x\).

\begin{proposition}[Change of base norm]
\label{prop:stable-change-base}
For \(\alpha,\beta\in\Norm_{\alpha_0}(V)\), \(g\in G\), and \(m\in\mathbb Z\),
\begin{equation}\label{eq:change-base-displacement}
 \dinf(\alpha,g^m\alpha)
 \leq
 2\dinf(\alpha,\beta)
 +
 \dinf(\beta,g^m\beta).
\end{equation}
Thus every estimate at an adapted norm \(\beta\) transfers to the reference
norm with an additive error independent of \(m\).
\end{proposition}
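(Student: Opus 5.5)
The inequality \eqref{eq:change-base-displacement} is a two-fold application of the triangle inequality for $\dinf$. The only extra input is that each $g\in G$ acts on $\Norm_{\alpha_0}(V)$ by an isometry for the action $(g\alpha)(v)=\alpha(g^{-1}v)$.

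First I check the isometry property. For norms $\alpha,\beta$ and $h\in G$, substituting $w=h^{-1}v$ in \eqref{eq:distance} runs $w$ over all of $V\setminus\{0\}$ because $h$ is a linear bijection of $V$. This gives
\[
 \dinf(h\alpha,h\beta)=\sup_{w\neq 0}\left|\log\frac{\alpha(w)}{\beta(w)}\right|=\dinf(\alpha,\beta).
\]
I apply this with $h=g^m$ for any $m\in\mathbb Z$. This also shows that $g^m\beta$ lies in the same finite-distance component whenever the right-hand side of \eqref{eq:change-base-displacement} is finite. If that right-hand side is infinite there is nothing to prove, since $\dinf$ is an extended metric and the triangle inequality holds in $[0,+\infty]$.

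Second, I chain the triangle inequality through $\beta$ and $g^m\beta$:
\[
 \dinf(\alpha,g^m\alpha)\leq \dinf(\alpha,\beta)+\dinf(\beta,g^m\beta)+\dinf(g^m\beta,g^m\alpha).
\]
By the isometry step, the last term equals $\dinf(\beta,\alpha)$, and this yields \eqref{eq:change-base-displacement}.

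The transfer statement is then immediate, because the error term $2\dinf(\alpha,\beta)$ does not depend on $m$. As a consequence, dividing by $m$ and letting $m\to\infty$ shows that the stable displacement \eqref{eq:stable-translation-length} is the same whether it is computed at $\alpha$ or at $\beta$.

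There is no real obstacle. The one point to state explicitly is that $g^m$ is an isometry even though $V$ is infinite dimensional. This holds because the supremum in \eqref{eq:distance} is taken over all nonzero vectors, and $g^m$ permutes them bijectively.
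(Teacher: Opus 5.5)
Your proposal is correct and follows the paper's own proof. Both chain the triangle inequality through \(\beta\) and \(g^m\beta\), then use that \(g^m\) acts isometrically to identify \(\dinf(g^m\beta,g^m\alpha)\) with \(\dinf(\alpha,\beta)\). Your explicit check of the isometry property and of the extended-metric case fills in details the paper leaves implicit.
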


\begin{proof}
Apply the triangle inequality and the fact that the \(G\)-action is
isometric:
\[
 \dinf(\alpha,g^m\alpha)
 \leq
 \dinf(\alpha,\beta)
 +
 \dinf(\beta,g^m\beta)
 +
 \dinf(g^m\beta,g^m\alpha).
\]
\end{proof}

On a finite-dimensional \(E\)-vector space the stable length is spectral.

\begin{proposition}[Finite-dimensional spectral displacement]
\label{prop:finite-spectral-displacement}
Let \(W\) be finite-dimensional, \(T\in\GL(W)\), and let \(\alpha\) be any
norm on \(W\).  If \(r(T)\) denotes the spectral radius, then
\begin{equation}\label{eq:finite-spectral-length}
 \tau(T)
 =
 \max\{\log r(T),\log r(T^{-1})\}.
\end{equation}
After extending scalars to an algebraic closure, if
\(\lambda_1,\ldots,\lambda_d\) are the eigenvalues of \(T\), then
\begin{equation}\label{eq:finite-spectral-length-eigenvalues}
 \tau(T)
 =
 \max_i\left|\log|\lambda_i|\right|.
\end{equation}
If every eigenvalue has absolute value \(1\), there is a full
\(\Ocoeff\)-lattice stable under both \(T\) and \(T^{-1}\); its gauge norm
makes \(T\) an isometry.
\end{proposition}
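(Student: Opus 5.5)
Since the stable length $\tau(T)$ does not depend on the base point, and all norms on a finite-dimensional space $W$ lie at finite mutual distance, the plan is to fix one convenient norm. Let $\alpha$ be the sup-norm attached to a basis $e_1,\dots,e_d$, so $\alpha(\sum a_ie_i)=\max|a_i|$. Unwinding the definitions, as in the proof of Proposition~\ref{prop:inverse-limit-transport}, gives
\[
 \dinf(\alpha,T^m\alpha)=\max\{\log\|T^m\|_\alpha,\log\|T^{-m}\|_\alpha\},
\]
where $\|\cdot\|_\alpha$ is the operator norm. Dividing by $m$ and letting $m\to\infty$, both terms converge by Gelfand's formula, which holds over complete non-Archimedean fields as well (only submultiplicativity is used). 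Hence $\tau(T)=\max\{\log r(T),\log r(T^{-1})\}$, which is \eqref{eq:finite-spectral-length}. The spectral radius $r(T)$ can then be identified with $\max_i|\lambda_i|$, computed after extending the norm to a finite extension $E'\supseteq E$ containing all the eigenvalues. Because the max-norm passes to $W\otimes E'$ without changing the operator norm, the stable length is unchanged by this extension.

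For the eigenvalue identity, I would work over $E'$ and put $T$ into Jordan form, $T=SJS^{-1}$. Conjugation by $S$ changes the base norm by a bounded distance, so Proposition~\ref{prop:stable-change-base} lets me compute with $J$ in the max-norm. Each Jordan block $\lambda I+N$ satisfies $\|(\lambda I+N)^m\|=|\lambda|^m$ up to a factor polynomial in $m$; in fact the non-Archimedean estimate gives $\|J^m\|\le \max_i|\lambda_i|^{m}\cdot C\,m^{d}$, and the polynomial factor disappears after taking the $m$-th root. This shows $r(T)=\max_i|\lambda_i|$ and $r(T^{-1})=\max_i|\lambda_i|^{-1}$. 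It follows that
\[
 \max\{\log r(T),\log r(T^{-1})\}=\max_i\bigl|\log|\lambda_i|\bigr|,
\]
which is \eqref{eq:finite-spectral-length-eigenvalues}.

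For the final assertion, assume every $|\lambda_i|=1$. The characteristic polynomial of $T$ then has all of its roots of absolute value $1$, so its coefficients lie in $\Ocoeff$ and its constant term is a unit. Consequently $T$ is integral over $\Ocoeff$, and $T^{-1}$ is a polynomial in $T$ with $\Ocoeff$ coefficients. Take any full lattice $\Lambda'\subset W$ and set $\Lambda=\sum_{k=0}^{d-1}T^k\Lambda'$. By Cayley--Hamilton, $\Lambda$ is stable under $T$, and since $T^{-1}\in\Ocoeff[T]$ it is also stable under $T^{-1}$. It is finitely generated and contains $\Lambda'$, so it is a full lattice. Its gauge norm $\alpha_\Lambda$ satisfies $\alpha_\Lambda(Tv)\le\alpha_\Lambda(v)$ and the same inequality for $T^{-1}$, so $T$ is an isometry of $\alpha_\Lambda$.

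I expect the main obstacle to be the careful justification of the spectral radius step over $E$. Eigenvalues may live only in an extension, and one must check that the operator norm of a Jordan block over $E'$ grows like $|\lambda|^m$ times at most a polynomial; this requires a bound on the binomial coefficients, which in the ultrametric setting are $\le 1$ in absolute value. Everything else is formal.
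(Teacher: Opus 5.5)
Your proposal is correct and follows the paper's own route: the identity $\dinf(\alpha,T^m\alpha)=\max\{\log\|T^m\|_\alpha,\log\|T^{-m}\|_\alpha\}$ plus the spectral-radius formula, then the eigenvalue form after scalar extension, then a stable lattice generated from $\Lambda'$ by an integral algebra. Your Cayley--Hamilton lattice $\sum_{k<d}T^k\Lambda'$, with $T^{-1}\in\Ocoeff[T]$, is simply an explicit form of the paper's $\Ocoeff[T,T^{-1}]\Lambda_0$. One correction to the parenthetical: submultiplicativity alone only gives existence of $\lim_m\|T^m\|^{1/m}$, not that it equals $\max_i|\lambda_i|$. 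Your Jordan-block estimate (using $|\binom{m}{k}|\le1$ and the eigenvector lower bound) supplies that identification, so the argument is complete.
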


\begin{proof}
The identity
\[
 \dinf(\alpha,T^m\alpha)
 =
 \max\{\log\|T^m\|_\alpha,\log\|T^{-m}\|_\alpha\}
\]
and the finite-dimensional spectral-radius formula give
\eqref{eq:finite-spectral-length}.  The eigenvalue formula follows after
scalar extension.

If all eigenvalues have absolute value \(1\), then both \(T\) and \(T^{-1}\)
are integral over \(\Ocoeff\).  Hence \(\Ocoeff[T,T^{-1}]\) is finite over
\(\Ocoeff\).  For any full lattice \(\Lambda_0\),
\[
 \Lambda=\Ocoeff[T,T^{-1}]\Lambda_0
\]
is a full lattice stable under \(T\) and \(T^{-1}\).
\end{proof}

\subsection{Why the sup metric is not CAT(0)}

Suppose that $V$ has two linearly independent basis vectors $e_1,e_2$.
Keeping all other basis weights fixed, define diagonal norms by
\[
  \alpha_x(a_1e_1+a_2e_2+\cdots)
   =\max\{|a_1|e^{x_1},|a_2|e^{x_2},\ldots\},
  \qquad x=(x_1,x_2)\in\R^2.
\]
Then
\[
  \dinf(\alpha_x,\alpha_y)
    =\max\{|x_1-y_1|,|x_2-y_2|\}.
\]
Consequently this diagonal apartment carries the $\ell^\infty$ metric.
Between $(0,0)$ and $(1,0)$, each $(1/2,t)$ with
$|t|\leq1/2$ is a midpoint and lies on a geodesic.  Hence the space is
not uniquely geodesic and is not CAT(0).

This does not indicate negative behaviour.  In finite dimension the
Goldman--\allowbreak Iwahori metric has strong injectivity and Helly
properties \cite{Haettel2022}.  Injective metric spaces share several
fixed-point features with CAT(0) spaces \cite{Lang2013}.  For the
particular fixed-point statement required here, Proposition
\ref{prop:supremum} gives a direct proof.

\section{Bounded orbits and invariant norms}
\label{sec:bounded}

The group $G$ acts on norms by
\begin{equation}\label{eq:group-action}
  (g\alpha)(v)=\alpha(g^{-1}v).
\end{equation}
This action preserves $\dinf$ whenever the distance is finite.

\begin{theorem}[Bounded-orbit criterion]\label{thm:bounded-orbit}
Let $\alpha_0$ be a norm on $V$.  The following are equivalent.
\begin{enumerate}
  \item There is a $G$-invariant norm in $\Norm_{\alpha_0}(V)$.
  \item The orbit $G\alpha_0$ is bounded in $\Norm_{\alpha_0}(V)$.
  \item The orbit of some norm in $\Norm_{\alpha_0}(V)$ is bounded.
\end{enumerate}
If these conditions hold, then
\begin{equation}\label{eq:invariant-sup}
  \alpha_G(v)=\sup_{g\in G}\alpha_0(g^{-1}v)
\end{equation}
is a $G$-invariant norm in $\Norm_{\alpha_0}(V)$.
\end{theorem}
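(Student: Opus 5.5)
I will prove the cycle of implications (1)$\Rightarrow$(2)$\Rightarrow$(3)$\Rightarrow$(1), and obtain the formula \eqref{eq:invariant-sup} inside the step (2)$\Rightarrow$(1). The tools are the isometric action \eqref{eq:group-action} and Proposition \ref{prop:supremum}.

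For (1)$\Rightarrow$(2), let $\beta\in\Norm_{\alpha_0}(V)$ be $G$-invariant and put $d=\dinf(\alpha_0,\beta)<\infty$. For every $g\in G$, isometry of the action gives
\[
 \dinf(g\alpha_0,\beta)=\dinf(g\alpha_0,g\beta)=\dinf(\alpha_0,\beta)=d .
\]
Hence $\dinf(g\alpha_0,\alpha_0)\leq 2d$, so the orbit lies in $\Norm_{\alpha_0}(V)$ and is bounded. The implication (2)$\Rightarrow$(3) is trivial, since $\alpha_0$ itself lies in the component.

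For (3)$\Rightarrow$(2), suppose $\alpha\in\Norm_{\alpha_0}(V)$ has bounded orbit. By the triangle inequality and isometry,
\[
 \dinf(\alpha_0,g\alpha_0)\leq \dinf(\alpha_0,\alpha)+\dinf(\alpha,g\alpha)+\dinf(g\alpha,g\alpha_0)\leq 2\dinf(\alpha_0,\alpha)+\dinf(\alpha,g\alpha),
\]
which is bounded uniformly in $g$. This is the same argument as Proposition \ref{prop:stable-change-base}.

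For (2)$\Rightarrow$(1), let $R=\sup_g\dinf(g\alpha_0,\alpha_0)<\infty$. Apply Proposition \ref{prop:supremum} to $\mathcal A=G\alpha_0$. It shows that $\alpha_G(v)=\sup_g(g\alpha_0)(v)=\sup_g\alpha_0(g^{-1}v)$ is a norm with $\dinf(\alpha_G,\alpha_0)\leq R$; in fact $\alpha_0\leq\alpha_G\leq e^R\alpha_0$. Invariance follows by reindexing the supremum:
\[
 (h\alpha_G)(v)=\sup_g\alpha_0(g^{-1}h^{-1}v)=\sup_{g'}\alpha_0(g'^{-1}v)=\alpha_G(v).
\]

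The only point needing care is that the action preserves $\dinf$ and maps $\Norm_{\alpha_0}(V)$ into the correct component. The identity $\dinf(g\alpha,g\beta)=\dinf(\alpha,\beta)$ holds for all norms, because $v\mapsto g^{-1}v$ is a bijection of $V\setminus\{0\}$. Whether $g$ preserves the component is exactly what boundedness of the orbit supplies, and in each step above membership is deduced from a finite distance before the triangle inequality is used. Apart from this bookkeeping, everything reduces to Proposition \ref{prop:supremum}.
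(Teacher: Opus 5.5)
Your proof is correct and follows the paper's argument. For (1)$\Rightarrow$(2) you use the triangle inequality with invariance, and for (2)$\Rightarrow$(1) you apply Proposition~\ref{prop:supremum} to the orbit and reindex the supremum; your explicit estimate $\dinf(\alpha_0,g\alpha_0)\leq 2\dinf(\alpha_0,\alpha)+\dinf(\alpha,g\alpha)$ for (3)$\Rightarrow$(2) spells out what the paper compresses into the remark that points of one finite-distance component are at finite distance from each other.
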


\begin{proof}
Suppose first that $\beta\in\Norm_{\alpha_0}(V)$ is $G$-invariant.  By
the triangle inequality and invariance,
\[
 \dinf(\alpha_0,g\alpha_0)
 \leq\dinf(\alpha_0,\beta)+\dinf(\beta,g\alpha_0)
 =2\dinf(\alpha_0,\beta),
\]
so the orbit is bounded.

Conversely, suppose that
\[
 D=\sup_{g\in G}\dinf(\alpha_0,g\alpha_0)<\infty.
\]
Then every $g\alpha_0$ lies between $e^{-D}\alpha_0$ and
$e^D\alpha_0$.  Proposition \ref{prop:supremum} therefore shows that
\eqref{eq:invariant-sup} is a norm in the same component.  For
$h\in G$,
\[
 \alpha_G(h^{-1}v)
 =\sup_{g\in G}\alpha_0(g^{-1}h^{-1}v)
 =\sup_{u\in G}\alpha_0(u^{-1}v)
 =\alpha_G(v),
\]
so it is invariant.  The equivalence with (3) follows because all
points in one finite-distance component are at finite distance from one
another.
\end{proof}

\begin{remark}[The least invariant norm above a basepoint]
If the orbit of $\alpha_0$ is bounded, then $\alpha_G$ is the least
$G$-invariant norm dominating $\alpha_0$.  Indeed, if $\beta$ is
invariant and $\beta\geq\alpha_0$, then
$\beta(v)=\beta(g^{-1}v)\geq\alpha_0(g^{-1}v)$ for every $g$, hence
$\beta\geq\alpha_G$.
\end{remark}

\begin{theorem}[Exact distance to the invariant locus]
\label{thm:exact-orbit-radius}
Assume \(G\alpha_0\) is bounded and put
\[
 D
 =
 \operatorname{diam}(G\alpha_0)
 =
 \sup_{g\in G}\dinf(\alpha_0,g\alpha_0).
\]
Let \(\alpha_G\) be the invariant supremum norm constructed above.  Then
\begin{equation}\label{eq:optimal-invariant-center}
 \alpha_*=e^{-D/2}\alpha_G
\end{equation}
is \(G\)-invariant and
\begin{equation}\label{eq:exact-distance-fixed-locus}
 \boxed{
 \inf_{\beta\in\Norm_{\alpha_0}(V)^G}
 \dinf(\alpha_0,\beta)
 =
 \dinf(\alpha_0,\alpha_*)
 =
 \frac D2.
 }
\end{equation}
\end{theorem}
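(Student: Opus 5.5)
The plan is to combine Theorem \ref{thm:bounded-orbit} with the circumradius computation of Theorem \ref{thm:exact-circumradius}, applied to the family \(\mathcal A=G\alpha_0\).

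\emph{Step 1: the two formulas for \(D\) agree.} Since the action is isometric,
\[
\dinf(g\alpha_0,h\alpha_0)=\dinf(\alpha_0,g^{-1}h\alpha_0).
\]
Hence \(\operatorname{diam}(G\alpha_0)=\sup_{g}\dinf(\alpha_0,g\alpha_0)\), which justifies the definition of \(D\) in the statement.

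\emph{Step 2: \(\alpha_*\) is invariant and is the circumcentre.} By Theorem \ref{thm:bounded-orbit}, \(\alpha_G\) is a \(G\)-invariant norm in \(\Norm_{\alpha_0}(V)\). Scaling by the constant \(e^{-D/2}\) preserves invariance and changes distances by at most \(D/2\), so \(\alpha_*\in\Norm_{\alpha_0}(V)^G\). Moreover
\[
\sup_{\gamma\in G\alpha_0}\gamma(v)=\sup_g\alpha_0(g^{-1}v)=\alpha_G(v),
\]
so \(\alpha_*\) is exactly the circumcentre \(c_{\mathcal A}\) of \eqref{eq:explicit-circumcenter}. Theorem \ref{thm:exact-circumradius} then gives \(\dinf(\alpha_0,\alpha_*)\le D/2\). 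Explicitly, \(\alpha_0\le\alpha_G\le e^{D}\alpha_0\), so \(e^{-D/2}\alpha_0\le\alpha_*\le e^{D/2}\alpha_0\).

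\emph{Step 3: the lower bound.} Let \(\beta\) be any \(G\)-invariant norm in the component, and put \(r=\dinf(\alpha_0,\beta)\). Invariance gives
\[
\dinf(g\alpha_0,\beta)=\dinf(g\alpha_0,g\beta)=r\quad\text{for all }g.
\]
So the whole orbit lies in \(B(\beta,r)\). The converse half of Theorem \ref{thm:exact-circumradius} (or directly the triangle inequality \(\dinf(\alpha_0,g\alpha_0)\le 2r\)) then gives \(D\le 2r\).

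\emph{Conclusion.} The infimum is therefore at least \(D/2\), and by Step 2 it is attained at \(\alpha_*\), where \(\dinf(\alpha_0,\alpha_*)\le D/2\). Combining the two gives equality throughout \eqref{eq:exact-distance-fixed-locus}.

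There is no real obstacle here. The only point needing care is the identification in Step 2 of \(\sup_{\gamma\in G\alpha_0}\gamma\) with \(\alpha_G\), which follows from the action convention \eqref{eq:group-action}.
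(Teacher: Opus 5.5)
Your proposal is correct and follows the paper's proof. You identify $\alpha_*$ with the circumcentre $c_{\mathcal A}$ of Theorem~\ref{thm:exact-circumradius} for $\mathcal A=G\alpha_0$ to obtain $\dinf(\alpha_0,\alpha_*)\le D/2$. For the reverse inequality you use $G$-invariance of $\beta$ and the triangle inequality $\dinf(\alpha_0,g\alpha_0)\le 2\dinf(\alpha_0,\beta)$, exactly as the paper does; your explicit check that $\sup_{\gamma\in G\alpha_0}\gamma=\alpha_G$ under \eqref{eq:group-action} spells out a step the paper leaves implicit.
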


\begin{proof}
Isometricity gives
\[
 \operatorname{diam}(G\alpha_0)
 =
 \sup_{g\in G}\dinf(\alpha_0,g\alpha_0).
\]
Theorem~\ref{thm:exact-circumradius}, applied to the orbit, shows that
\(\alpha_*\) is at distance at most \(D/2\) from every orbit point.  Since
the supremum norm \(\alpha_G\) is \(G\)-invariant, so is \(\alpha_*\).

Conversely, if \(\beta\) is \(G\)-invariant, then
\[
 \dinf(\alpha_0,g\alpha_0)
 \leq
 \dinf(\alpha_0,\beta)+
 \dinf(\beta,g\alpha_0)
 =
 2\dinf(\alpha_0,\beta).
\]
Taking the supremum gives the reverse inequality.
\end{proof}

\begin{corollary}[Sharp quantitative bounded-orbit criterion]
\label{cor:sharp-bounded-orbit}
For \(R\geq0\), the following are equivalent:
\begin{enumerate}
 \item
 \[
  \sup_{g\in G}\dinf(\alpha_0,g\alpha_0)\leq2R;
 \]
 \item there is a \(G\)-invariant norm \(\beta\) with
 \[
  \dinf(\alpha_0,\beta)\leq R.
 \]
\end{enumerate}
Thus the invariant-norm radius is exactly half the orbit diameter.
\end{corollary}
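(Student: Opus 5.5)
The equivalence should follow quickly from Theorem~\ref{thm:exact-orbit-radius}. I will prove the two implications separately. I will also treat the degenerate case in which the orbit is unbounded, which the exact-distance theorem does not cover.

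For (2)\(\Rightarrow\)(1), let \(\beta\) be \(G\)-invariant with \(\dinf(\alpha_0,\beta)\le R\). For every \(g\in G\), the triangle inequality gives
\[
 \dinf(\alpha_0,g\alpha_0)\le \dinf(\alpha_0,\beta)+\dinf(\beta,g\alpha_0).
\]
Invariance of \(\beta\) and isometricity of the action give \(\dinf(\beta,g\alpha_0)=\dinf(g\beta,g\alpha_0)=\dinf(\beta,\alpha_0)\). Here all distances are finite because \(\beta\) lies in the component \(\Norm_{\alpha_0}(V)\), so the action preserves them. Hence \(\dinf(\alpha_0,g\alpha_0)\le 2R\), and taking the supremum over \(g\) gives (1). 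This is the same computation as in the converse half of the proof of Theorem~\ref{thm:exact-orbit-radius}.

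For (1)\(\Rightarrow\)(2), condition (1) says the orbit \(G\alpha_0\) is bounded. Its diameter is
\[
 D=\sup_{g}\dinf(\alpha_0,g\alpha_0)\le 2R,
\]
where the diameter is realised by this supremum because of the identity \(\dinf(g\alpha_0,h\alpha_0)=\dinf(\alpha_0,g^{-1}h\alpha_0)\). Theorem~\ref{thm:exact-orbit-radius} then supplies the \(G\)-invariant norm \(\alpha_*=e^{-D/2}\alpha_G\), which lies in \(\Norm_{\alpha_0}(V)\) and satisfies \(\dinf(\alpha_0,\alpha_*)=D/2\le R\). So \(\beta=\alpha_*\) witnesses (2).

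The final sentence of the corollary, that the invariant-norm radius is exactly half the orbit diameter, is then just the boxed identity \eqref{eq:exact-distance-fixed-locus} restated. There is no real obstacle here. The only points needing care are that the action is isometric on finite distances, and that the diameter of the orbit coincides with the supremum based at \(\alpha_0\); both are immediate from invariance of \(\dinf\) under \(G\).
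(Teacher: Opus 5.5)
Your proposal is correct and follows the paper's route: the corollary is read off from Theorem~\ref{thm:exact-orbit-radius}. The direction (2)$\Rightarrow$(1) is the triangle-inequality estimate $\dinf(\alpha_0,g\alpha_0)\le 2\dinf(\alpha_0,\beta)$, and (1)$\Rightarrow$(2) is supplied by the invariant centre $\alpha_*=e^{-D/2}\alpha_G$ at distance $D/2\le R$, with the unbounded case correctly covered by the contrapositive of (2)$\Rightarrow$(1).
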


\begin{corollary}[Compatible finite-level centre]
\label{cor:compatible-finite-level-center}
With the notation above, write
\[
 \alpha_*=(\alpha_{*,K})_K.
\]
Then
\[
 d_K(\alpha_{0,K},\alpha_{*,K})\leq D/2
\]
for every \(K\), and the family \((\alpha_{*,K})_K\) is compatible under
restriction and transport.  Thus the optimal global centre is already a
compatible centre of every finite transport diagram.
\end{corollary}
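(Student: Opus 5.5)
The plan is to read off both assertions from Theorem~\ref{thm:exact-orbit-radius}, using the identification of Proposition~\ref{prop:all-norms-limit} and the description \eqref{eq:bounded-projective-limit}. Under these, the finite-level components of $\alpha_*$ are simply its restrictions, $\alpha_{*,K}=\alpha_*|_{V_K}$, and likewise $\alpha_{0,K}=o_K=\alpha_0|_{V_K}$.

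\emph{Distance bound.} By \eqref{eq:distance-fixed}, or equivalently \eqref{eq:inverse-limit-metric}, we have
\[
 \sup_K d_K(\alpha_{0,K},\alpha_{*,K})=\dinf(\alpha_0,\alpha_*),
\]
and Theorem~\ref{thm:exact-orbit-radius} gives $\dinf(\alpha_0,\alpha_*)=D/2$. Each individual term is therefore at most $D/2$. More directly, the pointwise comparison $e^{-D/2}\alpha_0\leq\alpha_*\leq e^{D/2}\alpha_0$ on $V$ restricts to the same comparison on each $V_K$.

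\emph{Restriction compatibility.} For $L\subseteq K$ we have $V_K\subseteq V_L$. Hence
\[
 r_{L,K}(\alpha_{*,L})=(\alpha_*|_{V_L})|_{V_K}=\alpha_*|_{V_K}=\alpha_{*,K}.
\]
This holds for the restrictions of any global norm.

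\emph{Transport compatibility.} Since $\alpha_*$ is $G$-invariant, $g\alpha_*=\alpha_*$ for every $g\in G$. Formula \eqref{eq:action-on-limit} then gives $\tau_{g,K}(\alpha_{*,g^{-1}Kg})=(g\alpha_*)_K=\alpha_{*,K}$. Concretely, for $v\in V_K$ we have $g^{-1}v\in V_{g^{-1}Kg}$ and $\alpha_*(g^{-1}v)=\alpha_*(v)$.

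Combining the two compatibilities, the family $(\alpha_{*,K})_K$ solves every finite building diagram in the sense of Theorem~\ref{thm:finite-diagrams}, with all vertices in the balls $\overline B(o_K,D/2)$. This is the final sentence of the statement. There is no real obstacle. The only point needing care is to check that the action convention \eqref{eq:group-action} matches the direction of $\tau_{g,K}$, and the one-line computation above does exactly that.
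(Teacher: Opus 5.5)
Your proof is correct and is essentially the argument the paper intends. The corollary is stated without a separate proof, as the level-wise reading of Theorem~\ref{thm:exact-orbit-radius} through Proposition~\ref{prop:all-norms-limit} and \eqref{eq:action-on-limit}: restrict the comparison $e^{-D/2}\alpha_0\leq\alpha_*\leq e^{D/2}\alpha_0$ to each $V_K$, and use $G$-invariance of $\alpha_*$ together with $g^{-1}V_K=V_{g^{-1}Kg}$ for the transport equations, which is exactly the convention check you carry out.
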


\begin{remark}[There is no fixed point theorem]
Observe that if $G$ stabilizes a non-empty bounded subset $B$ of $\Norm_{\alpha_0}(V)$, then the fixed point of $G$(if it exists) may not lie in $B$. For example, if $G=GL_3(F)$, define a smooth character $\varepsilon(g)=(-1)^{v_F(\det g)}$.
Let $V=Ee_1\oplus Ee_2$ with \(G\)-action
\[
\rho(g)=
\begin{cases}
	1, & \text{if } \varepsilon(g)=1,\\
	s, & \text{if } \varepsilon(g)=-1,
\end{cases}
\qquad
s(e_1)=e_2,\quad s(e_2)=e_1.
\]
Equivalently, $V\simeq \mathbf{1}\oplus\varepsilon$.This representation is locally algebraic with trivial algebraic factor. Choose \(c>1\) and define
\[
\alpha_1(xe_1+ye_2)
=\max\bigl\{|x|,c|y|\bigr\},
\qquad
\alpha_2(xe_1+ye_2)
=\max\bigl\{c|x|,|y|\bigr\}.
\]
If \(v_F(\det g)\) is even, then $g\alpha_1=\alpha_1$ and $g\alpha_2=\alpha_2$. If \(v_F(\det g)\) is odd, then $g\alpha_1=\alpha_2$ and
$g\alpha_2=\alpha_1$. In particular, $g_0=\operatorname{diag}(\varpi_F,1,1)$ exchanges the two norms. Hence $B=\{\alpha_1,\alpha_2\}$ is \(G\)-stable and bounded, since $d_\infty(\alpha_1,\alpha_2)=\log c$. Nevertheless, \(B\) has no \(G\)-fixed point, because \(g_0\) exchanges
its two elements. The invariant supremum norm is
\[
\begin{aligned}
	\alpha_B(xe_1+ye_2)
	&=\sup_{\gamma\in B}\gamma(xe_1+ye_2)\\
	&=c\max\bigl\{|x|,|y|\bigr\},
\end{aligned}
\]
but $\alpha_B\notin B$. So the Bruhat-Tits fixed point theorem is does not hold in $\Norm_{\alpha_0}(V)$, for example \cite[Theorem 11.23]{MR2439729}.

\end{remark}

\begin{corollary}\label{cor:center}
Suppose that $V$ has central character $\omega:Z(G)\to E^\times$.  If
$V$ admits a $G$-invariant norm, then $\omega$ is unitary:
$|\omega(z)|=1$ for every $z\in Z(G)$.
\end{corollary}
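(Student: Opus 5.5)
Let $\beta$ be a $G$-invariant norm on $V$; the argument only uses that $\beta$ is a norm and that $\beta(gv)=\beta(v)$ for all $g\in G$ and $v\in V$. The central character gives $zv=\omega(z)v$ for $z\in Z(G)$. Evaluating invariance at such a $z$ and applying homogeneity of the norm yields
\[
 \beta(v)=\beta(zv)=\beta(\omega(z)v)=|\omega(z)|\,\beta(v).
\]
We may take $v\neq0$ because $V\neq0$; if $V=0$ the statement is vacuous, or $\omega$ is not determined, and we exclude that case. For $v\neq0$ nondegeneracy gives $\beta(v)\neq0$, so $|\omega(z)|=1$.

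Alternatively, the same conclusion follows from Theorem \ref{thm:bounded-orbit} and the orbit bound, with $\beta$ in the component of any reference norm $\alpha_0$. For every $m\in\mathbb Z$,
\[
 \dinf(\beta,z^m\beta)=\bigl|m\log|\omega(z)|\bigr|,
\]
since $(z^m\beta)(v)=|\omega(z)|^{-m}\beta(v)$. A bounded orbit then forces $\log|\omega(z)|=0$. In the language of Proposition \ref{prop:finite-spectral-displacement}, $z$ acts on each $V_K$ by a scalar whose stable displacement is $\bigl|\log|\omega(z)|\bigr|$. This must vanish when the orbit is bounded.

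There is no real obstacle here. The only point needing care is that $V$ is nonzero and that $\beta$ is a genuine norm, not merely a seminorm, so that $\beta(v)\neq0$ can be cancelled. One might also note that "central character" for $V=\pi_{\mathrm{sm}}\otimes\sigma_{\mathrm{alg}}$ means that the product of the smooth and algebraic central characters is a scalar. The argument only uses that $Z(G)$ acts by scalars.
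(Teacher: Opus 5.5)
Your main argument is the paper's proof. Invariance under $z$, the relation $zv=\omega(z)v$, and homogeneity give $\beta(v)=|\omega(z)|\,\beta(v)$, and nondegeneracy at some $v\neq0$ then forces $|\omega(z)|=1$; your alternative orbit-displacement argument is also correct, just unnecessary.
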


\begin{proof}
For $v\neq0$ and $z\in Z(G)$,
\[
 \alpha(v)=\alpha(zv)=\alpha(\omega(z)v)
 =|\omega(z)|\alpha(v).
\]
\end{proof}

\section{The Emerton's condition and the existence of G-invariant norms}

Let $P=MN$ be a parabolic subgroup, fix a compact open subgroup
$N_0\subseteq N$, and put
\[
 M^+=\{m\in M:mN_0m^{-1}\subseteq N_0\},\qquad
 Z_M^+=Z(M)\cap M^+.
\]
Write
\[
 \delta_P(m)
  =\left|\det\!\left(\operatorname{Ad}(m)|_{\operatorname{Lie}N}
    \right)\right|_F.
\]
We use Emerton's finite-slope Jacquet module $J_P(V)$ for the locally
algebraic representation $V$ \cite{MR2181093}.  A character
$\chi:Z(M)\to E^\times$ is called an \emph{exponent} if its
generalised eigenspace in $J_P(V)$ is nonzero.

\begin{definition}[Emerton's condition]
\label{def:Emerton-condition}
With the preceding unnormalised Jacquet convention, $V$ satisfies
$(\mathrm{EC})$ if, for every $P,N_0$, every exponent $\chi$, and every
$z\in Z_M^+$,
\begin{equation}\label{eq:Emerton-condition}
 \bigl|\chi(z)\delta_P(z)^{-1}\bigr|\leq1.
\end{equation}
\end{definition}

The factor $\delta_P^{-1}$ is part of the convention: on the
$\chi$-eigenspace the positive Jacquet operator is normalised to have
eigenvalue $\chi(z)\delta_P(z)^{-1}$.  With a normalised Jacquet
functor the same condition is written after absorbing the appropriate
modulus character into $\chi$.  Hu proves that an invariant norm
implies these inequalities and relates them to admissible filtrations
\cite{MR2560407}; the inequalities alone do not construct a global
lattice.

\subsection{A general strategy}

We now explain how the preceding necessary conditions fit into a general
strategy for constructing an invariant norm.  In this subsection $G$ is the
$F$-points of a connected reductive group, $V$ is locally algebraic, the
central character is assumed unitary, and $V$ is assumed to satisfy
$(\mathrm{EC})$.  The discussion is a reduction programme rather than a
sufficiency theorem: the missing point in general is to construct a basis in
which the required matrix estimates descend to a finite transition problem.

Choose a special maximal compact subgroup $K\subseteq G$ and a maximal
$F$-split torus $S$.  After fixing a positive chamber, Cartan decomposition
has the form
\[
 G=\coprod_{\lambda\in\Lambda^+}K\lambda(\varpi_F)K,
\]
where $\Lambda^+$ is the semigroup of dominant cocharacters modulo the
finite ambiguity coming from the component group and the centre.  Choose
cocharacters $\lambda_1,\ldots,\lambda_s$ which generate the image of
$\Lambda^+$ modulo the central cocharacters, and put
\[
 a_i=\lambda_i(\varpi_F).
\]
Because the central character is unitary, central factors act isometrically
on every norm.  Hence, for a $K$-invariant norm $\alpha$, a uniform estimate
\begin{equation}\label{eq:general-Cartan-uniform-target}
 \sup_{m\geq0}d_\infty(\alpha,a_i^m\alpha)<\infty,
 \qquad 1\leq i\leq s,
\end{equation}
(and the analogous estimates for the inverse directions, or a reduction of
them to complementary positive directions) implies boundedness of the full
Cartan orbit.  Indeed, the action on the norm space is isometric, so the
triangle inequality bounds a product
$a_1^{m_1}\cdots a_s^{m_s}$ by the sum of the finitely many bounds in
\eqref{eq:general-Cartan-uniform-target}.  Compact and unitary central
factors do not change the bound.

The useful algebraic input is a Hamel basis adapted simultaneously to the
smooth type data and to a weight basis of the algebraic factor.  Write such a
basis as $\{e_\nu\}_{\nu\in I}$ and
\begin{equation}\label{eq:general-matrix-coefficients}
 ge_\nu=\sum_{\mu\in I}A_{\mu\nu}(g)e_\mu,
\end{equation}
where every column is finite.  Choose positive weights $c_\nu$ and the max
norm
\[
 \alpha_x\!\left(\sum_\nu b_\nu e_\nu\right)
 =\max_\nu |b_\nu|c_\nu,
 \qquad
 x_\nu=\log_{q_E}c_\nu.
\]
Then the matrix of $g$ gives
\begin{equation}\label{eq:general-weighted-matrix-bound}
 \|g\|_{\alpha_x}
 =\sup_{\mu,\nu}
 |A_{\mu\nu}(g)|\frac{c_\mu}{c_\nu},
\end{equation}
with the same formula for $g^{-1}$.  Thus the problem of bounding
$d_\infty(\alpha_x,g\alpha_x)$ becomes a problem of finding a constant $C$ independent of $\mu$,$\nu$, and $g$, such that $v_E(A_{\mu\nu}(g)) + x_{\mu} - x_{\nu} \geq C$.

\subsection{Transition graphs and finite-state reductions}

The matrix estimate in \eqref{eq:general-weighted-matrix-bound} admits a
precise graph-theoretic formulation.  We first isolate the part which is
purely linear algebra and therefore unconditional.

\begin{definition}[Weighted Cartan transition graph]
\label{def:weighted-Cartan-transition-graph}
Let $b\in G$ and let $\{e_\nu\}_{\nu\in I}$ be a Hamel basis for which
\[
 be_\nu=\sum_{\mu\in I}A_{\mu\nu}(b)e_\mu
\]
has finite columns.  The \emph{weighted transition graph} $\Gamma_b(I)$ has
vertex set $I$ and a directed edge
\[
 e=(\nu\xrightarrow{\,b\,}\mu)
\]
precisely when $A_{\mu\nu}(b)\neq0$.  Its edge cost is
\[
 w_b(e)=v_E\!\left(A_{\mu\nu}(b)\right).
\]
If a normalized operator $\widetilde b=\gamma_b b$ is used instead of $b$,
then $A_{\mu\nu}(b)$ and $w_b(e)$ mean the matrix coefficient and valuation
for $\widetilde b$.  Thus all algebraic-weight and modulus factors are part
of the edge cost rather than being added informally afterwards.
\end{definition}

\begin{lemma}[Difference inequalities and weighted max norms]
\label{lem:graph-potential-max-norm}
Let $x:I\to\mathbb R$, put $c_\nu=q_E^{x_\nu}$, and let $\alpha_x$ be the
corresponding weighted max norm.  For a fixed $b\in G$ the following are
equivalent:
\begin{enumerate}
 \item $\|b\|_{\alpha_x}\leq1$;
 \item for every edge $\nu\xrightarrow{b}\mu$ of $\Gamma_b(I)$,
 \begin{equation}\label{eq:edge-difference-inequality-general}
   x_\mu-x_\nu\leq w_b(\nu\to\mu).
 \end{equation}
\end{enumerate}
Consequently, if \eqref{eq:edge-difference-inequality-general} holds for
$b$ and for $b^{-1}$, then $b$ acts isometrically on $\alpha_x$.  If it
holds for every element of a semigroup generated by
$b_1,\ldots,b_s$ at the level of the generators, then every positive word in
the $b_i$ is nonexpanding.
\end{lemma}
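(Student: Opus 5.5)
The plan is to compute $\|b\|_{\alpha_x}$ directly from the weighted max norm and then read the consequences off submultiplicativity of the operator norm. First I will verify the formula \eqref{eq:general-weighted-matrix-bound} for a single $b$. For $v=\sum_\nu b_\nu e_\nu$ with finite support, linearity and the ultrametric inequality give
\[
\alpha_x(bv)=\max_\mu\Bigl|\sum_\nu A_{\mu\nu}(b)b_\nu\Bigr|c_\mu\leq\max_{\mu,\nu}|A_{\mu\nu}(b)|\,|b_\nu|\,c_\mu .
\]
Writing $|A_{\mu\nu}(b)|c_\mu=\bigl(|A_{\mu\nu}(b)|c_\mu/c_\nu\bigr)c_\nu$ bounds this by $\sup_{\mu,\nu}|A_{\mu\nu}(b)|c_\mu/c_\nu\cdot\alpha_x(v)$. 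Testing on $v=e_\nu$, where $\alpha_x(be_\nu)=\max_\mu|A_{\mu\nu}(b)|c_\mu$ and $\alpha_x(e_\nu)=c_\nu$, gives the reverse inequality. Hence
\[
\|b\|_{\alpha_x}=\sup_{\mu,\nu}|A_{\mu\nu}(b)|\,c_\mu/c_\nu .
\]

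Next I will translate this into valuations. Since $|a|=q_E^{-v_E(a)}$ and $c_\nu=q_E^{x_\nu}$, each nonzero term equals $q_E^{-w_b(\nu\to\mu)+x_\mu-x_\nu}$, while zero coefficients contribute nothing and correspond exactly to the absence of an edge in $\Gamma_b(I)$. So $\|b\|_{\alpha_x}\leq1$ holds if and only if every edge satisfies \eqref{eq:edge-difference-inequality-general}, because a supremum of powers of $q_E>1$ is at most $1$ exactly when each exponent is nonpositive. The one point needing care is that if a normalized operator $\widetilde b$ is used, the same computation should be run with $\widetilde b$ in place of $b$, as the definition stipulates.

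For the consequences, suppose \eqref{eq:edge-difference-inequality-general} holds for both $b$ and $b^{-1}$. Then $\|b\|\leq1$ and $\|b^{-1}\|\leq1$, so $\alpha_x(bv)\leq\alpha_x(v)=\alpha_x(b^{-1}bv)\leq\alpha_x(bv)$, and $b$ is an isometry. For the semigroup statement, the operator norm is submultiplicative: $\|b_{i_1}\cdots b_{i_k}\|\leq\prod_j\|b_{i_j}\|\leq1$. I expect no real obstacle here. The only subtle points are the finite-column hypothesis, which keeps $bv$ a finite sum, and the convention that edges with zero coefficient are omitted, so that infinite valuations cause no trouble.
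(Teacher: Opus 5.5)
Your proposal is correct and follows the paper's route: compute $\|b\|_{\alpha_x}$ as the supremum of $q_E^{-v_E(A_{\mu\nu}(b))+x_\mu-x_\nu}$ over nonzero coefficients, read off the edge inequalities, then apply the same bound to $b^{-1}$ for isometry and use submultiplicativity for positive words. The only difference is that you also prove the weighted operator-norm formula, by the ultrametric upper bound and testing on basis vectors; the paper simply cites that formula from its earlier display.
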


\begin{proof}
By \eqref{eq:general-weighted-matrix-bound},
\[
 \|b\|_{\alpha_x}
 =\sup_{\mu,\nu}
 q_E^{-v_E(A_{\mu\nu}(b))+x_\mu-x_\nu}.
\]
The supremum is at most $1$ exactly when
$x_\mu-x_\nu\leq v_E(A_{\mu\nu}(b))$ for every nonzero coefficient.  This is
\eqref{eq:edge-difference-inequality-general}.  Applying the same argument
to $b^{-1}$ gives both inequalities
$\alpha_x(bv)\leq\alpha_x(v)$ and
$\alpha_x(v)\leq\alpha_x(bv)$, hence equality.  The last assertion follows
from submultiplicativity of operator norms.
\end{proof}

The graph in Definition~\ref{def:weighted-Cartan-transition-graph} is
usually infinite.  The phrase ``finite type--Bruhat graph'' will mean the
following concrete finite quotient, not merely a heuristic periodicity.

\begin{definition}[Finite type--Bruhat quotient]
\label{def:finite-type-Bruhat-quotient}
Fix a finite set $\mathcal A\subseteq G$ of Cartan generators and, when
needed, their inverses.  A \emph{finite type--Bruhat quotient} of the family
$\{\Gamma_b(I)\}_{b\in\mathcal A}$ consists of
\begin{enumerate}
 \item a finite directed graph $\overline\Gamma$ whose edges are labelled by
 elements $b\in\mathcal A$;
 \item a surjective state map $\pi:I\to V(\overline\Gamma)$;
 \item for every labelled edge
 $\bar e=(u\xrightarrow{b}v)$ a real number $\bar w_b(\bar e)$;
\end{enumerate}
with the property that every edge
$e=(\nu\xrightarrow{b}\mu)$ in the original graph maps to an edge
$\bar e=(\pi(\nu)\xrightarrow{b}\pi(\mu))$ and satisfies the uniform lower
bound
\begin{equation}\label{eq:finite-quotient-cost-descent}
 \bar w_b(\bar e)\leq w_b(e).
\end{equation}
The quotient is called \emph{exact} when the quotient cost is the minimum of
the costs of all lifted edges with the same finite states.
\end{definition}

Thus, in order to prove that the transition data descend to a finite
type--Bruhat graph, one must prove three separate facts: first, that basis
indices admit finitely many relevant type--Bruhat states after the chosen
periodic or central identifications; second, that the support of every
Cartan matrix coefficient is determined by those states; and third, that the
valuations of all lifted coefficients admit the state-dependent uniform
lower bounds \eqref{eq:finite-quotient-cost-descent}.  For the metric
$d_\infty$ the same assertions must also be available for the inverse Cartan
directions, unless those directions are reduced to positive ones by compact
and unitary central factors.  None of these three facts follows from
Emerton's condition alone.

\begin{proposition}[Finite negative-cycle criterion]
\label{prop:finite-negative-cycle-criterion}
Let $\overline\Gamma$ be a finite directed graph with real edge costs
$\bar w(e)$.  There exists a function
$\bar x:V(\overline\Gamma)\to\mathbb R$ satisfying
\begin{equation}\label{eq:finite-difference-system}
 \bar x(v)-\bar x(u)\leq\bar w(u\to v)
\end{equation}
for every directed edge if and only if every directed cycle has nonnegative
total cost.  If $\overline\Gamma$ is a finite type--Bruhat quotient, then
$x_\nu=\bar x(\pi(\nu))$ satisfies all the lifted inequalities
\eqref{eq:edge-difference-inequality-general}.
\end{proposition}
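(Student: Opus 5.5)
The plan is to prove this by the standard shortest-path (potential) argument for systems of difference constraints, followed by a one-line lifting step through the state map $\pi$.

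\emph{Necessity.} Suppose $\bar x$ satisfies \eqref{eq:finite-difference-system}, and let $u_0\to u_1\to\cdots\to u_k=u_0$ be a directed cycle. Summing the inequalities $\bar x(u_{j+1})-\bar x(u_j)\leq\bar w(u_j\to u_{j+1})$ along the cycle, the left side telescopes to $0$. Hence the total cost of the cycle is nonnegative.

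\emph{Sufficiency.} Assume every directed cycle has nonnegative total cost. Add an auxiliary source vertex $s$, with an edge of cost $0$ from $s$ to every vertex; this creates no new cycles. For a vertex $v$, define $\bar x(v)$ as the infimum of the costs of directed walks from $s$ to $v$. This infimum is finite and attained. Indeed, any walk decomposes into a simple path plus a finite collection of cycles. Since the cycles have nonnegative cost, removing them does not increase the cost, so the infimum equals a minimum over the finitely many simple paths. The trivial walk $s\to v$ has cost $0$, so $\bar x(v)\leq 0$ is finite. Now let $u\to v$ be an edge. Appending it to a walk realising $\bar x(u)$ gives a walk to $v$ of cost $\bar x(u)+\bar w(u\to v)$. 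Therefore
\[
\bar x(v)\leq \bar x(u)+\bar w(u\to v),
\]
which is exactly \eqref{eq:finite-difference-system}. The only point needing care is the finiteness of the infimum. This is where the nonnegative-cycle hypothesis and the finiteness of $\overline\Gamma$ are used: reducing to simple paths leaves finitely many candidates.

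\emph{Lifting.} Let $\overline\Gamma$ be a finite type--Bruhat quotient. Apply the criterion to the graph with all labelled edges, using the costs $\bar w_b(\bar e)$; the cycle hypothesis is understood for this labelled graph. Put $x_\nu=\bar x(\pi(\nu))$. Take an original edge $e=(\nu\xrightarrow{b}\mu)$. By Definition~\ref{def:finite-type-Bruhat-quotient} it maps to an edge $\bar e=(\pi(\nu)\xrightarrow{b}\pi(\mu))$ with $\bar w_b(\bar e)\leq w_b(e)$. Combining this with \eqref{eq:finite-difference-system} gives
\[
x_\mu-x_\nu=\bar x(\pi(\mu))-\bar x(\pi(\nu))\leq \bar w_b(\bar e)\leq w_b(e),
\]
which is \eqref{eq:edge-difference-inequality-general}. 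I expect no real obstacle: the only substantive step is the attainment of the shortest-path infimum described above.
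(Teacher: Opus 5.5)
Your proposal is correct and follows the paper's proof essentially step for step: the telescoping sum around a cycle gives necessity, the auxiliary zero-cost source with the shortest-path potential gives sufficiency, and the descent inequality $\bar w_b(\bar e)\le w_b(e)$ gives the lift through $\pi$. Defining the potential over walks and then reducing to simple paths, as you do, is slightly cleaner than the paper's phrasing in terms of paths, since appending an edge to a simple path need not give a simple path when one justifies $d(v)\le d(u)+\bar w(u\to v)$.
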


\begin{proof}
If \eqref{eq:finite-difference-system} holds, summing it around a directed
cycle telescopes the left-hand side to $0$, so the total cycle cost is
nonnegative.  Conversely, adjoin a new source $s$ with a zero-cost edge to
every vertex.  Because there is no negative directed cycle, the minimum cost
$d(v)$ of a directed path from $s$ to $v$ is finite and is attained by a
simple path.  For every edge $u\to v$ one has
$d(v)\leq d(u)+\bar w(u\to v)$, so $\bar x=d$ satisfies
\eqref{eq:finite-difference-system}.  Finally,
\[
 x_\mu-x_\nu
 =\bar x(\pi(\mu))-\bar x(\pi(\nu))
 \leq\bar w(\pi(\nu)\to\pi(\mu))
 \leq w_b(\nu\to\mu),
\]
which is the lifted inequality.
\end{proof}

\begin{corollary}[Finite-state sufficient criterion]
\label{cor:finite-state-Cartan-sufficient}
Suppose the transition graphs for the chosen Cartan generators and the
required inverse directions admit a common finite type--Bruhat quotient and
that every directed cycle in that quotient has nonnegative cost.  Then the
lifted potential of Proposition~\ref{prop:finite-negative-cycle-criterion}
defines a weighted Hamel norm for which all those Cartan generators act
isometrically.  If the norm is invariant under the chosen maximal compact
subgroup and the central character is unitary, the full $G$-orbit of the
norm is bounded; hence Theorem~\ref{thm:bounded-orbit} produces a
$G$-invariant norm.
\end{corollary}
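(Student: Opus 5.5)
The argument chains earlier results in order. Let $\mathcal A$ be the chosen Cartan generators $a_1,\dots,a_s$ together with the required inverse directions, and let $\overline\Gamma$ be the common finite type--Bruhat quotient with state map $\pi$ and costs $\bar w_b$.

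\emph{Step 1: the potential.} Since every directed cycle of $\overline\Gamma$ has nonnegative cost, Proposition~\ref{prop:finite-negative-cycle-criterion} gives $\bar x:V(\overline\Gamma)\to\R$ satisfying \eqref{eq:finite-difference-system}. The lift $x_\nu=\bar x(\pi(\nu))$ then satisfies \eqref{eq:edge-difference-inequality-general} for every edge of every $\Gamma_b(I)$ with $b\in\mathcal A$. Because $\overline\Gamma$ is finite, $\bar x$ takes finitely many values. Hence the weights $c_\nu=q_E^{x_\nu}$ are bounded above and below. In particular $\alpha_x$ is a genuine norm, and it lies at finite distance from the unweighted Hamel norm of the same basis.

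\emph{Step 2: isometry of the generators.} Apply Lemma~\ref{lem:graph-potential-max-norm} to each $b\in\mathcal A$; this gives $\|b\|_{\alpha_x}\le1$. When both $a_i$ and $a_i^{-1}$ lie in $\mathcal A$, the lemma shows that $a_i$ acts isometrically, so $a_i^m\alpha_x=\alpha_x$ for all $m\in\mathbb Z$. If some inverse directions were instead reduced to positive ones through compact and unitary central factors, I would appeal to that reduction as stated in the hypothesis of \eqref{eq:general-Cartan-uniform-target}. Either way, the uniform estimates \eqref{eq:general-Cartan-uniform-target} hold with bound $0$.

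\emph{Step 3: bounding the full orbit.} Now assume $\alpha_x$ is $K$-invariant and the central character is unitary. Every $g\in G$ can be written $g=k_1 z\,a_1^{m_1}\cdots a_s^{m_s}\,k_2$, where $k_1,k_2\in K$ and $z$ ranges over a set that is compact modulo the centre. This set absorbs the finite ambiguity from the component group and the centre in the Cartan decomposition. Then
\[
\dinf(\alpha_x,g\alpha_x)\le \dinf(\alpha_x,z\alpha_x)+\sum_i\dinf(\alpha_x,a_i^{m_i}\alpha_x),
\]
by the triangle inequality and because the action is isometric; $K$ contributes nothing by invariance. The second term vanishes by Step 2. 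Unitary central elements act isometrically. The finitely many remaining coset representatives of the component-group ambiguity contribute a bounded amount, since each $\dinf(\alpha_x,h\alpha_x)$ is finite: $h$ is a single linear automorphism, and $\alpha_x$ is comparable to its translate level by level. Hence $G\alpha_x$ is bounded, and Theorem~\ref{thm:bounded-orbit} produces the $G$-invariant norm $\alpha_G$ in $\Norm_{\alpha_x}(V)$.

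\emph{Main obstacle.} The hard step is the finiteness claim for $\dinf(\alpha_x,h\alpha_x)$ for the finitely many non-Cartan representatives, together with the compact-modulo-centre parts. In infinite dimension this finiteness is not automatic. I would handle it by enlarging $\mathcal A$ to include these finitely many representatives and their inverses, so that they become isometries by the same lemma, or by choosing them inside $KZ(G)$. Everything else is a direct chaining of Proposition~\ref{prop:finite-negative-cycle-criterion}, Lemma~\ref{lem:graph-potential-max-norm}, and Theorem~\ref{thm:bounded-orbit}.
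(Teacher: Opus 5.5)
Your argument follows the paper's proof. Proposition~\ref{prop:finite-negative-cycle-criterion} gives the lifted potential, Lemma~\ref{lem:graph-potential-max-norm} makes the selected directions isometries, and the Cartan decomposition with $K$-invariance and Lemma~\ref{lem:unitary-central-direction} bounds the orbit. Theorem~\ref{thm:bounded-orbit} then finishes.

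The one thing to repair is the detour on extra component-group representatives $h$. Your first justification is wrong: you argue that $\dinf(\alpha_x,h\alpha_x)<\infty$ because $\alpha_x$ and $h\alpha_x$ are comparable at each level $V_K$. Each level-wise distance is indeed finite, but the supremum over $K$ in \eqref{eq:distance-fixed} need not be. This is exactly the warning after \eqref{eq:orbit-on-levels}, and you were right to retract it.

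The paper never meets the issue because it uses the Cartan decomposition in the form $G=\coprod_{\lambda\in\Lambda^+}K\lambda(\varpi_F)K$, fixed in the general-strategy subsection. There $\lambda(\varpi_F)\in Z(G)\,a_1^{m_1}\cdots a_s^{m_s}$, because the $\lambda_i$ generate $\Lambda^+$ modulo central cocharacters. So its proof has no leftover representatives. This is your option of taking them inside $KZ(G)$, and it holds for split groups such as $\GL_n(F)$.

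If genuine extra representatives occur, their boundedness is not implied by $K$-invariance and a unitary centre. An example is $\varpi_D$ in $D^\times$ for a quaternion division algebra $D$, where $\mathcal O_D^\times F^\times$ has index $2$. So enlarging $\mathcal A$ to include them is an added hypothesis, not a proof. You have correctly spotted a point that the paper's own proof passes over in silence.
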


\begin{proof}
The proposition and Lemma~\ref{lem:graph-potential-max-norm} give
isometry for the selected Cartan directions.  Cartan decomposition expresses
every $g\in G$ as compact factors, a product of those Cartan directions, and
a central factor.  Compact invariance and Lemma~\ref{lem:unitary-central-direction}
below remove the first and last factors.  Thus the orbit is bounded, and
Theorem~\ref{thm:bounded-orbit} applies.
\end{proof}

\subsection{What Emerton's condition controls}

We next make precise the relation with Jacquet modules.  This relation is
spectral; by itself it does not identify the individual matrix coefficients
of a Cartan translate.

\begin{definition}[Positive Jacquet translation direction]
\label{def:Jacquet-translation-direction}
Let $P=MN$ and $N_0\subseteq N$ be as above.  A \emph{positive Jacquet
translation} is an element $z\in Z_M^+$.  Its \emph{noncentral translation
direction} is its class modulo $Z(G)\cap Z_M^+$.  On the finite-slope
Jacquet module we use the normalized positive operator for which the
$\chi$-generalized eigenspace has eigenvalue
\[
 \lambda_{\chi,z}=\chi(z)\delta_P(z)^{-1}.
\]
Thus a translation direction refers to a ray in the positive semigroup
$Z_M^+/(Z(G)\cap Z_M^+)$, not to an individual edge of a Hamel-basis graph.
\end{definition}

\begin{lemma}[Spectral consequence of Emerton's condition]
\label{lem:EC-spectral-translation}
Assume $(\mathrm{EC})$.  For every exponent $\chi$ and every
$z\in Z_M^+$,
\[
 v_E\!\left(\lambda_{\chi,z}\right)
 =v_E\!\left(\chi(z)\delta_P(z)^{-1}\right)\geq0.
\]
Equivalently, every eigenvalue of the normalized positive Jacquet operator
on a finite-dimensional generalized exponent space has absolute value at
most one.
\end{lemma}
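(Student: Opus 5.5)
This is essentially a translation of Definition~\ref{def:Emerton-condition} into valuation language, so the plan has three short steps.

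First, fix $P,N_0$, an exponent $\chi$ and $z\in Z_M^+$. By $(\mathrm{EC})$ we have $|\chi(z)\delta_P(z)^{-1}|\leq1$. Note that $\delta_P(z)$ is a power of $q_F$, hence a nonzero element of $\mathbb Q\subseteq E$, so $\lambda_{\chi,z}$ is a well-defined element of $E^\times$. The absolute value on $E$ is normalised so that $|\varpi_E|=q_E^{-1}$, which gives $|a|=q_E^{-v_E(a)}$ for $a\in E^\times$. Since $q_E>1$, the inequality $|\lambda_{\chi,z}|\leq1$ is equivalent to $v_E(\lambda_{\chi,z})\geq0$. This proves the displayed inequality.

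Second, for the ``equivalently'' clause, let $W_\chi$ be the generalised $\chi$-eigenspace in $J_P(V)$. By the convention in Definition~\ref{def:Jacquet-translation-direction}, $W_\chi$ is finite-dimensional in the situations considered. The normalised positive operator attached to $z$ acts on $W_\chi$ as $\lambda_{\chi,z}$ times a unipotent operator. After extending scalars to an algebraic closure, its only eigenvalue is therefore $\lambda_{\chi,z}$, because $Z(M)$ acts on the generalised eigenspace through $\chi$ up to nilpotent terms. Hence every eigenvalue has absolute value at most one exactly when each $|\lambda_{\chi,z}|\leq1$, and ranging over all exponents $\chi$ gives both directions of the equivalence.

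The only point needing care is this second step. One must make sure that the ``normalised positive Jacquet operator'' is precisely the one whose eigenvalue on $W_\chi$ is $\chi(z)\delta_P(z)^{-1}$, rather than $\chi(z)$ or $\chi(z)\delta_P(z)$. For this I would simply invoke the convention fixed in the text following Definition~\ref{def:Emerton-condition} and restated in Definition~\ref{def:Jacquet-translation-direction}, so that no independent modulus-character computation is required.
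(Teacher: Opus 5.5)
Your proposal is correct and follows the paper's proof: $(\mathrm{EC})$ gives $|\lambda_{\chi,z}|\leq1$, and $|a|=q_E^{-v_E(a)}$ turns this into $v_E(\lambda_{\chi,z})\geq0$. Your second step, showing that the normalised operator has $\lambda_{\chi,z}$ as its only eigenvalue on the nonzero generalised $\chi$-eigenspace, spells out what the paper leaves to its normalisation convention. One small point: Definition~\ref{def:Jacquet-translation-direction} does not assert that $W_\chi$ is finite-dimensional, but you do not need that, since the lemma itself restricts to finite-dimensional exponent spaces and any eigenvector in $W_\chi$ has eigenvalue $\lambda_{\chi,z}$ anyway.
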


\begin{proof}
This is exactly Definition~\ref{def:Emerton-condition}, rewritten using
$|a|=q_E^{-v_E(a)}$.
\end{proof}

This is the precise sense in which positive Cartan translations are
``detected on Jacquet modules'': after passing to $N_0$-coinvariants and then
to the finite-slope part, the positive double-coset operator attached to
$z$ acts on each exponent space with the normalized eigenvalue
$\chi(z)\delta_P(z)^{-1}$.  What is \emph{not} automatic is a lower bound for
every coefficient of the individual group element $z$, or for every summand
in a double-coset average.  Cancellation in the Jacquet operator can hide
large individual summands.  Bridging this average-to-translate gap is one of
the genuine obstacles in passing from $(\mathrm{EC})$ to a bounded
$G$-orbit.

\begin{lemma}[Unitary central directions]
\label{lem:unitary-central-direction}
Suppose the central character $\omega_V:Z(G)\to E^\times$ is unitary.  Then
every $z\in Z(G)$ fixes every norm on $V$ under the induced action on norm
space.  In particular
\[
 d_\infty(\alpha,z\alpha)=0
\]
for every norm $\alpha$.
\end{lemma}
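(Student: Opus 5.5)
The plan is to compute the transformed norm $z\alpha$ directly from the definition of the action \eqref{eq:group-action} and show it coincides with $\alpha$. The one fact needed about $V$ is that $z\in Z(G)$ acts by the scalar $\omega_V(z)\in E^\times$. This is part of the hypothesis that $V$ has a central character. Nothing about the structure $\pi_{\mathrm{sm}}\otimes_E\sigma_{\mathrm{alg}}$ is used beyond this, so the statement is really about any representation with a unitary central character.

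The argument runs as follows. Fix a norm $\alpha$ on $V$ and $z\in Z(G)$. For $v\in V$,
\[
 (z\alpha)(v)=\alpha(z^{-1}v)=\alpha\bigl(\omega_V(z)^{-1}v\bigr)=|\omega_V(z)|^{-1}\alpha(v).
\]
The last step uses homogeneity of the seminorm, $\alpha(\lambda v)=|\lambda|\alpha(v)$ with $\lambda=\omega_V(z)^{-1}\in E$. Unitarity gives $|\omega_V(z)|=1$, and hence $|\omega_V(z^{-1})|=1$ as well. Therefore $z\alpha=\alpha$ pointwise on $V$. Substituting into \eqref{eq:distance}, every ratio $\alpha(v)/(z\alpha)(v)$ equals $1$, so $d_\infty(\alpha,z\alpha)=0$.

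I do not expect a real obstacle here; this is essentially the computation in Corollary~\ref{cor:center} run in reverse. The only point to check is the interpretation of ``central character''. If the paper intended $Z(G)$ only to act through generalized eigenvectors (for example, a non-semisimple action on a reducible $V$), the scalar identity would fail. In that case I would restrict to the stated hypothesis that $\omega_V$ exists as a character by which $Z(G)$ acts. Under that hypothesis the direct computation above settles the statement, including the conclusion that the fixed norm lies in every finite-distance component.
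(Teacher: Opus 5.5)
Your proof is correct and is essentially the paper's argument: $z$ acts by the scalar $\omega_V(z)$, and homogeneity of the norm together with $|\omega_V(z)|=1$ gives $z\alpha=\alpha$. The only cosmetic difference is that the paper computes $\alpha(zv)=|\omega_V(z)|\alpha(v)$, whereas you use the action convention $(z\alpha)(v)=\alpha(z^{-1}v)$ directly, which is the same computation applied to $z^{-1}$.
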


\begin{proof}
Since $z$ acts on $V$ by the scalar $\omega_V(z)$ and
$|\omega_V(z)|=1$, homogeneity of a norm gives
$\alpha(zv)=|\omega_V(z)|\alpha(v)=\alpha(v)$ for every $v$.
\end{proof}

Thus the statement that ``the unitary central character removes the central
direction'' means exactly that central cocharacters contribute zero metric
displacement.  They may therefore be factored out of the Cartan semigroup
without creating growth.  This does not remove noncentral negative
translation directions.

\begin{definition}[Jacquet-compatible finite transition model]
\label{def:Jacquet-compatible-transition-model}
A finite type--Bruhat quotient is called \emph{Jacquet-compatible} if, for
every directed cycle $C$, its total cost admits an identity
\begin{equation}\label{eq:cycle-cost-decomposition}
 \operatorname{cost}(C)
 =\sum_{r=1}^a
   v_E\!\left(\chi_r(z_r)\delta_{P_r}(z_r)^{-1}\right)
  +\sum_{t=1}^b \beta_t,
 \qquad \beta_t\geq0,
\end{equation}
where $z_r\in Z_{M_r}^+$ are positive Jacquet translations.  Central
factors are omitted from \eqref{eq:cycle-cost-decomposition}, since
Lemma~\ref{lem:unitary-central-direction} shows that their cost is zero.
The numbers $\beta_t$ record finite-Weyl, panel, or type-intertwining
reductions whose matrices preserve specified integral lattices.
\end{definition}

\begin{proposition}[Conditional passage from arithmetic to the finite graph]
\label{prop:Jacquet-compatible-no-negative-cycle}
Assume $(\mathrm{EC})$, a unitary central character, and the existence of a
Jacquet-compatible finite transition model.  Then the finite graph has no
negative directed cycle.  Hence it admits a potential and yields the Cartan
bounds of Corollary~\ref{cor:finite-state-Cartan-sufficient}.
\end{proposition}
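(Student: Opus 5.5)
The argument is a direct bookkeeping one; the real content sits in the hypothesis of a Jacquet-compatible model, and we only have to unpack it. Fix a directed cycle $C$ in the finite quotient $\overline\Gamma$. By Definition~\ref{def:Jacquet-compatible-transition-model}, its cost has a decomposition \eqref{eq:cycle-cost-decomposition}. This is a sum of Jacquet terms $v_E(\chi_r(z_r)\delta_{P_r}(z_r)^{-1})$ and correction terms $\beta_t\geq0$. Here each $z_r\in Z_{M_r}^+$ is a positive Jacquet translation, and each $\chi_r$ is an exponent of $J_{P_r}(V)$; I read the definition as requiring the latter, and I would state this explicitly if it is not already implicit. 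Central contributions have already been discarded. Their discarding is legitimate because Lemma~\ref{lem:unitary-central-direction} shows, via unitarity of the central character, that central elements contribute zero displacement and therefore zero cost.

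The first step is to apply Lemma~\ref{lem:EC-spectral-translation} term by term. Under $(\mathrm{EC})$ it gives $v_E(\chi_r(z_r)\delta_{P_r}(z_r)^{-1})\geq0$ for every $r$. Adding these to the nonnegative $\beta_t$ gives $\operatorname{cost}(C)\geq0$. Since $C$ was arbitrary, $\overline\Gamma$ has no negative directed cycle.

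The second step invokes Proposition~\ref{prop:finite-negative-cycle-criterion}. It produces a potential $\bar x$ on $V(\overline\Gamma)$ satisfying \eqref{eq:finite-difference-system}. Its lift $x_\nu=\bar x(\pi(\nu))$ then satisfies the edge inequalities \eqref{eq:edge-difference-inequality-general} for every chosen Cartan generator and every required inverse direction, by the descent inequality \eqref{eq:finite-quotient-cost-descent}.

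The final step uses Lemma~\ref{lem:graph-potential-max-norm}, which makes these generators act isometrically on $\alpha_x$. This is exactly the input for Corollary~\ref{cor:finite-state-Cartan-sufficient}, which yields the stated Cartan bounds. It also yields the invariant norm, once compact invariance of the norm is supplied as in that corollary.

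The only delicate point is the first step. One must make sure that the characters appearing in \eqref{eq:cycle-cost-decomposition} are genuinely exponents, with the same normalisation $\chi(z)\delta_P(z)^{-1}$ as in Definition~\ref{def:Emerton-condition}, so that $(\mathrm{EC})$ applies to them. Everything else is formal. I would state clearly that the proposition proves only the absence of negative cycles together with its formal consequences. It does not prove the existence of a Jacquet-compatible model, which remains the substantive arithmetic input.
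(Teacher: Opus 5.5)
Your proposal is correct and follows the paper's proof essentially verbatim. The Jacquet terms are nonnegative by Lemma~\ref{lem:EC-spectral-translation} and $\beta_t\geq0$, so every cycle has nonnegative cost; Proposition~\ref{prop:finite-negative-cycle-criterion} and Corollary~\ref{cor:finite-state-Cartan-sufficient} then finish, and your remark that the $\chi_r$ in \eqref{eq:cycle-cost-decomposition} must be genuine exponents with the $\delta_P^{-1}$ normalisation is a fair point left implicit in Definition~\ref{def:Jacquet-compatible-transition-model}.
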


\begin{proof}
Every term in the first sum of
\eqref{eq:cycle-cost-decomposition} is nonnegative by
Lemma~\ref{lem:EC-spectral-translation}; every $\beta_t$ is nonnegative by
definition.  Hence every cycle has nonnegative total cost.  Apply
Proposition~\ref{prop:finite-negative-cycle-criterion} and then
Corollary~\ref{cor:finite-state-Cartan-sufficient}.
\end{proof}

The proposition deliberately isolates the missing representation-theoretic
input.  For a general connected reductive group, $(\mathrm{EC})$ does not
construct a finite type--Bruhat quotient, does not prove the uniform lower
bound \eqref{eq:finite-quotient-cost-descent}, and does not prove the exact
cycle decomposition \eqref{eq:cycle-cost-decomposition}.  Even when stable
lattices are known in Jacquet or type modules, one must still lift them to
$V$ and prove compatibility with a single commensurability constant at all
compact-open levels.  These are the obstacles to turning the preceding
conditional proposition into a general sufficiency theorem.

\section{\texorpdfstring{The case $G=\GL_n(F)$}
{The case G=GLn(F)}}
\label{sec:GLn}

Throughout this section
\[
 G=\GL_n(F),
 \qquad
 V=\operatorname{BS}(r)
 =\pi_{\mathrm{gen}}(r)\otimes_E\pi_{\mathrm{alg}}(r),
\]
where $r:G_F\to\GL_n(E)$ is potentially semistable.  Fontaine's module associated to $r$ is equipped with Frobenius, monodromy $N$, and descent data. From that data we extract the Weil--Deligne representation
\[
 D=\mathrm{WD}(r)=(\tau,N),
\]
with the chosen Frobenius-semisimplification convention.

We use the $p$-adic local Langlands normalization appearing in
\cite{Pyvovarov2021}: if $\operatorname{rec}_p$ is the $p$-adic form of the
classical local Langlands correspondence, put
\[
 r_p(\pi)=\operatorname{rec}_p\bigl(\pi\otimes|\det|_F\bigr),
 \qquad
 \pi_{\mathrm{sm}}(r)
 =r_p^{-1}\bigl(\mathrm{WD}(r)^{F\text{-}\mathrm{ss}}\bigr).
\]
The representation $\pi_{\mathrm{gen}}(r)$ is the unique standard module,
formed using \emph{normalized} parabolic induction, which surjects onto
$\pi_{\mathrm{sm}}(r)$.  Accordingly, throughout the rest of the paper
\[
 \pi_1\times\cdots\times\pi_t
 :=i_P^{\GL_n(F)}(\pi_1\boxtimes\cdots\boxtimes\pi_t)
\]
means normalized parabolic induction.

Decompose $D$ into indecomposable Weil--Deligne summands
\begin{equation}\label{eq:WD-indecomposable-decomposition}
 D=\bigoplus_{i=1}^t D_i,
 \qquad
 D_i\simeq D(\tau_i,\ell_i)
 =\tau_i\oplus\tau_i(1)\oplus\cdots\oplus\tau_i(\ell_i-1),
\end{equation}
where $\tau_i$ is irreducible and monodromy maps successive twists along one
Jordan chain.  Let $\rho_i$ be the supercuspidal representation corresponding
to $\tau_i$ under this normalization and write $\nu=|\det|_F$.  The summand
$D_i$ determines the segment
\begin{equation}\label{eq:WD-to-segment}
 \Delta_i=[\rho_i,\rho_i\nu,\ldots,\rho_i\nu^{\ell_i-1}],
\end{equation}
up to an overall unramified twist absorbed into $\rho_i$.  After ordering the
segments in standard order,
\begin{equation}\label{eq:pi-gen-from-WD}
 \pi_{\mathrm{gen}}(r)
 =Q(\Delta_1)\times\cdots\times Q(\Delta_t),
\end{equation}
where the symbol $\times$ has the normalized meaning fixed above.

For the algebraic factor, assume that $E$ contains the images of all
embeddings $\kappa:F\hookrightarrow E$.  Write the $\kappa$-Hodge--Tate weights as
\[
 \operatorname{HT}_\kappa(r)
 =\{i_{\kappa,1}<\cdots<i_{\kappa,n}\}.
\]
Relative to the upper-triangular Borel, define the dominant highest weight
\begin{equation}\label{eq:BS-highest-weight}
 \xi_\kappa
 =\bigl(\xi_{\kappa,1},\ldots,\xi_{\kappa,n}\bigr),
 \qquad
 \xi_{\kappa,j}=-i_{\kappa,j}+j-1.
\end{equation}
Let $\pi_{alg, \kappa}(r)$ be the irreducible algebraic representation with highest
weight $\xi_\kappa$,and
\begin{equation}\label{eq:pi-alg-from-HT}
 \pi_{\mathrm{alg}}(r)
 =\bigotimes_{\kappa:F\hookrightarrow E}\pi_{alg, \kappa}(r),
\end{equation}
with diagonal $\GL_n(F)$-action. Observe that the highest weight of $\pi_{\mathrm{alg}}(r)$ with respect to the upper triangular matrices is given by $ \mathrm{diag}(x_1,..., x_n) \mapsto \prod_{j=1}^{n}\prod_{\kappa:F\hookrightarrow E} \kappa(x_j^{\xi_{\kappa,j}})$

\subsection{Multisegments and semisimple types}

Let
\[
 \mathfrak m=\{\Delta_1,\ldots,\Delta_t\}
\]
be the Bernstein--Zelevinsky multisegment attached to the Weil--Deligne
representation of \(r\).  With our convention,
\begin{equation}\label{eq:multisegment-parabolic-induction}
 \pi_{\mathrm{gen}}(r)
 =
 Q(\Delta_1)\times\cdots\times Q(\Delta_t),
\end{equation}
the normalized parabolic induction of the segment representations.

Group the segments according to their cuspidal line:
\[
 \mathfrak m=\coprod_{\rho}\mathfrak m_\rho.
\]
The semisimple-type theory of Bushnell--Kutzko identifies the Bernstein block
with the module category of a tensor product of affine Hecke algebras, one
factor for each cuspidal line; parabolic induction and Jacquet functors are
compatible with the corresponding standard maps of Hecke algebras
\cite{BushnellKutzko1999}.

Fix a semisimple type
\[
 (J,\lambda)
\]
for the block of \(\pi_{\mathrm{gen}}(r)\), put
\[
 \mathcal P_\lambda=\cInd_J^G\lambda,
 \qquad
 \mathcal H=\End_G(\mathcal P_\lambda)^{\mathrm{op}},
\]
and let
\[
 M(\mathfrak m)=\Hom_J(\lambda,\pi_{\mathrm{gen}}(r)).
\]
Then evaluation gives
\begin{equation}\label{eq:type-evaluation-multisegment}
 \mathcal P_\lambda\otimes_{\mathcal H}M(\mathfrak m)
 \longrightarrow
 \pi_{\mathrm{gen}}(r).
\end{equation}
In the Bernstein block under consideration this is an isomorphism.

For clarity, first suppose all segments lie on one cuspidal line of a
supercuspidal representation \(\rho\) of \(\GL_d(F)\):
\[
 \Delta_i=
 [\rho\nu^{a_i},\rho\nu^{a_i+1},\ldots,\rho\nu^{b_i}],
 \qquad
 \ell_i=b_i-a_i+1.
\]
Put
\[
 N=\ell_1+\cdots+\ell_t,
 \qquad
 W=S_N,
 \qquad
 W_P=S_{\ell_1}\times\cdots\times S_{\ell_t}.
\]
The relevant affine Hecke algebra is of type \(A_{N-1}\), with parameter
\(q_\rho\), and the parabolic subalgebra is
\[
 \mathcal H_P
 \simeq
 \mathcal H_{\ell_1}\otimes\cdots\otimes\mathcal H_{\ell_t}.
\]
Each segment factor determines its sign/discrete-series character
\(\chi_{\Delta_i}\).

More precisely, choose Bernstein translation generators
$X_{i,1}^{\pm1},\ldots,X_{i,\ell_i}^{\pm1}$ in the affine Hecke factor
$\mathcal H_{\ell_i}$ attached to the cuspidal line of $\rho_i$.  The
segment $\Delta_i$ determines the one-dimensional sign/discrete-series
character
\begin{equation}\label{eq:segment-Hecke-character-definition}
 \chi_{\Delta_i}:\mathcal H_{\ell_i}\longrightarrow E
\end{equation}
characterized, in the normalization $(T_s+1)(T_s-q_\rho)=0$, by
\[
 \chi_{\Delta_i}(T_s)=-1
\]
for every finite simple reflection inside the $i$th segment and by
\begin{equation}\label{eq:segment-Bernstein-values-general}
 \chi_{\Delta_i}(X_{i,j})
 =z_{\Delta_i}q_\rho^{-(j-1)},
 \qquad 1\leq j\leq\ell_i,
\end{equation}
where $z_{\Delta_i}=\chi_{\Delta_i}(X_{i,1})$ records the initial
unramified parameter of the segment.  Equivalently, this is the affine-Hecke
character corresponding to the essentially square-integrable representation
$Q(\Delta_i)$ under the type equivalence.

The external tensor product
\[
 \chi_{\Delta_1}\boxtimes\cdots\boxtimes\chi_{\Delta_t}
 :\mathcal H_{\ell_1}\otimes\cdots\otimes\mathcal H_{\ell_t}
 \longrightarrow E
\]
is defined on pure tensors by
\begin{equation}\label{eq:external-segment-character}
 (\chi_{\Delta_1}\boxtimes\cdots\boxtimes\chi_{\Delta_t})
 (h_1\otimes\cdots\otimes h_t)
 =\prod_{i=1}^t\chi_{\Delta_i}(h_i),
\end{equation}
and extended $E$-linearly.  We abbreviate this character to
$\chi_{\mathfrak m}$.

Thus the standard multisegment Hecke module is
\begin{equation}\label{eq:multisegment-Hecke-standard-module}
 M(\mathfrak m)
 =
 \mathcal H_N
 \otimes_{\mathcal H_P}
 \left(
 \chi_{\Delta_1}\boxtimes\cdots\boxtimes\chi_{\Delta_t}
 \right).
\end{equation}
This is the affine-Hecke counterpart of
\eqref{eq:multisegment-parabolic-induction}; see also the multisegment
description of affine type-\(A\) Hecke modules \cite{Vazirani2001}.

Let \(W^P\) be the minimal right-coset representatives for \(W/W_P\).

A permutation $w\in S_N$ is called an
$({\ell_1},\ldots,{\ell_t})$-shuffle if it preserves the relative order
inside each consecutive block
\[
 I_j=\{\ell_1+\cdots+\ell_{j-1}+1,\ldots,
       \ell_1+\cdots+\ell_j\};
\]
equivalently,
\[
 a<b,\quad a,b\in I_j
 \quad\Longrightarrow\quad
 w(a)<w(b).
\]
These permutations are exactly the minimal right-coset representatives
$W^P$ for $W/W_P$.  The vectors
\begin{equation}\label{eq:shuffle-basis}
 m_w=T_w\otimes1,
 \qquad
 w\in W^P,
\end{equation}
form the natural finite shuffle basis of \(M(\mathfrak m)\).

\subsection{The finite shuffle matrices}

We use the following Bernstein presentation.  For the type-$A_{N-1}$ affine
Hecke algebra $\mathcal H_N$, let $T_1,\ldots,T_{N-1}$ be the finite Hecke
generators and let $X_1^{\pm1},\ldots,X_N^{\pm1}$ be commuting Bernstein
translation generators.

\begin{definition}[Bernstein translation monomials and Bernstein monomials]
\label{def:Bernstein-translation-monomial}
Identify the coweight lattice of the diagonal torus with
\[
 X_*(T)\simeq\mathbb Z^N.
\]
For a coweight
\[
 \eta=(\eta_1,\ldots,\eta_N)\in X_*(T),
\]
the element
\begin{equation}\label{eq:Bernstein-translation-monomial}
 X^\eta
 :=
 X_1^{\eta_1}\cdots X_N^{\eta_N}
\end{equation}
in the commutative Bernstein torus subalgebra
\[
 \mathcal A
 =
 E[X_1^{\pm1},\ldots,X_N^{\pm1}]
 \subseteq \mathcal H_N
\]
is called the \emph{Bernstein translation monomial} of exponent \(\eta\).
In root-theoretic notation we write
\[
 \Theta_\eta:=X^\eta
\]
and call the same element the \emph{Bernstein monomial} attached to
\(\eta\).  Thus, with the normalization used in this paper, ``Bernstein
translation monomial'' and ``Bernstein monomial'' refer to the same element
of \(\mathcal A\); the first terminology emphasizes its interpretation as a
translation, while the second is the root-theoretic notation used in the
Bernstein presentation.  In particular,
\[
 \Theta_\eta\Theta_{\eta'}
 =
 \Theta_{\eta+\eta'},
 \qquad
 \Theta_0=1,
 \qquad
 \Theta_\eta^{-1}=\Theta_{-\eta}.
\]
For a product of affine-Hecke factors, the definition is applied factor by
factor to the corresponding tuple of coweights.
\end{definition}

They satisfy
\[
 (T_i+1)(T_i-q_\rho)=0,
\]
the usual braid relations,
\[
 X_jX_k=X_kX_j,
 \qquad
 T_iX_j=X_jT_i\quad(j\neq i,i+1),
\]
and
\begin{equation}\label{eq:Bernstein-type-A-cross-relation}
 T_iX_iT_i=q_\rho X_{i+1}.
\end{equation}
Equivalently, in root-theoretic notation, for a simple reflection $s=s_\alpha$
and a Bernstein monomial $\Theta_\eta$ one has
\begin{equation}\label{eq:Bernstein-cross-relation-root-form}
 T_s\Theta_\eta-\Theta_{s\eta}T_s
 =(q_\rho-1)
 \frac{\Theta_\eta-\Theta_{s\eta}}
 {1-\Theta_{-\alpha^\vee}}.
\end{equation}
The quotient on the right is a finite Laurent polynomial whenever
$\eta-s\eta$ is an integral multiple of $\alpha^\vee$.  The elements
$\{T_w\Theta_\eta:w\in S_N,\eta\in\mathbb Z^N\}$ form an $E$-basis of
$\mathcal H_N$.  This is the Bernstein presentation used below; explicit
conversion to the Iwahori--Matsumoto basis is described in
\cite{HainesPettet2002}.

Fix the Bernstein presentation of the affine Hecke algebra, with translation
elements \(\Theta_\eta\).  For \(h\in\mathcal H_N\) and \(w\in W^P\), write
the parabolic Bernstein normal form
\begin{equation}\label{eq:parabolic-Bernstein-normal-form}
 T_wh
 =
 \sum_{\substack{w'\in W^P\\u\in W_P\\\eta}}
 c_{w',u,\eta}(w,h)\,
 T_{w'}\Theta_\eta T_u.
\end{equation}
The coefficients \(c_{w',u,\eta}(w,h)\) are explicit affine-Hecke structure
constants.  Formulas relating the Bernstein and Iwahori--Matsumoto
presentations may be used to compute them recursively
\cite{HainesPettet2002}.

Since the internal finite-Hecke generators act by the sign character on each
segment,
\[
 \chi_{\Delta_i}(T_s)=-1
\]
for the simple reflections internal to that segment.  Therefore the matrix
of \(h\) in the shuffle basis is
\begin{equation}\label{eq:finite-shuffle-matrix}
 C^{\mathfrak m}_{w',w}(h)
 =
 \sum_{u,\eta}
 c_{w',u,\eta}(w,h)\,
 \chi_{\mathfrak m}(\Theta_\eta)
 (-1)^{\ell(u)},
\end{equation}
where
\[
 \chi_{\mathfrak m}
 =
 \chi_{\Delta_1}\boxtimes\cdots\boxtimes\chi_{\Delta_t}
\]
on the parabolic Bernstein subalgebra.

Thus $C^{\mathfrak m}(h)$ is a completely finite matrix.  More
precisely, there is a useful block matrix before applying the segment
characters.  Write the finite free right $\mathcal H_P$-module as
\[
 \mathcal H_N
 =\bigoplus_{w\in W^P}T_w\mathcal H_P.
\]
For $h\in\mathcal H_N$, right multiplication followed by Bernstein--Bruhat
straightening has a block matrix
\begin{equation}\label{eq:pre-specialization-block-matrix}
 \widetilde C(h)
 =\bigl(B_{w',w}(h)\bigr)_{w',w\in W^P},
 \qquad
 B_{w',w}(h)\in\mathcal H_P,
\end{equation}
characterized by
\[
 T_wh=\sum_{w'\in W^P}T_{w'}B_{w',w}(h).
\]
The \emph{outer} block indices $w,w'$ record the inter-segment shuffle
reduction.  Each \emph{inner block}
$B_{w',w}(h)$ lies in the tensor product
$\mathcal H_P=\bigotimes_i\mathcal H_{\ell_i}$ and therefore contains the
internal reductions inside the individual segments.  Applying the external
segment character entrywise gives the scalar shuffle matrix
\begin{equation}\label{eq:finite-shuffle-block-specialization}
 C^{\mathfrak m}(h)
 =\bigl(\chi_{\mathfrak m}(B_{w',w}(h))\bigr)_{w',w\in W^P}.
\end{equation}

\subsection{The universal coset basis and the global reduction matrix}

Choose a \(J\)-stable lattice
\[
 \lambda^\circ\subseteq\lambda
\]
and an \(\Ocoeff\)-basis
\[
 u_1,\ldots,u_r.
\]
Choose representatives \(\mathscr R\) for \(J\backslash G\).  For
\(x\in\mathscr R\) and \(1\leq a\leq r\), let
\[
 \phi_{x,a}\in\mathcal P_\lambda
\]
be supported on \(Jx\) and satisfy
\[
 \phi_{x,a}(x)=u_a.
\]
Then
\[
 \{\phi_{x,a}\}_{x,a}
\]
is a Hamel basis of \(\mathcal P_\lambda\).
Indeed, an element $f\in\cInd_J^G\lambda$ is a function
$f:G\to\lambda$ satisfying $f(jg)=\lambda(j)f(g)$ and having compact
support modulo $J$.  Since $J$ is open, compact support modulo $J$ meets
only finitely many left cosets $Jx$ with $x\in\mathscr R$.  On such a coset,
$f$ is determined uniquely by its value
\[
 f(x)=\sum_{a=1}^r c_{x,a}u_a.
\]
The equivariance relation then gives
\[
 f|_{Jx}=\sum_{a=1}^r c_{x,a}\phi_{x,a}|_{Jx}.
\]
Summing over the finitely many cosets meeting the support yields the finite
expansion
\[
 f=\sum_{x,a}c_{x,a}\phi_{x,a},
\]
so the family spans.  If a finite linear combination
$\sum_{x,a}c_{x,a}\phi_{x,a}$ is zero, restriction to one coset $Jx$ and
evaluation at $x$ gives $\sum_a c_{x,a}u_a=0$; since the $u_a$ form a basis,
all $c_{x,a}$ vanish.  Thus the family is linearly independent and hence a
Hamel basis.

The natural spanning vectors of \(\pi_{\mathrm{gen}}(r)\) are
\begin{equation}\label{eq:q-xaw}
 q_{x,a,w}
 =
 \phi_{x,a}\otimes m_w,
 \qquad
 x\in\mathscr R,\quad
 1\leq a\leq r,\quad
 w\in W^P.
\end{equation}
They are usually not independent.

For \(h\in\mathcal H\), write
\begin{equation}\label{eq:Hecke-on-universal-basis}
 \phi_{x,a}h
 =
 \sum_{y,b}
 H^h_{(y,b),(x,a)}\phi_{y,b}.
\end{equation}
The tensor relation
\[
 (\phi h)\otimes m
 =
 \phi\otimes(hm)
\]
and \eqref{eq:finite-shuffle-matrix} give the explicit relations
\begin{equation}\label{eq:master-reduction-relation}
 \sum_{y,b}
 H^h_{(y,b),(x,a)}
 q_{y,b,w}
 =
 \sum_{w'\in W^P}
 C^{\mathfrak m}_{w',w}(h)
 q_{x,a,w'}.
\end{equation}

Let
\[
 \Xi=\mathscr R\times\{1,\ldots,r\}\times W^P
\]
be the set of \emph{spanning indices}; thus
$\xi=(x,a,w)\in\Xi$ labels the spanning vector $q_\xi=q_{x,a,w}$.

\begin{definition}[Affine--Bruhat shelling]
\label{def:affine-Bruhat-shelling}
Choose an Iwahori subgroup $I\subseteq G$ and a proper length function
$\ell_*:\widetilde W\to\mathbb Z_{\geq0}$ on the extended affine Weyl group,
obtained from the affine Coxeter length together with any proper norm on the
central translation lattice.  For $x\in\mathscr R$, define
\[
 h(x)=\min\{\ell_*(w):Jx\cap IwI\neq\varnothing\}.
\]
An \emph{affine--Bruhat shelling} is the resulting filtration of spanning
indices by the integer $h(x)$, together with a fixed total ordering inside
each shell.  Because $\ell_*$ is proper and the residue field is finite, each
shell contains only finitely many $J$-cosets.  We write $h(\xi)=h(x)$ for
$\xi=(x,a,w)$.
\end{definition}

\begin{definition}[Deterministic pivot rule]
\label{def:deterministic-pivot-rule}
Order $\Xi$ first by shell and then by the chosen order inside the shell.
A \emph{deterministic pivot rule} is a row-echelon choice of a basis of the
relation space generated by \eqref{eq:master-reduction-relation} such that:
\begin{enumerate}
 \item every chosen relation has a unique pivot index $\xi$ with nonzero
 coefficient;
 \item after solving for $q_\xi$, only indices of strictly smaller order
 occur on the right;
 \item no pivot index occurs as the pivot of two chosen relations.
\end{enumerate}
The rule is deterministic once the ordering of relations and the usual
row-reduction tie-breaking convention are fixed.  Indices which never occur
as pivots are called \emph{nonpivot indices}.
\end{definition}

\begin{lemma}[Triangular reduction]
\label{lem:triangular-reduction-normal-form}
Assume the chosen row-echelon relations span the full kernel of the map from
the free span of the $q_\xi$ to
$\mathcal P_\lambda\otimes_{\mathcal H}M(\mathfrak m)$.  Then repeated pivot
reduction terminates, every spanning vector has a unique expression in the
nonpivot vectors, and the nonpivot vectors form a Hamel basis of the
quotient.
\end{lemma}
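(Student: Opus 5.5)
This is essentially Gaussian elimination on an infinite but well-ordered index set, so I would argue by well-founded induction. Let $F$ be the free $E$-vector space on symbols $[\xi]$, $\xi\in\Xi$, and let $\Phi:F\to\mathcal P_\lambda\otimes_{\mathcal H}M(\mathfrak m)$ send $[\xi]\mapsto q_\xi$. The map $\Phi$ is surjective: the $\phi_{x,a}$ form a Hamel basis of $\mathcal P_\lambda$, the $m_w$ span $M(\mathfrak m)$, and pure tensors span the tensor product. By hypothesis the chosen relations $\rho_\xi$, indexed by their pivots $\xi\in\Xi_{\mathrm{piv}}$, span $\ker\Phi$. Write each as $\rho_\xi=c_\xi[\xi]-\sum_{\eta<\xi}d_{\xi\eta}[\eta]$ with $c_\xi\neq0$ and finitely many $\eta$.

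\textbf{Termination.} The order on $\Xi$ compares first the shell $h$, then the fixed total order inside a shell. Each shell is finite by Definition~\ref{def:affine-Bruhat-shelling}, since $\mathscr R$ meets finitely many $J$-cosets per shell and $r$ and $W^P$ are finite. Hence the order has type at most $\omega$ and is a well-order. Each reduction step replaces $[\xi]$, for $\xi$ a pivot, by a finite combination of strictly smaller indices. To see that repeated reduction stops, track the multiset of indices occurring in the current finite combination. It decreases in the multiset (Dershowitz--Manna) extension of the well-order, which is again well-founded. Alternatively, argue directly by induction on position: every index below $\xi$ lies in finitely many shells, hence in a finite set, so reducing all pivots in that finite set terminates. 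So every $[\xi]$ is congruent modulo $\operatorname{span}\{\rho_\eta\}$ to a finite combination of nonpivot symbols, and the $\Phi$-images of the nonpivot symbols span the quotient.

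\textbf{Independence.} This is the key step. Suppose $\sum_{\zeta\ \mathrm{nonpivot}}a_\zeta q_\zeta=0$. Then $\sum a_\zeta[\zeta]\in\ker\Phi$, so it equals a finite combination $\sum_{\xi\in S}b_\xi\rho_\xi$ with $S$ a finite set of pivots. If $S\neq\varnothing$, let $\xi_{\max}$ be the largest element of $S$. The coefficient of $[\xi_{\max}]$ on the right is $b_{\xi_{\max}}c_{\xi_{\max}}$. No other $\rho_\xi$ with $\xi\in S$ contributes to it, because other relations involve $[\xi_{\max}]$ only if their pivot is larger (condition (2)), and pivots are distinct (condition (3)). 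On the left the coefficient is $0$, since $\xi_{\max}$ is a pivot and not a nonpivot index. Hence $b_{\xi_{\max}}=0$, and descending induction gives $S=\varnothing$ and all $a_\zeta=0$. Thus the nonpivot vectors are a Hamel basis.

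\textbf{Uniqueness.} The uniqueness of the expression of each spanning vector follows immediately from independence.

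I expect the only delicate points to be the well-ordering, which relies on finiteness of the shells, and the use of conditions (2) and (3) to ensure that the top coefficient is isolated. The spanning hypothesis on the relations is assumed, so it needs no proof here.
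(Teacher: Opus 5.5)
Your proposal is correct and follows the same route as the paper. Termination comes from the well-founded shell/order complexity, existence of normal forms from the spanning hypothesis, and uniqueness and independence from the fact that any nonzero element of the relation space has its largest index equal to a pivot; you simply spell out what the paper leaves implicit, namely that finiteness of the shells makes the order a well-order and that conditions (2)--(3) of the pivot rule isolate the top coefficient.
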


\begin{proof}
Every reduction strictly decreases the well-founded shell/order complexity,
so an infinite reduction chain is impossible.  The spanning property gives
existence of a normal form.  If two normal forms represented the same vector,
their difference would be a nonzero linear combination of nonpivot vectors
lying in the relation space.  This contradicts the row-echelon property,
since every nonzero vector in the relation space has a largest pivot index.
Hence the normal form is unique and the nonpivot classes are a basis.
\end{proof}

Choose such an affine--Bruhat shelling and deterministic pivot rule for the
relations \eqref{eq:master-reduction-relation}.  Let
\[
 \mathscr N
\]
be the nonpivot indices.  We take
\[
 e_\mu=q_\mu,\qquad\mu\in\mathscr N,
\]
as our type--Bruhat Hamel basis.  Every spanning vector has a unique
normal-form expansion
\begin{equation}\label{eq:global-reduction-matrix}
 q_\xi
 =
 \sum_{\mu\in\mathscr N}
 R_{\mu;\xi}e_\mu.
\end{equation}
The matrix \(R\) is the global reduction matrix.

It is computed recursively from the finite matrices
\(H^h\) and \(C^{\mathfrak m}(h)\) appearing in
\eqref{eq:master-reduction-relation}.

\subsection{Path formula for the global reduction matrix}

We now formalize the path expansion implicit in the deterministic reduction
procedure.

\begin{definition}[Reduction complexity and reduction graph]
\label{def:global-reduction-graph}
For a spanning index \(\xi=(x,a,w)\in\Xi\), define its
\emph{reduction complexity} by
\[
 \operatorname{comp}(\xi)
 =
 \bigl(h(\xi),p(\xi)\bigr),
\]
ordered lexicographically, where \(h(\xi)\) is the shell number from
Definition~\ref{def:affine-Bruhat-shelling} and \(p(\xi)\) is the position of
\(\xi\) in the fixed deterministic ordering inside that shell.

Whenever a chosen row-echelon relation
\eqref{eq:master-reduction-relation} is solved for its pivot \(q_\xi\), draw a
directed edge
\[
 e:\xi\longrightarrow\xi'
\]
for every lower-complexity term \(q_{\xi'}\) occurring on the right-hand
side.  The resulting directed graph on \(\Xi\) is called the
\emph{global reduction graph} and is denoted by \(\Gamma_{\mathrm{red}}\).
Its terminal vertices are the nonpivot indices \(\mathscr N\).
\end{definition}

\begin{lemma}[Acyclicity and termination]
\label{lem:global-reduction-graph-acyclic}
The graph \(\Gamma_{\mathrm{red}}\) is acyclic.  Every directed path is
finite and terminates at a vertex of \(\mathscr N\).  Consequently repeated
pivot reduction terminates, and the terminal normal form is independent of
all reduction choices.
\end{lemma}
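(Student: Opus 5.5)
The proof reduces everything to the strict decrease of the reduction complexity along edges, combined with well-foundedness of the lexicographic order on complexities. By Definition~\ref{def:global-reduction-graph}, an edge \(\xi\to\xi'\) is drawn only when \(q_\xi\) is the pivot of a chosen relation and \(q_{\xi'}\) occurs on its right-hand side. Condition (2) of Definition~\ref{def:deterministic-pivot-rule} then gives \(\operatorname{comp}(\xi')<\operatorname{comp}(\xi)\) in the lexicographic order on \(\mathbb Z_{\ge0}\times\mathbb Z_{\ge0}\). A directed cycle would therefore force \(\operatorname{comp}(\xi)<\operatorname{comp}(\xi)\), so the graph is acyclic.

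For finiteness of paths, note that each shell contains only finitely many \(J\)-cosets (Definition~\ref{def:affine-Bruhat-shelling}), hence finitely many indices, since \(r\) and \(W^P\) are finite. The position \(p(\xi)\) inside a shell therefore ranges over a finite initial segment of \(\mathbb Z_{\ge0}\). The set of complexities, ordered lexicographically, is then order-isomorphic to a subset of \(\omega\) (shells laid end to end) and in particular is well-founded. A directed path has strictly decreasing complexities, so it cannot be infinite; concretely, a path starting at \(\xi\) has length at most the number of indices of complexity below \(\operatorname{comp}(\xi)\), which is finite.

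A maximal path ends at a vertex with no outgoing edge. Such a vertex is either not a pivot, hence lies in \(\mathscr N\), or is a pivot whose relation has no lower-complexity terms. In the second case the relation reads \(c\,q_\xi=0\) with \(c\neq0\), so \(q_\xi=0\). I would treat such an index as terminating with the zero normal form, interpreting its empty expansion as a vertex of \(\mathscr N\) with zero coefficient, or else note that it contributes nothing. This degenerate case is the one point needing care in the phrase ``terminates at a vertex of \(\mathscr N\)''.

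Termination of repeated pivot reduction then follows by well-founded induction on \(\operatorname{comp}\). Each reduction step replaces a pivot vector by finitely many vectors of strictly smaller complexity, and since the relevant edge sets are finite, König's lemma combined with the finiteness of paths shows that the whole reduction tree of any \(q_\xi\) is finite. Independence of choices, the main conceptual step, I would deduce from Lemma~\ref{lem:triangular-reduction-normal-form}. Every sequence of reductions yields an expression of \(q_\xi\) in the nonpivot vectors that agrees with \(q_\xi\) modulo the relation space, and that lemma guarantees uniqueness of such an expression, under its standing hypothesis that the chosen relations span the kernel. Hence the terminal normal form is exactly \(\sum_\mu R_{\mu;\xi}e_\mu\) in \eqref{eq:global-reduction-matrix}, whatever order the reductions were performed in.
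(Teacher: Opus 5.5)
Your proposal is correct and follows the paper's argument: condition (2) of the pivot rule makes \(\operatorname{comp}\) strictly decrease along edges, which gives acyclicity and finiteness of paths; sinks are identified with nonpivot indices; and independence of the reduction order comes from the uniqueness in Lemma~\ref{lem:triangular-reduction-normal-form}. Your additions are refinements rather than a different route: well-foundedness via finite shells, König's lemma for the reduction tree, and the degenerate pivot relation \(c\,q_\xi=0\), which the paper's one-line identification of terminal vertices with \(\mathscr N\) silently passes over.
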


\begin{proof}
By Definition~\ref{def:deterministic-pivot-rule}, every term occurring after
a pivot is solved has strictly smaller shell/order complexity.  Hence
\(\operatorname{comp}(\xi')<\operatorname{comp}(\xi)\) along every directed
edge \(\xi\to\xi'\), so a directed cycle is impossible and every directed
path is finite.  A terminal vertex is precisely an index which is never
chosen as a pivot, hence belongs to \(\mathscr N\).  Independence of the
terminal expression follows from the uniqueness of normal form in
Lemma~\ref{lem:triangular-reduction-normal-form}.
\end{proof}

\begin{definition}[Edge data and reduction paths]
\label{def:global-reduction-path}
Let \(e:\xi\to\xi'\) be an edge of
\(\Gamma_{\mathrm{red}}\).  After solving the pivot relation, write the
coefficient of \(q_{\xi'}\) as the product of
\begin{enumerate}
 \item a pure Bernstein translation \(\Theta_{\eta(e)}\);
 \item a scalar factor made from signs and nonnegative powers of the affine
 Hecke parameters \(q_\rho\) and \(q_\rho-1\);
 \item a type-intertwining matrix \(U(e)\).
\end{enumerate}
The translation exponent \(\eta(e)\) lies in the coweight lattice of the
relevant affine-Hecke factor.  After choosing an integral normalization of
the type, we require
\[
 U(e)\in M_r(\Ocoeff).
\]

A \emph{reduction path} is a directed path
\[
 \gamma:
 \xi=\xi_0\longrightarrow\xi_1\longrightarrow\cdots
 \longrightarrow\xi_s=\mu,
 \qquad \mu\in\mathscr N.
\]
If \(e_k:\xi_{k-1}\to\xi_k\) is its \(k\)-th edge, define
\begin{equation}\label{eq:path-C-and-eta}
 C(\gamma)=c(e_s)\cdots c(e_1),
 \qquad
 \eta(\gamma)=\sum_{k=1}^s\eta(e_k),
\end{equation}
where \(c(e_k)\) is the coefficient with the pure Bernstein translation
removed.  At block level the product is ordered matrix multiplication; a
scalar coefficient is obtained by taking the relevant matrix entry at the
end.
\end{definition}

\begin{proposition}[Finite path formula for the global reduction matrix]
\label{prop:global-reduction-path-formula}
For every spanning index \(\xi\in\Xi\) and every nonpivot
\(\mu\in\mathscr N\), only finitely many reduction paths
\(\gamma:\xi\leadsto\mu\) occur, and
\begin{equation}\label{eq:R-gallery-formula}
 R_{\mu;\xi}
 =
 \sum_{\gamma:\xi\leadsto\mu}
 C(\gamma)\,
 \chi_{\mathfrak m}
 \bigl(\Theta_{\eta(\gamma)}\bigr).
\end{equation}
After separating the integral type matrices and the affine-Hecke scalars,
this can be written
\begin{equation}\label{eq:R-gallery-expanded}
 R_{\mu;\xi}
 =
 \sum_{\gamma:\xi\leadsto\mu}
 \epsilon(\gamma)\,
 \prod_\rho q_\rho^{c_\rho(\gamma)}
 (q_\rho-1)^{f_\rho(\gamma)}
 \chi_{\mathfrak m}
 \bigl(\Theta_{\eta(\gamma)}\bigr)
 U(\gamma),
\end{equation}
where
\[
 \epsilon(\gamma)\in\{\pm1\},
 \qquad
 c_\rho(\gamma),f_\rho(\gamma)\in\mathbb Z_{\geq0},
 \qquad
 U(\gamma)\in M_r(\Ocoeff).
\]
\end{proposition}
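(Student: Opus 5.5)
The plan is an induction on reduction complexity, using the acyclicity of Lemma~\ref{lem:global-reduction-graph-acyclic}. Define, for each $\xi\in\Xi$, the right-hand side of \eqref{eq:R-gallery-formula} as a formal sum over directed paths in $\Gamma_{\mathrm{red}}$ from $\xi$ to $\mu$. First we need finiteness. Each pivot relation is a finite linear relation: \eqref{eq:master-reduction-relation} has finitely many terms, because $H^h$ has finite columns and $W^P$ is finite. So every vertex of $\Gamma_{\mathrm{red}}$ has finite out-degree. Every directed path strictly decreases $\operatorname{comp}$. Shells are finite by Definition~\ref{def:affine-Bruhat-shelling}, so the set of indices of complexity below $\operatorname{comp}(\xi)$ within shells $\leq h(\xi)$ is finite. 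Paths cannot enter a higher shell, since edges decrease complexity lexicographically. Hence all paths from $\xi$ stay in a finite set, have length bounded by its cardinality, and by König-type counting (finite out-degree, bounded length) are finite in number.

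Next comes the recursion. If $\xi\in\mathscr N$, then $R_{\mu;\xi}=\delta_{\mu\xi}$, which matches the formula: only the empty path exists, since terminal vertices have no outgoing edges, and empty products give $C=1$, $\eta=0$, and $\chi_{\mathfrak m}(\Theta_0)=1$. If $\xi$ is a pivot, solving its chosen relation writes $q_\xi=\sum_{e:\xi\to\xi'}a(e)\,q_{\xi'}$ with $a(e)=c(e)\,\chi_{\mathfrak m}(\Theta_{\eta(e)})$. Here we use that the Bernstein translation is specialised through $\chi_{\mathfrak m}$ on the module side, as in \eqref{eq:finite-shuffle-matrix}. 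By uniqueness of normal forms (Lemma~\ref{lem:triangular-reduction-normal-form}), $R_{\mu;\xi}=\sum_e a(e)R_{\mu;\xi'}$. Applying the inductive hypothesis to each $\xi'$ and concatenating paths then gives \eqref{eq:R-gallery-formula}. The concatenation needs $\chi_{\mathfrak m}(\Theta_{\eta})\chi_{\mathfrak m}(\Theta_{\eta'})=\chi_{\mathfrak m}(\Theta_{\eta+\eta'})$, which holds because $\chi_{\mathfrak m}$ is a character on the commutative Bernstein subalgebra and $\Theta_\eta\Theta_{\eta'}=\Theta_{\eta+\eta'}$. The ordering convention $C(\gamma)=c(e_s)\cdots c(e_1)$ matches composing matrix-valued edge data at block level.

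For \eqref{eq:R-gallery-expanded}, factor each $c(e)$ according to Definition~\ref{def:global-reduction-path} as a sign, nonnegative powers of $q_\rho$ and $q_\rho-1$, and $U(e)\in M_r(\Ocoeff)$. Products of scalars commute past matrices. Signs multiply to $\epsilon(\gamma)$, exponents add to give $c_\rho(\gamma)$ and $f_\rho(\gamma)$, and $U(\gamma)=U(e_s)\cdots U(e_1)$ lies in $M_r(\Ocoeff)$ because $M_r(\Ocoeff)$ is a ring.

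The main obstacle is justifying that each edge coefficient really has the product shape of Definition~\ref{def:global-reduction-path} after solving for the pivot. Dividing by the pivot coefficient could introduce negative powers of $q_\rho$ or non-integral type matrices. I would handle this by taking the definition as the standing hypothesis on edge data, so that the proposition is the purely combinatorial statement that follows once those edge data are fixed. I would note explicitly that any pivot normalisation is absorbed into $c(e)$ by assumption.
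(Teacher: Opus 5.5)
Your proposal is correct and follows essentially the same route as the paper. You get finiteness from the complexity-decreasing, finitely branching reduction graph, then expand the pivot relations recursively using multiplicativity of $\chi_{\mathfrak m}$ on Bernstein monomials and ordered multiplication of the edge data, which the paper, like you, takes as given by Definition~\ref{def:global-reduction-path}; your induction via uniqueness of normal forms and your finite-shell bound on path length just make explicit what the paper compresses into ``finite depth'' and ``expanding recursively.''
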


\begin{proof}
Lemma~\ref{lem:global-reduction-graph-acyclic} implies that every reduction
tree rooted at a fixed \(\xi\) has finite depth.  Each pivot relation has
finite support, hence the tree has only finitely many branches.  Expanding
recursively and collecting the contribution of the branches terminating at
\(\mu\) gives \eqref{eq:R-gallery-formula}.  Multiplicativity of Bernstein
translations gives the accumulated exponent \(\eta(\gamma)\), while the
remaining scalar and type factors multiply in the order of the path.  This
gives \eqref{eq:R-gallery-expanded}.
\end{proof}

For a matrix \(U=(u_{ab})\in M_r(E)\) we use the matrix valuation
\begin{equation}\label{eq:matrix-valuation-definition}
 v_E(U)=\min_{a,b}v_E(u_{ab}),
 \qquad v_E(0)=+\infty.
\end{equation}
Thus \(U\in M_r(\Ocoeff)\) implies \(v_E(U)\geq0\), and
\(v_E(UV)\geq v_E(U)+v_E(V)\).

\begin{corollary}[Pathwise valuation lower bound]
\label{cor:global-reduction-path-valuation}
For every \(\mu\in\mathscr N\) and \(\xi\in\Xi\),
\begin{equation}\label{eq:R-valuation-lower-bound}
 v_E(R_{\mu;\xi})
 \geq
 \min_{\gamma:\xi\leadsto\mu}
 \left[
 \sum_\rho c_\rho(\gamma)v_E(q_\rho)
 +
 v_E\!\left(
 \chi_{\mathfrak m}(\Theta_{\eta(\gamma)})
 \right)
 +
 v_E(U(\gamma))
 \right].
\end{equation}
The factors \(q_\rho-1\) do not occur on the right because
\(q_\rho-1\in\Ocoeff^\times\).  Cancellation among distinct paths can only
increase the valuation.
\end{corollary}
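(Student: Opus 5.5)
The corollary follows directly from the finite path formula of Proposition~\ref{prop:global-reduction-path-formula}, combined with the ultrametric inequality and multiplicativity of the valuation. Fix $\mu\in\mathscr N$ and $\xi\in\Xi$. By that proposition only finitely many reduction paths $\gamma:\xi\leadsto\mu$ occur. Hence \eqref{eq:R-gallery-expanded} expresses $R_{\mu;\xi}$ as a finite sum, and the ultrametric inequality for the matrix valuation \eqref{eq:matrix-valuation-definition} applies. Entrywise, $v_E(U+U')\geq\min\{v_E(U),v_E(U')\}$, so
\[
 v_E(R_{\mu;\xi})\geq\min_{\gamma:\xi\leadsto\mu}v_E\bigl(\text{summand}(\gamma)\bigr).
\]
This is exactly the statement that cancellation among paths can only raise the valuation. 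If there are no paths, then $R_{\mu;\xi}=0$, the right-hand side is an empty minimum equal to $+\infty$, and the bound holds trivially.

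Next I will bound each summand. For a single path $\gamma$ the summand is a scalar times the matrix $U(\gamma)$. For a scalar $s$ one has $v_E(sU)=v_E(s)+v_E(U)$, because scaling multiplies every entry. So
\[
 v_E(\text{summand}(\gamma))=v_E(\epsilon(\gamma))+\sum_\rho\bigl[c_\rho(\gamma)v_E(q_\rho)+f_\rho(\gamma)v_E(q_\rho-1)\bigr]+v_E\bigl(\chi_{\mathfrak m}(\Theta_{\eta(\gamma)})\bigr)+v_E(U(\gamma)).
\]
The sign satisfies $v_E(\pm1)=0$. For the factor $q_\rho-1$, note that $q_\rho$ is a power of the residue characteristic $p$ of $F$, and $p$ is also the residue characteristic of $E$. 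Thus $q_\rho\equiv0$ modulo the maximal ideal of $\Ocoeff$, so $q_\rho-1\equiv-1$ is a unit and $v_E(q_\rho-1)=0$. This removes the $f_\rho$ terms, and the remaining expression is the bracket in \eqref{eq:R-valuation-lower-bound}. Taking the minimum over $\gamma$ gives the corollary.

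A few routine points need checking. First, when $R_{\mu;\xi}$ is read as a scalar entry, the scalar valuation is at least the matrix valuation, so the inequality persists. Second, $U(\gamma)\in M_r(\Ocoeff)$ is a product of the edge matrices $U(e)\in M_r(\Ocoeff)$, so it is integral. The bound is stated with $v_E(U(\gamma))$ itself, which is sharper than using $v_E(U(\gamma))\ge0$. The main obstacle is not in this corollary, which is formal once Proposition~\ref{prop:global-reduction-path-formula} is granted. The only delicate point is that the value $\chi_{\mathfrak m}(\Theta_{\eta(\gamma)})$ may have negative valuation. It must therefore be kept inside the minimum rather than bounded below by $0$; controlling it is the subsequent arithmetic input, as in the $(\mathrm{EC})$ discussion.
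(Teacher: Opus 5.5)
Your proposal is correct and takes the same approach as the paper: you apply the ultrametric inequality to the finite expansion \eqref{eq:R-gallery-expanded} and use additivity of $v_E$ on each summand, with the signs and the units $q_\rho-1$ contributing valuation zero. Your extra remarks (the empty-path case, why $q_\rho-1$ is a unit, and passing from the matrix valuation to a scalar entry) make explicit what the paper's one-line proof leaves implicit.
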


\begin{proof}
Apply the ultrametric inequality to
\eqref{eq:R-gallery-expanded}.  The sign factors and the powers of
\(q_\rho-1\) have valuation zero, while the remaining factors contribute
additively to the displayed lower bound.
\end{proof}

\subsection{Exact matrix coefficients of an individual group element}

The point of the construction is that the action of an individual
\(g\in G\) is block-monomial on the universal coset basis.

Recall that $\mathscr R$ is the fixed set of representatives for the left cosets $J\backslash G$ chosen above.  For $x\in\mathscr R$, write
\[
 xg^{-1}=j_{x,g}x_g,
 \qquad
 j_{x,g}\in J,
 \quad
 x_g\in\mathscr R.
\]
Then
\begin{equation}\label{eq:g-on-universal-coset-basis}
 g\phi_{x,a}
 =
 \sum_{b=1}^r
 \lambda(j_{x,g}^{-1})_{ba}\,
 \phi_{x_g,b}.
\end{equation}

Let
\[
 e_\nu=q_{x,a,w}
\]
be a normal-form basis vector.  Combining
\eqref{eq:g-on-universal-coset-basis} with
\eqref{eq:global-reduction-matrix} gives the exact formula
\begin{equation}\label{eq:exact-smooth-matrix-coefficient}
 \boxed{
 A^{\mathrm{sm}}_{\mu\nu}(g)
 =
 \sum_{b=1}^r
 R_{\mu;(x_g,b,w)}
 \lambda(j_{x,g}^{-1})_{ba}.
 }
\end{equation}

Now choose a torus weight basis
\[
 \{v_\beta\}
\]
of \(\pi_{\mathrm{alg}}(r)\).  For
\[
 a_j=\operatorname{diag}(\varpi_FI_j,I_{n-j}),
\]
write
\[
 a_jv_\beta=\eta_\beta(a_j)v_\beta.
\]
The Hamel basis of \(V\) is
\[
 e_{\nu,\beta}=e_\nu\otimes v_\beta.
\]
For an arbitrary $g\in G$, define the locally algebraic matrix coefficients by
\begin{equation}\label{eq:definition-locally-algebraic-A}
 g e_{\nu,\beta}
 =\sum_{\mu,\gamma}
 A_{(\mu,\gamma),(\nu,\beta)}(g)e_{\mu,\gamma}.
\end{equation}
Thus the first pair of indices records the output smooth and algebraic basis
vectors and the second pair records the input vectors.  For a Cartan element
$a_j$, the chosen algebraic weight basis is diagonal, so the algebraic index
is preserved.  Hence
\begin{equation}\label{eq:exact-locally-algebraic-matrix-coefficient}
 \boxed{
 A_{(\mu,\gamma),(\nu,\beta)}(a_j)
 =
 \delta_{\gamma\beta}\,
 \eta_\beta(a_j)
 \sum_{b=1}^r
 R_{\mu;(x_j,b,w)}
 \lambda(j_{x,j}^{-1})_{ba},
 }
\end{equation}
where
\[
 xa_j^{-1}=j_{x,j}x_j.
\]

Consequently 
\begin{align}
 v_E
 \left(
 A_{(\mu,\beta),(\nu,\beta)}(a_j)
 \right)
 \geq\;&
 v_E(\eta_\beta(a_j))
 \nonumber\\
 &+
 \min_b
 \left\{
 v_E(R_{\mu;(x_j,b,w)})
 +
 v_E(\lambda(j_{x,j}^{-1})_{ba})
 \right\}.
 \label{eq:matrix-coefficient-valuation-basic}
\end{align}
If \(\lambda^\circ\) is \(J\)-stable, then
\[
 v_E(\lambda(j)_{ba})\geq0.
\]

\subsection{Defects in the multisegment reduction}

We now organize the multisegment data and the resulting valuation calculation
in definition--lemma--proposition form.

\begin{definition}[Atomic Weil--Deligne blocks]
\label{def:atomic-WD-blocks}
Recall the indecomposable decomposition
\eqref{eq:WD-indecomposable-decomposition}:
\[
 D=\bigoplus_{i=1}^tD_i,
 \qquad
 D_i=D(\tau_i,\ell_i).
\]
Write each indecomposable block as its monodromy chain
\begin{equation}\label{eq:atomic-WD-chain}
 D_i
 =D_{i,0}\oplus D_{i,1}\oplus\cdots\oplus D_{i,\ell_i-1},
 \qquad
 D_{i,j}=D_{i,0}(j),
\end{equation}
where \(N(D_{i,j})\subseteq D_{i,j-1}\) for \(j>0\).  The individual
summands \(D_{i,j}\) are called the \emph{atomic Weil--Deligne blocks}, or
simply the \emph{atomic blocks}.  Thus an atomic block is one irreducible
Weil constituent together with one fixed twist, before adjoining it to the
rest of its monodromy chain.  Enumerate all atomic blocks as
\[
 D'_1,\ldots,D'_B,
 \qquad
 B=\sum_{i=1}^t\ell_i.
\]
The rank of \(D'_a\) is the dimension of its underlying irreducible Weil
representation; these ranks need not be equal when several cuspidal lines
occur.  The Newton number \(t_N(D')\) denotes the additive Newton degree of
the corresponding Frobenius module, with the same normalization as in Hu's
admissibility criterion \cite{MR2560407}.
\end{definition}

\begin{definition}[Admissible Weil--Deligne prefix and admissible ordering]
\label{def:admissible-wd-prefix}
Let \(\nu\in S_B\).  For \(1\leq k\leq B\), set
\[
 D_{\nu,\leq k}
 =
 \bigoplus_{a=1}^kD'_{\nu(a)}.
\]
We call \(D_{\nu,\leq k}\) an \emph{admissible Weil--Deligne prefix} if it
is a Weil--Deligne subobject of \(D\).  In the atomic decomposition
\eqref{eq:atomic-WD-chain}, this means precisely that the chosen set of
atomic blocks is closed under all inherited monodromy arrows: whenever
\(D_{i,j}\) with \(j>0\) belongs to the prefix, every predecessor required by
the monodromy chain, in particular \(D_{i,j-1}\), also belongs to the
prefix.  Equivalently, the set of selected positions in each monodromy chain
is downward closed.

An ordering \(\nu\in S_B\) is \emph{admissible for prefixes} if every prefix
that is used in the reduction is admissible in this sense.  A
\emph{proper admissible prefix} is one of rank strictly smaller than
\(\operatorname{rk}D\).
\end{definition}

\begin{definition}[Prefix ranks and defects]
\label{def:prefix-defects}
For an ordering \(\nu\) admissible for prefixes, define prefix rank as
\[
 r_{\nu,k}
 =
 \sum_{a=1}^k\operatorname{rk}D'_{\nu(a)}
\]
and define defects as
\begin{equation}\label{eq:defects-multisegment}
 \Delta_{\nu,k}
 =
 \sum_{a=1}^k t_N(D'_{\nu(a)})
 -
 [E:F]
 \sum_{\ell=1}^{r_{\nu,k}}\sum_\sigma i_{\ell,\sigma}.
\end{equation}
We also set
\begin{equation}\label{eq:kappa-multisegment}
 \kappa_{E,F}
 =
 \frac{e(E/\mathbb Q_p)}
 {e(F/\mathbb Q_p)[E:F]}.
\end{equation}
\end{definition}

\begin{lemma}[Inequalities on admissible prefixes]
\label{lem:admissible-prefix-inequalities}
Assume that the filtered Weil--Deligne module attached to \(r\) is
admissible.  Then every proper admissible Weil--Deligne prefix occurring in
the multisegment reduction satisfies
\[
 \Delta_{\nu,k}\geq0,
\]
and the full-rank prefix satisfies equality.
\end{lemma}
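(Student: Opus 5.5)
The argument reduces the lemma to weak admissibility of the filtered $(\varphi,N,G_{L/F})$-module attached to $r$, applied to the sub-objects cut out by admissible prefixes. Colmez--Fontaine identifies admissibility with weak admissibility. Weak admissibility says two things. First, for every sub-object $D'$ stable under $\varphi$, $N$ and the descent data, $t_H(D')\leq t_N(D')$. Second, equality holds for $D$ itself.

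By Definition~\ref{def:admissible-wd-prefix}, an admissible prefix $D_{\nu,\leq k}$ is a Weil--Deligne subobject: it is closed under monodromy and is a sum of atomic blocks, hence stable under the Weil action. Through Fontaine's equivalence between Weil--Deligne data and $(\varphi,N,\mathrm{Gal})$-modules, it therefore corresponds to a sub-object $D'_{\nu,\leq k}$ of the filtered module. Its Newton number is additive over the atomic blocks, $t_N(D'_{\nu,\leq k})=\sum_{a\le k}t_N(D'_{\nu(a)})$, because $t_N$ is additive on direct sums of Frobenius modules. This additivity is built into the normalization of Definition~\ref{def:atomic-WD-blocks}.

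The Hodge side is the main obstacle. We need the estimate
\[
 t_H(D'_{\nu,\leq k})\;\geq\;[E:F]\sum_{\ell=1}^{r_{\nu,k}}\sum_\sigma i_{\ell,\sigma},
\]
with equality when $k=B$. For each embedding $\sigma$, the induced filtration on the rank-$r_{\nu,k}$ subspace has jumps forming a sub-multiset of $\operatorname{HT}_\sigma(r)$ of size $r_{\nu,k}$. The sum of such jumps is therefore at least the sum of the $r_{\nu,k}$ smallest weights $i_{\sigma,1}<\dots<i_{\sigma,r_{\nu,k}}$. Summing over $\sigma$ and multiplying by the degree factor $[E:F]$ gives the bound. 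That factor comes from computing $t_H$ over $F\otimes_{\Qp}E$ and is the same normalization as in \cite{MR2560407}. Combining this with $t_H\le t_N$ gives $\Delta_{\nu,k}\ge t_N-t_H\ge 0$. The hypothesis is used only through this chain of inequalities, so the lemma in fact holds for every admissible prefix, not only those occurring in the reduction.

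For the full-rank prefix, $D'_{\nu,\leq B}=D$. The Hodge jumps are then all the weights, so the displayed estimate is an equality, and weak admissibility gives $t_H(D)=t_N(D)$. Hence $\Delta_{\nu,B}=0$.

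The bookkeeping most likely to fail is the compatibility of normalizations, on two points. First, the Newton degree must be measured with the same valuation convention (normalized $v_F$ versus $v_E$) as the Hodge sum. Second, the factor $[E:F]$ in \eqref{eq:defects-multisegment} must correctly reflect scalar extension. I would check this first on a rank-one atomic block, a crystalline character, where both sides can be computed explicitly. I would then extend the check to general blocks by additivity.
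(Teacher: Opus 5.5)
Your proof is correct and follows the paper's argument. The paper cites the necessity direction of Hu's criterion for the Weil--Deligne subobject $D_{\nu,\leq k}$, and that direction is what you unpack: weak admissibility $t_H\leq t_N$ with equality for $D$, the bound that the induced Hodge jumps of a rank-$r_{\nu,k}$ subobject sum to at least the $r_{\nu,k}$ smallest weights, and additivity of $t_N$ over atomic blocks.

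One point is tacit in both versions. $D$ is Frobenius-semisimplified, as \eqref{eq:WD-indecomposable-decomposition} requires, so a prefix need not be $\varphi$-stable in the original filtered module. This is harmless, because only the prefix's rank and slope dimensions enter, and the unipotent part of Frobenius has a rational fixed point on the projective variety of subobjects with those invariants; that fixed point is a genuine subobject with the same rank and $t_N$.
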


\begin{proof}
By Definition~\ref{def:admissible-wd-prefix},
\(D_{\nu,\leq k}\) is a Weil--Deligne subobject.  Hu's admissibility
criterion therefore applies to this subobject and gives that its Newton
degree is at least the Hodge degree determined by the first
\(r_{\nu,k}\) Hodge jumps.  In the normalization of
Definition~\ref{def:prefix-defects}, this is exactly
\(\Delta_{\nu,k}\geq0\).  For the full module, weak admissibility gives
equality of total Newton and Hodge degrees.
\end{proof}

\begin{definition}[Positively oriented colored affine gallery]
\label{def:colored-affine-gallery}
A \emph{colored affine gallery} is a positively oriented affine--Bruhat
reduction path
\[
 \gamma:\xi_0\longrightarrow\xi_1\longrightarrow\cdots
 \longrightarrow\xi_s
\]
in the reduction graph underlying \eqref{eq:R-gallery-formula}, together
with labels, called \emph{colors}, on the distinguished positive
prefix-wall crossings.  The colors are the atomic blocks
\(D'_1,\ldots,D'_B\).  Crossing a prefix wall with color \(D'_a\) means that
the growing atomic prefix is enlarged by adjoining \(D'_a\).  We record a
color only when that atomic block enters the growing prefix for the first
time; repeated elementary reduction steps carrying the same block do not
create a new position in the ordering.

The colored gallery is \emph{complete} if every atomic block occurs, and it
is \emph{admissibly colored} if, after each new color is recorded, the direct
sum of the atomic blocks recorded so far is an admissible Weil--Deligne
prefix in the sense of Definition~\ref{def:admissible-wd-prefix}.  Reading
the first occurrences of the colors along a complete admissibly colored
gallery gives an ordering
\[
 \nu_\gamma=(\nu_\gamma(1),\ldots,\nu_\gamma(B))\in S_B.
\]
The gallery is \emph{positively oriented} when its translation increments
lie in the chosen dominant cone.  Accordingly, the coefficients of the
noncentral fundamental coweights in its accumulated translation are
nonnegative.
\end{definition}

\begin{lemma}[Prefix decomposition of the gallery translation]
\label{lem:gallery-prefix-translation-decomposition}
Let \(\gamma\) be a complete admissibly colored positively oriented gallery,
and let \(\nu_\gamma\) be the ordering obtained from its colors.  Let
\(T\subset\GL_N\) be the diagonal torus and let
\(\varepsilon_i^\vee\in X_*(T)\) be the \(i\)-th coordinate coweight.  For
\(1\leq j\leq N\), put
\begin{equation}\label{eq:standard-fundamental-coweights}
 \omega_j^\vee
 =\varepsilon_1^\vee+\cdots+\varepsilon_j^\vee,
 \qquad
 \omega_j^\vee(t)
 =\operatorname{diag}(\underbrace{t,\ldots,t}_{j},1,\ldots,1).
\end{equation}
Then \(\omega_1^\vee,\ldots,\omega_{N-1}^\vee\) are the standard
fundamental coweights for the upper-triangular Borel of \(\GL_N\), whereas
\(\omega_N^\vee\) is central.  The accumulated translation of \(\gamma\)
has, uniquely modulo the centre, the form
\begin{equation}\label{eq:gallery-translation-prefix-decomposition}
 \eta(\gamma)
 =z_\gamma+
 \sum_km_{\gamma,k}\,
 \omega_{r_{\nu_\gamma,k}}^\vee,
 \qquad
 m_{\gamma,k}\in\mathbb Z_{\geq0},
\end{equation}
where
\(z_\gamma\in X_*(Z(\GL_N))=\mathbb Z\omega_N^\vee\), and the sum runs over
the proper admissible prefixes selected by the gallery.
\end{lemma}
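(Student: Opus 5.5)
The first claim, that $\omega_1^\vee,\ldots,\omega_{N-1}^\vee$ are the fundamental coweights for the upper-triangular Borel and $\omega_N^\vee$ is central, is a direct check. I will pair $\omega_j^\vee$ with the simple roots $\alpha_i=\varepsilon_i-\varepsilon_{i+1}$, which gives $\langle\alpha_i,\omega_j^\vee\rangle=\delta_{ij}$ for $1\le i,j\le N-1$. The element $\omega_N^\vee(t)=tI_N$ is scalar. It follows that $\{\omega_1^\vee,\ldots,\omega_N^\vee\}$ is a $\mathbb Z$-basis of $X_*(T)\simeq\mathbb Z^N$: the change-of-basis matrix from the $\varepsilon_i^\vee$ is unitriangular. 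Hence every coweight $\eta$ has a unique expansion $\eta=\sum_{j=1}^N c_j\omega_j^\vee$ with $c_j=\eta_j-\eta_{j+1}$ for $j<N$ and $c_N=\eta_N$. This already gives uniqueness modulo $X_*(Z(\GL_N))=\mathbb Z\omega_N^\vee$: the central part is $z=c_N\omega_N^\vee$, and the noncentral coefficients are the successive differences $\eta_j-\eta_{j+1}$.

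For existence of the particular form \eqref{eq:gallery-translation-prefix-decomposition}, I will apply this expansion to $\eta(\gamma)=\sum_k\eta(e_k)$ and prove two things. The first is nonnegativity: $c_j\ge0$ for $j<N$. The second is support: $c_j=0$ unless $j=r_{\nu_\gamma,k}$ for some proper admissible prefix selected by $\gamma$. Nonnegativity is the positive-orientation hypothesis of Definition~\ref{def:colored-affine-gallery}: each increment $\eta(e_k)$ lies in the dominant cone, so its noncentral fundamental coordinates are nonnegative, and this is preserved under summation. The coefficients $m_{\gamma,k}$ are then the $c_{r_{\nu_\gamma,k}}$, and $z_\gamma=c_N\omega_N^\vee$.

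The support statement is the main obstacle, and I will attack it edge by edge. Using the coloring, I will order the coordinates $1,\ldots,N$ so that the atomic blocks $D'_{\nu_\gamma(1)},D'_{\nu_\gamma(2)},\ldots$ occupy consecutive intervals of lengths $\operatorname{rk}D'_{\nu_\gamma(a)}$. The boundaries between intervals are then exactly the prefix ranks $r_{\nu_\gamma,k}$. The claim to prove is that each edge increment $\eta(e)$ is constant on each such interval, i.e.\ it lies in the span of the $\omega^\vee_{r_{\nu_\gamma,k}}$ together with $\omega_N^\vee$. The reason is that within one atomic block, which is one cuspidal line at a fixed twist, the Bernstein translations arising from the straightening \eqref{eq:Bernstein-cross-relation-root-form} act through the block's central translation, so $\eta_j=\eta_{j+1}$ whenever $j,j+1$ lie in the same atomic interval. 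I would first try to read this off the Bernstein relation $T_iX_iT_i=q_\rho X_{i+1}$. A difference $\eta_j-\eta_{j+1}\neq0$ can only be produced by a reflection $s_j$ crossing the wall between distinct atomic blocks, and such a crossing is precisely a colored prefix-wall crossing in the sense of Definition~\ref{def:colored-affine-gallery}.

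Two further points remain. Prefixes that are selected but not proper contribute only to the central direction, since $\omega^\vee_{N}$ is central, and so they are absorbed into $z_\gamma$. Admissibility of the ordering is not needed for the algebraic decomposition itself; it is needed only so that the indices $r_{\nu_\gamma,k}$ label admissible prefixes. Where the block-constancy claim cannot be derived purely from the straightening relations, I would fall back on treating it as part of the defining structure of a positively oriented colored gallery: translations change only at recorded prefix walls.
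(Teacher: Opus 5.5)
Your proposal is correct and follows essentially the paper's route. You expand in the basis $\omega_1^\vee,\ldots,\omega_N^\vee$ with coefficients $\eta_j-\eta_{j+1}$ and $\eta_N$, which gives uniqueness modulo $\mathbb Z\omega_N^\vee$. You get nonnegativity from positive orientation and absorb the $\omega_N^\vee$ part into $z_\gamma$.

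The support at the ranks $r_{\nu_\gamma,k}$ is also, in the paper, taken directly from the coloring convention of Definition~\ref{def:colored-affine-gallery}, which is your fallback. Your heuristic derivation via $T_iX_iT_i=q_\rho X_{i+1}$ is therefore unnecessary, and it should not be presented as a proof.
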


\begin{proof}
For a dominant coweight
\(\lambda=(\lambda_1,\ldots,\lambda_N)\) with
\(\lambda_1\geq\cdots\geq\lambda_N\), one has
\[
 \lambda
 =
 \lambda_N\omega_N^\vee
 +
 \sum_{j=1}^{N-1}
 (\lambda_j-\lambda_{j+1})\omega_j^\vee.
\]
The coefficients of the noncentral fundamental coweights are therefore
nonnegative.  By the coloring convention of
Definition~\ref{def:colored-affine-gallery}, a distinguished prefix-wall
crossing can occur only at a rank \(j=r_{\nu_\gamma,k}\) of an admissible
prefix.  Collecting equal fundamental-coweight contributions along the
gallery gives \eqref{eq:gallery-translation-prefix-decomposition}; the
remaining multiple of \(\omega_N^\vee\) is the central term \(z_\gamma\).
Uniqueness follows from the displayed fundamental-coweight expansion modulo
\(\mathbb Z\omega_N^\vee\).
\end{proof}

\begin{lemma}[Valuation of a fundamental prefix translation]
\label{lem:prefix-valuation}
For every proper admissible prefix \(D_{\nu,\leq k}\):
\begin{align}
 &v_E\!\left(
   \chi_{\mathfrak m}(\Theta_{\omega_{r_{\nu,k}}^\vee})
   \eta_{\mathrm{alg}}(\omega_{r_{\nu,k}}^\vee)
 \right) \notag\\
 &\qquad=
 \kappa_{E,F}
 \left(
  t_N(D_{\nu,\leq k})
  -[E:F]
   \sum_{\ell=1}^{r_{\nu,k}}\sum_\sigma i_{\ell,\sigma}
 \right)
 =\kappa_{E,F}\Delta_{\nu,k}.
 \label{eq:prefix-valuation}
\end{align}
\end{lemma}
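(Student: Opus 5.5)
The plan is to split the valuation into its smooth part and its algebraic part. First I compute each part separately in terms of the Weil--Deligne and Hodge--Tate data. Then I identify their sum with the defect \eqref{eq:defects-multisegment}, rescaled by $\kappa_{E,F}$. Since $\Theta$ is multiplicative and both $\chi_{\mathfrak m}$ and $\eta_{\mathrm{alg}}$ are characters on the Bernstein torus, we have
\[
 v_E\bigl(\chi_{\mathfrak m}(\Theta_{\omega^\vee_{r}})\eta_{\mathrm{alg}}(\omega^\vee_r)\bigr)
 =v_E\bigl(\chi_{\mathfrak m}(\Theta_{\omega^\vee_{r}})\bigr)+v_E\bigl(\eta_{\mathrm{alg}}(\omega^\vee_r)\bigr),
\]
where $r=r_{\nu,k}$. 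Since $\omega^\vee_r=\varepsilon_1^\vee+\cdots+\varepsilon_r^\vee$, each term is a sum over the first $r$ coordinates. I would carry this out only after reordering the coordinates of the Bernstein torus according to $\nu$. The Weyl group permutes the $X_j$ inside the parabolic normal form, and an admissible ordering is exactly one for which the first $r$ coordinates are the atomic blocks $D'_{\nu(1)},\dots,D'_{\nu(k)}$.

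\emph{Smooth part.} By \eqref{eq:segment-Bernstein-values-general} and \eqref{eq:external-segment-character}, $\chi_{\mathfrak m}(X_{i,j})=z_{\Delta_i}q_\rho^{-(j-1)}$. So $\chi_{\mathfrak m}(\Theta_{\omega^\vee_r})$ is the product of these values over the atomic blocks in the prefix. I then translate this through the normalization $r_p(\pi)=\operatorname{rec}_p(\pi\otimes|\det|_F)$ and \eqref{eq:WD-to-segment}. The point is that $z_{\Delta_i}q_\rho^{-(j-1)}$ is, up to a unit, the determinant of Frobenius on $D_{i,j}$, raised to a power fixed by the type normalization. The twist $(j)$ supplies exactly the factor $q^{-(j-1)}$. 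Passing from a valuation normalized on $F$ to $v_E$ introduces the ratio of ramification indices. The Newton number $t_N$ is additive Newton degree with Hu's normalization, and it divides by $[E:F]$. Together these should give $\kappa_{E,F}\,t_N(D_{\nu,\le k})$, using additivity $t_N(D_{\nu,\le k})=\sum_a t_N(D'_{\nu(a)})$. The modulus factor $\delta^{\pm1/2}$ from normalized induction has to be tracked here as well. I expect it to cancel against the $|\det|_F$ twist in $r_p$.

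\emph{Algebraic part.} By \eqref{eq:BS-highest-weight}, the highest weight evaluated at $\omega^\vee_r(\varpi_F)$ is $\prod_\kappa\prod_{j\le r}\kappa(\varpi_F)^{\xi_{\kappa,j}}$. Its $v_E$-valuation is $\frac{e(E/\mathbb Q_p)}{e(F/\mathbb Q_p)}\sum_\kappa\sum_{j\le r}(-i_{\kappa,j}+j-1)$. The terms $j-1$ should cancel against the $\delta_P^{1/2}$ factor left over from the smooth side; this is the standard $\rho$-shift. What remains is $-\frac{e(E)}{e(F)}\sum_{\ell\le r}\sum_\sigma i_{\ell,\sigma}$, which matches $-\kappa_{E,F}[E:F]\sum_{\ell\le r}\sum_\sigma i_{\ell,\sigma}$. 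The highest weight is the relevant one because the prefix coweight is dominant, so this weight gives the extremal valuation. Adding the two parts yields the middle expression of \eqref{eq:prefix-valuation}, and Definition~\ref{def:prefix-defects} then identifies it with $\kappa_{E,F}\Delta_{\nu,k}$.

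The main obstacle is the bookkeeping of normalizations on the smooth side. This means matching $z_{\Delta_i}$, under the type equivalence and under the normalization of $r_p$, with Frobenius eigenvalues, while getting the signs of the twists $\nu^{j}$ against $(j)$ right. It also requires showing that all modulus-character factors cancel exactly with the $j-1$ shifts in $\xi$. I would first verify the case of one unramified character ($d=1$, a single segment), where everything is explicit. I would then reduce the general case to it by additivity over atomic blocks, using $q_\rho=q_F^{\,f}$ for the appropriate torsion number to handle higher-rank cuspidals.
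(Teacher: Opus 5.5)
You follow the same outline as the paper's proof: the Bernstein factor gives the Newton degree, the highest-weight factor gives minus the Hodge degree, and one then rescales by $\kappa_{E,F}$. The paper also only asserts these two identifications. But the normalisation bookkeeping you postpone cannot be completed, because with the conventions fixed in the paper, \eqref{eq:prefix-valuation} is false as written. Run the test case you propose yourself, $F=E=\Qp$, $d=1$, a single segment, but take $\ell=3$. By \eqref{eq:segment-translation-character}, $\chi_\Delta(X_1\cdots X_r)=z_\Delta^{\,r}q_\rho^{-r(r-1)/2}$. By \eqref{eq:BS-highest-weight}, the highest weight at $\omega_r^\vee(p)$ is $p^{\sum_{j\le r}(-i_j+j-1)}$. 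The power of $q_\rho$ and the $\rho$-shift $\sum_{j\le r}(j-1)$ cancel exactly. They also cancel for general $F,E$, since $[F:\Qp]\,v_E(\varpi_F)=v_E(q_F)$. So the left side equals $r\,v_E(z_\Delta)-\sum_{j\le r}i_j$, which is linear in $r$ apart from the Hodge term. The right side contains $t_N(D^{(r)})=\sum_{j<r}t_N(D_0(j))$. The monodromy is nonzero along the chain and $N\varphi=p\varphi N$, so the slopes of consecutive atomic blocks differ by $1$, and $t_N(D^{(r)})$ is quadratic in $r$. Concretely, $k=1$ gives $v_E(z_\Delta)=t_N(D_0)$ and $k=2$ gives $2v_E(z_\Delta)=t_N(D_0)+t_N(D_0(1))$. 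Hence $t_N(D_0(1))=t_N(D_0)$, which is impossible for any choice of $z_\Delta$.

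This is exactly where your plan double-counts. You use the factor $q_\rho^{-(j-1)}$ once to produce the twist increments of $t_N$ (``the twist $(j)$ supplies exactly the factor $q^{-(j-1)}$''). You then use it again, as the ``$\delta_P^{1/2}$ left over from the smooth side'', to absorb the $\rho$-shift $j-1$ in $\xi_{\kappa,j}$. It can do only one of these jobs, and the computation above shows it does the second. Your other cancellation, of $\delta^{\pm1/2}$ against the $|\det|_F$ twist in $r_p$, cannot happen either. Put $a_r=\omega_r^\vee(\varpi_F)$ and let $P$ be of type $(r,n-r)$. Then $v_E\bigl(|\det|_F(a_r)\bigr)=-r\,v_E(q_F)$ is linear in $r$, whereas $v_E\bigl(\delta_P(a_r)^{1/2}\bigr)=-\tfrac{r(n-r)}{2}v_E(q_F)$ is quadratic. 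What is actually true, and what underlies Hu's criterion, is the identity for the unnormalised exponent $\chi(z)\delta_P(z)^{-1}$ of Definition~\ref{def:Emerton-condition}. For a single segment with $d=1$, the smooth factor there is the $r$-th power of one unramified parameter, with no $r$-dependent power of $q_F$. In that identity the $\rho$-shift in $\xi_{\kappa,j}$ is not cancelled; it supplies precisely the twist increments of $t_N(D^{(r)})$. The normalised Bernstein value in \eqref{eq:prefix-valuation} differs from this exponent by $\delta_P(a_r)^{\pm1/2}$, up to an unramified character of $G$. So the left-hand side must be corrected by that modulus factor before any proof can work. Separately, your reordering step is not a symmetry. $\chi_{\mathfrak m}$ is not $W$-invariant, so $\chi_{\mathfrak m}(\Theta_{\omega_r^\vee})$ depends only on $r$, while the right side depends on which blocks form the prefix. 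For example, two singleton segments of different slopes give two admissible rank-one prefixes with the same left side and different right sides. Replacing $\omega_r^\vee$ by $\sum_{a\le k}\varepsilon^\vee_{\nu(a)}$ changes the statement; it is not a step in proving it.
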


\begin{proof}
In the Bernstein normalization used for the type, the smooth Bernstein
character on the translation
\(\omega_{r_{\nu,k}}^\vee\) records the Newton degree of the
Weil--Deligne subobject \(D_{\nu,\leq k}\).  The algebraic highest-weight
character contributes the negative Hodge degree of rank
\(r_{\nu,k}\).  Taking \(v_E\), and converting the valuation normalization
by \(\kappa_{E,F}\), gives the displayed identity.  This is precisely Hu's
prefix inequality written in the present Bernstein normalization.
\end{proof}

\begin{proposition}[Valuation of a colored gallery]
\label{prop:gallery-valuation}
Let \(\gamma\) be a complete admissibly colored positively oriented gallery.
Then
\begin{equation}\label{eq:gallery-valuation}
 v_E\left(
 \chi_{\mathfrak m}(\Theta_{\eta(\gamma)})
 \eta_{\mathrm{alg}}(\eta(\gamma))
 \right)
 =
 \kappa_{E,F}
 \sum_k
 m_{\gamma,k}
 \Delta_{\nu_\gamma,k}.
\end{equation}
In particular, if the filtered module is admissible, the right-hand side is
nonnegative.
\end{proposition}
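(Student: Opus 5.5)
The plan is to combine the prefix decomposition of Lemma~\ref{lem:gallery-prefix-translation-decomposition} with the prefix valuation identity of Lemma~\ref{lem:prefix-valuation}. The key structural fact is that the map
\[
 \eta\longmapsto\chi_{\mathfrak m}(\Theta_\eta)\,\eta_{\mathrm{alg}}(\eta)
\]
is a group homomorphism $X_*(T)\to E^\times$. It is multiplicative because $\Theta_\eta\Theta_{\eta'}=\Theta_{\eta+\eta'}$ (Definition~\ref{def:Bernstein-translation-monomial}), $\chi_{\mathfrak m}$ is a character on the commutative Bernstein subalgebra, and $\eta_{\mathrm{alg}}$ is a torus character. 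Composing with $v_E$ therefore gives an additive function on the coweight lattice, and the whole computation reduces to evaluating it on the terms of \eqref{eq:gallery-translation-prefix-decomposition}.

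First step: write $\eta(\gamma)=z_\gamma+\sum_k m_{\gamma,k}\,\omega^\vee_{r_{\nu_\gamma,k}}$ as in Lemma~\ref{lem:gallery-prefix-translation-decomposition}. By additivity,
\[
 v_E\bigl(\chi_{\mathfrak m}(\Theta_{\eta(\gamma)})\eta_{\mathrm{alg}}(\eta(\gamma))\bigr)
 = v_E(\text{central term})+\sum_k m_{\gamma,k}\,
 v_E\bigl(\chi_{\mathfrak m}(\Theta_{\omega^\vee_{r_{\nu_\gamma,k}}})\eta_{\mathrm{alg}}(\omega^\vee_{r_{\nu_\gamma,k}})\bigr).
\]

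Second step: show the central term vanishes. The combined smooth-times-algebraic character on $\omega_N^\vee$ is the central character of $V$ evaluated at a central element. One route is to invoke unitarity, as in Corollary~\ref{cor:center} and Lemma~\ref{lem:unitary-central-direction}. The cleaner route, which avoids assuming unitarity, is to apply Lemma~\ref{lem:prefix-valuation}'s computation to the full-rank prefix $r=N$. This gives $\kappa_{E,F}$ times the full defect, which is zero by the equality case of Lemma~\ref{lem:admissible-prefix-inequalities}, i.e.\ by weak admissibility. I would use this second route, checking that the proof of Lemma~\ref{lem:prefix-valuation} applies verbatim to the full prefix (Newton degree of $D$ minus total Hodge degree). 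This also makes the statement independent of the choice of representative of $\eta(\gamma)$ modulo the centre.

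Third step: since every prefix $D_{\nu_\gamma,\le k}$ in the sum is a proper admissible prefix (the gallery is admissibly colored), Lemma~\ref{lem:prefix-valuation} replaces each summand by $\kappa_{E,F}\Delta_{\nu_\gamma,k}$. This yields \eqref{eq:gallery-valuation}. For nonnegativity, use $m_{\gamma,k}\ge0$ (positive orientation), $\Delta_{\nu_\gamma,k}\ge0$ (Lemma~\ref{lem:admissible-prefix-inequalities}) and $\kappa_{E,F}>0$.

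The main obstacle I expect is the central step. I need to confirm that the normalization of $\chi_{\mathfrak m}$ and $\eta_{\mathrm{alg}}$ on $\omega_N^\vee$ really produces the total Newton-minus-Hodge degree and not a stray modulus or $|\det|$ twist, coming from the shift $r_p(\pi)=\operatorname{rec}_p(\pi\otimes|\det|_F)$ or from $\delta_P$. If such a twist appears, I would absorb it by noting that it is the same for every prefix and is already built into the convention of Lemma~\ref{lem:prefix-valuation}, so the full-rank case inherits it consistently.
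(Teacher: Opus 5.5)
Your proposal is correct and is essentially the paper's argument: you use multiplicativity of $\eta\mapsto\chi_{\mathfrak m}(\Theta_\eta)\eta_{\mathrm{alg}}(\eta)$, the decomposition of Lemma~\ref{lem:gallery-prefix-translation-decomposition}, Lemma~\ref{lem:prefix-valuation} on each proper prefix, and Lemma~\ref{lem:admissible-prefix-inequalities} for nonnegativity. For the central term the paper makes the same move as your ``second route'': a central coweight tests the full Weil--Deligne module, so the total Newton--Hodge equality from weak admissibility gives valuation zero (the paper remarks that this is equivalent to unitarity of the normalized central character).
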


\begin{proof}
Both \(\chi_{\mathfrak m}\) and \(\eta_{\mathrm{alg}}\) are multiplicative
on the translation lattice, and \(v_E\) is additive on products.  Apply
Lemma~\ref{lem:prefix-valuation} to the decomposition
\eqref{eq:gallery-translation-prefix-decomposition}.  This gives
\begin{align*}
 &v_E\left(
 \chi_{\mathfrak m}(\Theta_{\eta(\gamma)})
 \eta_{\mathrm{alg}}(\eta(\gamma))
 \right)\\
 &\qquad=
 v_E\left(
 \chi_{\mathfrak m}(\Theta_{z_\gamma})
 \eta_{\mathrm{alg}}(z_\gamma)
 \right)
 +\kappa_{E,F}
 \sum_km_{\gamma,k}\Delta_{\nu_\gamma,k}.
\end{align*}
The first term is zero: a central coweight tests the full
Weil--Deligne module, and weak admissibility gives the total equality
\[
 t_N(D)
 =[E:F]\sum_{\ell=1}^{N}\sum_\sigma i_{\ell,\sigma}.
\]
Equivalently, the normalized smooth--algebraic central character is unitary.
The final nonnegativity follows from
Lemma~\ref{lem:admissible-prefix-inequalities}.
\end{proof}

\begin{proposition}[Multisegment Cartan coefficient estimate]
\label{prop:multisegment-Cartan-coefficient-estimate}
Combining
\eqref{eq:R-valuation-lower-bound},
\eqref{eq:exact-locally-algebraic-matrix-coefficient},
and Proposition~\ref{prop:gallery-valuation}, one obtains
\begin{equation}\label{eq:multisegment-A-valuation}
 \boxed{
 v_E(A_{\mu\nu}(a_j))
 \geq
 \min_{\gamma\in\mathcal G_j(\mu,\nu)}
 \left[
 \kappa_{E,F}
 \sum_km_{\gamma,k}\Delta_{\nu_\gamma,k}
 +
 c(\gamma)v_E(q_\rho)
 +
 v_E(U(\gamma))
 \right].
 }
\end{equation}
For several cuspidal lines, the corresponding contributions on the
right-hand side are summed over the affine-Hecke factors.
\end{proposition}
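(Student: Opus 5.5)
The plan is to chain the three cited ingredients, working at the level of a single matrix coefficient and then minimising over galleries. I start from the boxed formula \eqref{eq:exact-locally-algebraic-matrix-coefficient}. It shows that the only nonzero coefficients of $a_j$ are the algebraic-diagonal ones. Each of these is a finite sum over $b$ of the product
\[
 \eta_\beta(a_j)\,R_{\mu;(x_j,b,w)}\,\lambda(j_{x,j}^{-1})_{ba}.
\]
By the ultrametric inequality, as in \eqref{eq:matrix-coefficient-valuation-basic}, the valuation is at least the minimum over $b$ of the sum of the three valuations. Because $\lambda^\circ$ is $J$-stable, the type factor $\lambda(j_{x,j}^{-1})_{ba}$ has nonnegative valuation. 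I will absorb it into the integral matrix $U(\gamma)$ by prefixing the path with the transport edge. This makes $U(\gamma)$ the ordered product of $\lambda(j_{x,j}^{-1})$ with the path type matrices, and it stays in $M_r(\Ocoeff)$.

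Next I expand $R_{\mu;(x_j,b,w)}$ by the path formula \eqref{eq:R-gallery-expanded} and bound it with Corollary~\ref{cor:global-reduction-path-valuation}. This gives a minimum over reduction paths of three terms: $\sum_\rho c_\rho(\gamma)v_E(q_\rho)$, then $v_E(\chi_{\mathfrak m}(\Theta_{\eta(\gamma)}))$, then $v_E(U(\gamma))$. The key regrouping is to attach the algebraic eigenvalue $\eta_\beta(a_j)$ to the smooth Bernstein character. The cocharacter of $a_j$ is $\omega_j^\vee$, and the path translation $\eta(\gamma)$ records the translation accumulated in moving from coset $x$ to $x_j$. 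So I would define $\mathcal G_j(\mu,\nu)$ as the set of reduction paths arising from $a_j$ and $\nu$, and interpret the algebraic weight through $\eta_{\mathrm{alg}}(\eta(\gamma))$. Then $v_E(\eta_\beta(a_j))+v_E(\chi_{\mathfrak m}(\Theta_{\eta(\gamma)}))$ becomes $v_E(\chi_{\mathfrak m}(\Theta_{\eta(\gamma)})\eta_{\mathrm{alg}}(\eta(\gamma)))$, up to the difference between the weight $\beta$ and the highest weight. I would control that difference in the favourable direction, since lower weights differ from the highest weight by positive roots evaluated on a dominant coweight.

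Finally, I apply Proposition~\ref{prop:gallery-valuation} to each $\gamma\in\mathcal G_j(\mu,\nu)$, viewed as a complete admissibly colored positively oriented gallery. This replaces the combined smooth–algebraic valuation by $\kappa_{E,F}\sum_k m_{\gamma,k}\Delta_{\nu_\gamma,k}$. Writing $c(\gamma)v_E(q_\rho)$ for the Hecke-parameter term gives \eqref{eq:multisegment-A-valuation}. When there are several cuspidal lines, the Hecke algebra, the translation lattice and the characters all factor as tensor products. The valuations of products then add, so the contributions are summed over factors.

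I expect the main obstacle to be the identification step: showing that every path contributing to the coefficient of $a_j$ is, or can be made, complete, admissibly colored and positively oriented, and that the algebraic weight $\beta$ rather than the highest weight is correctly accounted for. The gallery proposition presupposes these properties, and the reduction graph comes from an arbitrary deterministic pivot rule. My first attempt would be to define $\mathcal G_j(\mu,\nu)$ as exactly the set of paths produced, so that the estimate is formal. I would then record admissibility of the colorings as the hypothesis under which the right-hand side is nonnegative.
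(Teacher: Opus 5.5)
Your chain of ingredients --- \eqref{eq:exact-locally-algebraic-matrix-coefficient}, then the pathwise bound \eqref{eq:R-valuation-lower-bound}, then Proposition~\ref{prop:gallery-valuation}, then the ultrametric minimum --- is exactly the paper's. Absorbing the integral factor $\lambda(j_{x,j}^{-1})$ into $U(\gamma)$ and summing over cuspidal lines are harmless additions. The paper, however, simply asserts that ``the smooth Bernstein translation and the algebraic weight factor combine as in Proposition~\ref{prop:gallery-valuation}''. The step where you try to justify that combination fails.

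\textbf{The weight correction goes the wrong way.} Let $\beta$ be a weight of $\pi_{\mathrm{alg}}(r)$. Each $\xi_\kappa-\beta_\kappa$ is a nonnegative combination of positive roots, and positive roots pair nonnegatively with $\omega_j^\vee$. Hence $\langle\beta_\kappa,\omega_j^\vee\rangle\leq\langle\xi_\kappa,\omega_j^\vee\rangle$. Since $v_E(\kappa(\varpi_F))>0$, this gives $v_E(\eta_\beta(a_j))\leq v_E(\eta_\xi(a_j))$. For example, on $\mathrm{Sym}^k$ of the standard representation of $\GL_2$, $a_1=\operatorname{diag}(\varpi_F,1)$ acts by $\varpi_F^k$ on the highest-weight vector $e_1^k$ and by $1$ on the lowest-weight vector $e_2^k$. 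Replacing $\eta_\beta$ by the highest-weight character therefore \emph{overstates} the valuation, which is the wrong direction for a lower bound. A bound uniform in $\beta$ would have to use the lowest weight $w_0\xi$. On $\omega_{r_{\nu,k}}^\vee$ this brings in the $r_{\nu,k}$ largest Hodge--Tate weights instead of the $r_{\nu,k}$ smallest ones occurring in $\Delta_{\nu,k}$, and Hu's inequalities give no sign for the resulting defects.

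\textbf{Even for $\beta=\xi$, the ``regrouping'' is not an identity.} $\eta_\beta(a_j)$ is one scalar attached to $a_j$, while $\eta_{\mathrm{alg}}(\eta(\gamma))$ varies with $\gamma$. By Definition~\ref{def:global-reduction-graph}, the paths contributing to $R_{\mu;(x_j,b,w)}$, and their translations $\eta(\gamma)$, are determined by the spanning index $(x_j,b,w)$ and the pivot rule alone. They do not record a translation ``from $x$ to $x_j$'', and nothing ties them to $\omega_j^\vee$.

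What the argument actually needs, for every contributing $\gamma$, is:
\begin{itemize}
\item the inequality $v_E(\eta_\beta(a_j))\geq v_E(\eta_{\mathrm{alg}}(\eta(\gamma)))$;
\item completeness, admissible colouring and positive orientation of $\gamma$.
\end{itemize}
The second requirement is needed for the inequality itself, not only for nonnegativity of its right-hand side, since otherwise $\nu_\gamma$ and $m_{\gamma,k}$ are undefined.

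As it stands, your argument proves the estimate only with $v_E(\eta_\beta(a_j))+v_E(\chi_{\mathfrak m}(\Theta_{\eta(\gamma)}))$ in place of $\kappa_{E,F}\sum_km_{\gamma,k}\Delta_{\nu_\gamma,k}$. Building these requirements into the definition of $\mathcal G_j(\mu,\nu)$ as explicit hypotheses is the most that your argument, or the paper's one-line proof, actually supports. The weight discrepancy must be listed among those hypotheses, not claimed to work in your favour.
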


\begin{proof}
For each reduction gallery contributing to the matrix coefficient, the
smooth Bernstein translation and the algebraic weight factor combine as in
Proposition~\ref{prop:gallery-valuation}.  The remaining reduction
coefficient has valuation at least
\(c(\gamma)v_E(q_\rho)+v_E(U(\gamma))\) by
\eqref{eq:R-valuation-lower-bound}.  The ultrametric inequality gives the
minimum over all contributing galleries.  Several galleries may have the
same minimal valuation, but cancellation among them can only increase the
valuation, so the result is a lower bound.
\end{proof}

Formula \eqref{eq:multisegment-A-valuation} is therefore the useful
valuation direction for constructing a potential.

Fix a fundamental Cartan generator $a_j$.  A \emph{Cartan transition
edge} from $\nu$ to $\mu$ is a specified nonzero summand occurring in the
exact expansion of $a_je_\nu$ in the normal-form basis, together with one
reduction gallery $\gamma$ which produces that summand in
\eqref{eq:exact-locally-algebraic-matrix-coefficient}.  A \emph{Cartan
transition path}
\[
 p=(e_1,\ldots,e_m):\nu_0\to\nu_1\to\cdots\to\nu_m
\]
is a concatenation of such edges, all belonging to the same chosen Cartan
direction (or to a specified sequence of Cartan directions).

We make the data attached to an edge explicit.  Recall from
\eqref{eq:path-C-and-eta} that the global reduction matrix is indexed by
finite directed paths in the acyclic reduction graph.  If
$e:\nu\to\mu$ is a Cartan transition edge, the chosen \emph{reduction
gallery representing $e$} is precisely one such path
\begin{equation}\label{eq:edge-reduction-gallery-definition}
 \gamma_e:\xi_0\longrightarrow\xi_1\longrightarrow\cdots
 \longrightarrow\xi_s=\mu,
\end{equation}
where $\xi_0$ is a spanning index occurring in the universal-coset expansion
of $a_je_\nu$ and the terminal vertex is the normal-form basis index $\mu$.
The product of the edge coefficients along $\gamma_e$, together with the
Bernstein character of its accumulated translation, is the specified nonzero
summand which defines the transition edge $e$.  Thus choosing the edge means
choosing not merely the pair $(\nu,\mu)$, but also one concrete reduction
branch $\gamma_e$ producing that summand.

If $f$ is an elementary edge of the reduction graph, let $\eta(f)\in X_*(T)$
be the Bernstein translation exponent carried by that reduction step, as in
\eqref{eq:path-C-and-eta}.  The \emph{translation content} of the Cartan edge
$e$ (relative to the chosen gallery $\gamma_e$) is the accumulated coweight
\begin{equation}\label{eq:edge-translation-content-definition}
 \eta(e)=\eta(\gamma_e)
 =\sum_{f\in\gamma_e}\eta(f)\in X_*(T).
\end{equation}
For a product of affine-Hecke factors this is understood factor by factor, so
$\eta(e)$ is the corresponding tuple of coweights.

The \emph{noncentral translation content} is the image of $\eta(e)$ in
\[
 X_*(T)/X_*(Z(\GL_N)).
\]
For the positively oriented galleries considered here we use the unique
dominant representative modulo the central coweight $\omega_N^\vee$.  In
terms of the fundamental coweights defined in
\eqref{eq:standard-fundamental-coweights}, it has the form
\begin{equation}\label{eq:edge-noncentral-translation-content}
 \eta_{\mathrm{nc}}(e)
 =\sum_k m_{e,k}\,
   \omega_{r_{\nu_e,k}}^\vee,
 \qquad m_{e,k}\geq0,
\end{equation}
where the sum is over proper admissible prefixes.  Equivalently, if a dominant
representative is written in coordinates as
$(\lambda_1,\ldots,\lambda_N)$ with
$\lambda_1\geq\cdots\geq\lambda_N$, then modulo the centre
\[
 (\lambda_1,\ldots,\lambda_N)
 \equiv
 \sum_{j=1}^{N-1}(\lambda_j-\lambda_{j+1})\omega_j^\vee,
\]
so the coefficients of the noncentral fundamental coweights are nonnegative.
In our colored gallery only the indices \(j=r_{\nu_e,k}\) corresponding to
admissible Weil--Deligne prefixes occur.  By
Definition~\ref{def:admissible-wd-prefix}, such a prefix is a direct sum of
atomic blocks
\[
 P=\bigoplus_{a=1}^kD'_{\nu_e(a)}
\]
which is itself a Weil--Deligne subobject.  Concretely, in every monodromy
chain
\[
 D_{i,0},D_{i,1},\ldots,D_{i,\ell_i-1},
\]
the set of blocks belonging to \(P\) must be downward closed: if
\(D_{i,j}\) occurs with \(j>0\), then the predecessor
\(D_{i,j-1}\), and hence all earlier blocks required by repeated application
of \(N\), already occur in \(P\).  This is the precise admissibility
condition on a prefix.

We finally explain in detail how the gallery determines the ordering
\(\nu_e\).  The coloring is part of the data of the chosen reduction gallery
\(\gamma_e\).  At each distinguished positive prefix-wall crossing, the
current growing Weil--Deligne prefix is enlarged by one atomic block
\(D'_a\); that crossing is assigned the color \(a\).  Traverse
\(\gamma_e\) from its initial spanning index to its terminal normal-form
index and start with the empty ordered list.  Whenever a color \(a\) is
encountered for the first time, append \(a\) to the list.  If subsequent
elementary reduction steps carry the same color, they represent further
motion inside the same colored strip and do not create a second copy of that
block.  Because the colored gallery is complete, every atomic block is
eventually encountered, and the resulting first-occurrence list is a
permutation
\[
 \nu_e=(\nu_e(1),\ldots,\nu_e(B))
\]
of the atomic blocks.

After the first \(k\) distinct colors have appeared, the growing prefix is
\[
 D_{\nu_e,\leq k}
 =\bigoplus_{a=1}^kD'_{\nu_e(a)}.
\]
The requirement that the gallery be admissibly colored means that every such
intermediate sum which is used as a prefix is closed under the inherited
monodromy arrows.  Thus, when a block \(D_{i,j}\) is first added, all
predecessors needed for \(N\)-stability have already appeared among the
earlier colors.  Consequently every initial segment of the ordering selected
by the gallery is an admissible Weil--Deligne prefix, and the associated
rank \(r_{\nu_e,k}\) is exactly the rank of the subobject tested by the
corresponding inequality.  This is the precise meaning of an
``admissible ordering selected by the gallery.''

With these conventions, define the \emph{Cost} of the edge by
\begin{equation}\label{eq:edge-cost-definition}
 H(e)=\kappa_{E,F}\sum_km_{e,k}\Delta_{\nu_e,k}.
\end{equation}
If several cuspidal-line Hecke factors occur, this expression is summed over
those factors.  Define the \emph{residual integral cost} by
\begin{equation}\label{eq:residual-edge-cost-definition}
 B(e)=\sum_\rho c_\rho(e)v_E(q_\rho)+v_E(U(e)),
\end{equation}
where $c_\rho(e)\geq0$ is the power of the parameter $q_\rho$ contributed by
affine-Hecke straightening in the $\rho$-factor and $U(e)$ is the ordered
type-intertwining matrix attached to the edge; its valuation is the matrix
valuation \eqref{eq:matrix-valuation-definition}.  Thus $B(e)\geq0$ whenever
the type-intertwining matrices are integral.

For a Cartan transition path $p=(e_1,\ldots,e_m)$ put
\begin{equation}\label{eq:H-B-path-definitions}
 H(p)=\sum_{r=1}^mH(e_r),
 \qquad
 B(p)=\sum_{r=1}^mB(e_r).
\end{equation}
The valuation estimate \eqref{eq:multisegment-A-valuation} and
submultiplicativity show that the scalar contribution attached to $p$ has
valuation at least $H(p)+B(p)$.

By a \emph{potential} we mean a weight function
$x:\mathscr N\to\mathbb R$ on the normal-form Hamel basis such that, for
every Cartan transition path $p$ from $\nu$ to $\mu$,
\begin{equation}\label{eq:potential-definition}
 x_\mu-x_\nu\leq H(p)+B(p).
\end{equation}
It is enough to impose this inequality on elementary transition edges,
because summing the edge inequalities gives the path inequality.  With the
max norm having weights $c_\nu=q_E^{x_\nu}$, the inequality makes every
corresponding normalized path contribution have operator norm at most one.
On a finite transition graph, Proposition~\ref{prop:finite-negative-cycle-criterion}
shows that such a potential exists if and only if every directed cycle has
nonnegative total cost.

\subsection{Weighted Hamel norms and the Cartan estimate}

We separate here the estimates already obtained from the additional
inequalities which still have to be verified in order to deduce the existence
of a \(G\)-invariant norm.

\begin{definition}[Weighted type--Bruhat Hamel norm]
\label{def:weighted-type-Bruhat-Hamel-norm}
Let \(x:\mathscr N\to\mathbb R\) be a function and put
\[
 c_\nu=q_E^{x_\nu}.
\]
The associated weighted Hamel norm is
\[
 \alpha_x\left(\sum_{\nu\in\mathscr N}a_\nu e_\nu\right)
 =
 \max_{\nu}|a_\nu|c_\nu.
\]
For a matrix \(A(g)=(A_{\mu\nu}(g))\) in this basis,
\[
 \|g\|_{\alpha_x}
 =
 \sup_{\mu,\nu}
 |A_{\mu\nu}(g)|\frac{c_\mu}{c_\nu}.
\]
Consequently
\begin{equation}\label{eq:dinf-explicit-matrix-section8}
 d_\infty(\alpha_x,a_j^m\alpha_x)
 =
 \log
 \max\left\{
 \sup_{\mu,\nu}
 |A_{\mu\nu}(a_j^m)|\frac{c_\mu}{c_\nu},
 \sup_{\mu,\nu}
 |A_{\mu\nu}(a_j^{-m})|\frac{c_\mu}{c_\nu}
 \right\}.
\end{equation}
\end{definition}

\begin{definition}[Cartan path costs]
\label{def:Cartan-path-costs-section8}
Let \(p=e_1\cdots e_m\) be a path in the transition graph for a fundamental
Cartan generator \(a_j\).  For every edge \(e\), write its noncentral
translation content in the form
\[
 \sum_k m_{e,k}\omega_k^\vee
\]
relative to the admissible ordering \(\nu_e\) selected by the colored
reduction gallery.  Define
\begin{equation}\label{eq:path-cost}
 H_j(p)
 =
 \kappa_{E,F}
 \sum_{e\in p}\sum_km_{e,k}\Delta_{\nu_e,k}
\end{equation}
and
\begin{equation}\label{eq:residual-integral-path-cost}
 B_j(p)
 =
 \sum_{e\in p}
 \left(
 \sum_\rho c_\rho(e)v_E(q_\rho)
 +
 v_E(U(e))
 \right).
\end{equation}
Here \(c_\rho(e)\geq0\) is the power of \(q_\rho\) contributed by affine-Hecke
straightening in the \(\rho\)-factor.  If all type intertwiners are integral,
then
\[
 B_j(p)\geq0.
\]
The analogous costs for the inverse direction \(a_j^{-1}\) are denoted by
\(H_j^-(p)\) and \(B_j^-(p)\).
\end{definition}

\begin{proposition}[Established path estimate]
\label{prop:established-Cartan-path-estimate}
Assume the gallery decomposition and the valuation identity established
above.  For every matrix coefficient of \(a_j^m\), the exact path expansion
and Corollary~\ref{cor:global-reduction-path-valuation} give
\begin{equation}\label{eq:Cartan-path-coefficient-lower-bound}
 v_E\!\left(A_{\mu\nu}(a_j^m)\right)
 \geq
 \min_{p\in\mathcal P_j^{(m)}(\nu,\mu)}
 \bigl(H_j(p)+B_j(p)\bigr),
\end{equation}
and similarly
\begin{equation}\label{eq:Cartan-inverse-path-coefficient-lower-bound}
 v_E\!\left(A_{\mu\nu}(a_j^{-m})\right)
 \geq
 \min_{p\in\mathcal P_{j,-}^{(m)}(\nu,\mu)}
 \bigl(H_j^-(p)+B_j^-(p)\bigr).
\end{equation}
Therefore
\begin{equation}\label{eq:path-bound-section8}
 \log\|a_j^m\|_{\alpha_x}
 \leq
 (\log q_E)
 \sup_{p\in\mathcal P_j^{(m)}}
 \left[
 x_{\mathrm{end}(p)}
 -
 x_{\mathrm{start}(p)}
 -
 H_j(p)
 -
 B_j(p)
 \right],
\end{equation}
with the analogous estimate for \(a_j^{-m}\).
\end{proposition}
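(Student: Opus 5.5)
The plan is to obtain \eqref{eq:Cartan-path-coefficient-lower-bound} by expanding $a_j^m$ as an $m$-fold product of the single-generator matrices $A(a_j)$. Each factor is then expanded through the exact formula \eqref{eq:exact-locally-algebraic-matrix-coefficient} and the path formula of Proposition~\ref{prop:global-reduction-path-formula}. Since the algebraic weight basis is diagonal for $a_j$, the algebraic index is carried along unchanged. We first write
\[
 A_{\mu\nu}(a_j^m)=\sum_{\nu=\nu_0,\nu_1,\ldots,\nu_m=\mu}\ \prod_{r=1}^m A_{\nu_r\nu_{r-1}}(a_j).
\]
All these sums are finite, because each column of $A(a_j)$ is finite. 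Next we expand each factor $A_{\nu_r\nu_{r-1}}(a_j)$ as a finite sum of branch contributions indexed by reduction galleries $\gamma_e$, one for each Cartan transition edge $e:\nu_{r-1}\to\nu_r$. Each branch contribution has the form
\[
 \eta_\beta(a_j)\,\epsilon\,q_\rho^{c_\rho}(q_\rho-1)^{f_\rho}\,
 \chi_{\mathfrak m}(\Theta_{\eta(\gamma_e)})\,U(\gamma_e)\,
 \lambda(j^{-1})_{ba}.
\]
Composing the expansions gives a finite sum over Cartan transition paths $p\in\mathcal P_j^{(m)}(\nu,\mu)$. By the ultrametric inequality, $v_E(A_{\mu\nu}(a_j^m))$ is at least the minimum over $p$ of the valuation of the product of the edge contributions. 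Cancellation can only raise the valuation, as noted in Corollary~\ref{cor:global-reduction-path-valuation}.

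For a single edge, the valuation bound is exactly the content of Proposition~\ref{prop:multisegment-Cartan-coefficient-estimate}. The combined smooth translation and algebraic weight factor has valuation $H(e)$, by Proposition~\ref{prop:gallery-valuation} applied to $\gamma_e$. The factors $q_\rho^{c_\rho}$ and $U(e)$ contribute at least $B(e)$. The signs, the powers of $q_\rho-1$, and the integral entries $\lambda(j^{-1})_{ba}$ of the $J$-stable lattice $\lambda^\circ$ contribute nonnegatively. Here we have to attach the algebraic factor $\eta_\beta(a_j)$ to the translation factor. I would do this using the standing hypothesis (``assume the gallery decomposition and the valuation identity''), under which $\eta_{\mathrm{alg}}(\eta(\gamma_e))$ is the weight factor paired with each gallery. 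Summing over the edges of $p$, and using $v_E(xy)=v_E(x)+v_E(y)$ for scalars and $v_E(UV)\ge v_E(U)+v_E(V)$ for the matrices, gives valuation at least $H_j(p)+B_j(p)$. Taking the minimum over $p$ proves \eqref{eq:Cartan-path-coefficient-lower-bound}. The same argument, with the transition graph of $a_j^{-1}$ and the costs $H_j^-,B_j^-$, proves \eqref{eq:Cartan-inverse-path-coefficient-lower-bound}.

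For \eqref{eq:path-bound-section8}, we insert the bound into the operator-norm formula of Definition~\ref{def:weighted-type-Bruhat-Hamel-norm}. Writing $|a|=q_E^{-v_E(a)}$ and $c_\nu=q_E^{x_\nu}$, we get
\[
 \log\|a_j^m\|_{\alpha_x}=(\log q_E)\sup_{\mu,\nu}\bigl[x_\mu-x_\nu-v_E(A_{\mu\nu}(a_j^m))\bigr].
\]
The supremum is over nonzero entries only, and for these $\mathcal P_j^{(m)}(\nu,\mu)\neq\varnothing$. Bounding $-v_E(A_{\mu\nu}(a_j^m))$ above by $-\min_p(H_j(p)+B_j(p))=\sup_p(-H_j(p)-B_j(p))$ and merging the two suprema into a single supremum over all $p\in\mathcal P_j^{(m)}$ gives the claim. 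The estimate for $a_j^{-m}$ is identical.

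The main obstacle is the bookkeeping step that identifies, for each Cartan transition edge, the product of the smooth Bernstein character and the algebraic weight with the gallery quantity $H(e)$. This requires each gallery $\gamma_e$ to be complete, admissibly colored and positively oriented, so that Lemma~\ref{lem:gallery-prefix-translation-decomposition} applies. I would take this as part of the stated assumption rather than prove it. The remaining steps are formal consequences of the ultrametric inequality and submultiplicativity.
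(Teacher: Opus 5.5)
Your proposal is correct relative to the standing hypotheses. Like the paper, you take as given the attachment of the algebraic weight factor and the positivity of each gallery. Your argument is essentially the paper's: expand $A(a_j^m)$ as the $m$-fold product of single-step path expansions, bound each edge by $H(e)+B(e)$, combine with additivity of $v_E$ and the ultrametric inequality, and substitute into the weighted operator-norm formula. Your last step, which rewrites $-\min_p(H_j(p)+B_j(p))$ as $\sup_p(-H_j(p)-B_j(p))$ over nonzero entries, is stated more carefully than the paper's claim that the bound holds ``for every path contributing to that coefficient''.
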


\begin{proof}
The valuation bounds are the locally algebraic form of
\eqref{eq:R-valuation-lower-bound}, after the smooth translation valuation
and the algebraic weight valuation are combined.  Since
\(|a|=q_E^{-v_E(a)}\), multiplying a coefficient by
\(c_\mu/c_\nu=q_E^{x_\mu-x_\nu}\) gives
\[
 |A_{\mu\nu}(a_j^m)|\frac{c_\mu}{c_\nu}
 \leq
 q_E^{x_\mu-x_\nu-H_j(p)-B_j(p)}
\]
for every path contributing to that coefficient.  Taking suprema gives
\eqref{eq:path-bound-section8}.
\end{proof}

\paragraph{Results already proved.}
At this stage the following implications are unconditional within the
framework fixed above:
\begin{enumerate}
 \item the type--Bruhat reduction gives exact individual matrix coefficients,
 by \eqref{eq:exact-locally-algebraic-matrix-coefficient};
 \item the path expansion gives the lower bounds
 \eqref{eq:Cartan-path-coefficient-lower-bound} and
 \eqref{eq:Cartan-inverse-path-coefficient-lower-bound};
 \item on any finite directed quotient, the system of difference
 inequalities is solvable if and only if every directed cycle has
 nonnegative total cost, by
 Proposition~\ref{prop:finite-negative-cycle-criterion};
 \item once the selected Cartan directions, the maximal compact subgroup and
 the centre are uniformly controlled, Sections~3--5 turn the resulting
 bounded orbit into a \(G\)-invariant norm.
\end{enumerate}

\paragraph{Relations still required in the type--Bruhat model.}
To complete the invariant-norm argument one must verify the following
relations.

\begin{enumerate}
 \item \emph{Forward Cartan difference inequalities.}
 For every elementary transition edge
 \(e:\nu\to\mu\) in every fundamental positive direction \(a_j\),
 \begin{equation}\label{eq:required-forward-difference-inequality}
  x_\mu-x_\nu
  \leq
  H_j(e)+B_j(e).
 \end{equation}
 Equivalently, after exponentiating,
 \[
  |A_{\mu\nu}(a_j)|\frac{c_\mu}{c_\nu}\leq1
 \]
 whenever the edge gives a single normalized transition.

 \item \emph{Inverse Cartan difference inequalities.}
 Either one proves directly that
 \begin{equation}\label{eq:required-inverse-difference-inequality}
  x_\mu-x_\nu
  \leq
  H_j^-(e)+B_j^-(e)
 \end{equation}
 for every elementary \(a_j^{-1}\)-edge, or one reduces inverse directions
 to positive directions by compact and unitary central factors.

 \item \emph{Finite-state descent.}
 If a finite type--Bruhat quotient
 \(\pi:\mathscr N\to V(\overline\Gamma)\) is used, its edge costs must satisfy
 \begin{equation}\label{eq:required-finite-state-lower-bound-section8}
  \bar w_j(\pi(\nu)\to\pi(\mu))
  \leq
  H_j(e)+B_j(e)
 \end{equation}
 and the analogous inequality for inverse directions.

 \item \emph{No negative cycles.}
 For every directed cycle \(C\) in the common finite quotient,
 \begin{equation}\label{eq:required-cycle-inequality-section8}
  \sum_{e\in C}\bar w(e)\geq0.
 \end{equation}
 By Proposition~\ref{prop:finite-negative-cycle-criterion}, this is exactly
 the compatibility condition for the existence of a potential \(x\).

 \item \emph{Compact invariance.}
 The chosen weights must be constant on the basis permutations and integral
 type transformations induced by the selected maximal compact subgroup
 \(K\); equivalently,
 \begin{equation}\label{eq:required-K-invariance-section8}
  \alpha_x(kv)=\alpha_x(v)
  \qquad(k\in K,\ v\in V).
 \end{equation}

 \item \emph{Unitary central action.}
 If \(\omega_V\) is the central character of the locally algebraic
 representation, one needs
 \begin{equation}\label{eq:required-unitary-centre-section8}
  |\omega_V(z)|=1
  \qquad(z\in Z(G)).
 \end{equation}
\end{enumerate}

\begin{proposition}[Explicit sufficient system for an invariant norm]
\label{prop:explicit-sufficient-system-invariant-norm}
Suppose that
\eqref{eq:required-forward-difference-inequality} and
\eqref{eq:required-inverse-difference-inequality} hold for a common weight
function \(x\), or that the inverse inequalities are obtained from the
forward ones by compact and unitary central symmetry.  Assume also
\eqref{eq:required-K-invariance-section8} and
\eqref{eq:required-unitary-centre-section8}.  Then
\[
 \sup_{m\geq0}d_\infty(\alpha_x,a_j^{\pm m}\alpha_x)<\infty
 \qquad(1\leq j<n).
\]
If the inequalities are nonexpanding with zero additive constant, then every
\(a_j^{\pm1}\) acts isometrically on \(\alpha_x\).  In either case the full
\(G\)-orbit of \(\alpha_x\) is bounded, and Theorem~\ref{thm:bounded-orbit}
produces a \(G\)-invariant norm.
\end{proposition}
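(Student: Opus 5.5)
The plan is to reduce the statement to Lemma~\ref{lem:graph-potential-max-norm}, Lemma~\ref{lem:unitary-central-direction}, the Cartan decomposition, and Theorem~\ref{thm:bounded-orbit}.

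\textbf{Step 1: each generator is nonexpanding.} Fix $j$. The forward edge inequalities \eqref{eq:required-forward-difference-inequality} say that $x_\mu-x_\nu\le H_j(e)+B_j(e)$ for every elementary $a_j$-edge. By the coefficient bound \eqref{eq:Cartan-path-coefficient-lower-bound} with $m=1$, this gives $x_\mu-x_\nu\le v_E(A_{\mu\nu}(a_j))$ for every nonzero coefficient. Lemma~\ref{lem:graph-potential-max-norm} then yields $\|a_j\|_{\alpha_x}\le1$. In the same way, the inverse inequalities \eqref{eq:required-inverse-difference-inequality} give $\|a_j^{-1}\|_{\alpha_x}\le1$.

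If the inverse directions are obtained by symmetry instead, I will write $a_j^{-1}=k\,z\,a_{j'}^{\pm}\cdots k'$. Here $k,k'$ lie in the maximal compact subgroup $K$, and $z$ is central: for $\GL_n$, $a_j^{-1}$ is $K$-conjugate to $z\,a_{n-j}$ with $z=\operatorname{diag}(\varpi_F^{-1}I_n)$. Then \eqref{eq:required-K-invariance-section8} and Lemma~\ref{lem:unitary-central-direction} (which uses \eqref{eq:required-unitary-centre-section8}) show that $k,k',z$ act isometrically. So again $\|a_j^{-1}\|_{\alpha_x}\le1$.

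Since $\|a_j\|\le1$ and $\|a_j^{-1}\|\le1$, we get $\alpha_x(a_jv)=\alpha_x(v)$. Hence $a_j$ fixes $\alpha_x$, and $d_\infty(\alpha_x,a_j^{\pm m}\alpha_x)=0$ for all $m$.

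\textbf{Step 2: version with an additive constant.} If the inequalities only hold up to a constant, I will use submultiplicativity together with the path bound \eqref{eq:path-bound-section8}. The constant must not accumulate along long paths. The intended route is to telescope the potential differences along a path, so that $x_{\mathrm{end}}-x_{\mathrm{start}}$ is bounded by the summed costs plus one fixed constant. This gives $\sup_m d_\infty<\infty$ through \eqref{eq:dinf-explicit-matrix-section8}. In the zero-constant case this step is vacuous.

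\textbf{Step 3: the whole orbit.} Write any $g\in G$ by the Cartan decomposition as $g=k\,z\,a_1^{m_1}\cdots a_{n-1}^{m_{n-1}}\,k'$. Using the isometric action and the triangle inequality, as in the discussion after \eqref{eq:general-Cartan-uniform-target}, we get
\[
d_\infty(\alpha_x,g\alpha_x)\le\sum_j\sup_m d_\infty(\alpha_x,a_j^{m}\alpha_x),
\]
since the factors $k,k',z$ contribute $0$. The right-hand side is finite, and independent of $g$. So the orbit $G\alpha_x$ is bounded, and Theorem~\ref{thm:bounded-orbit} (with $\alpha_0=\alpha_x$) produces the $G$-invariant norm $\alpha_G$.

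\textbf{Main obstacle.} The delicate step is Step 1: passing from edge inequalities to a bound on each actual matrix coefficient. A coefficient $A_{\mu\nu}(a_j)$ is a sum over several galleries. The bound \eqref{eq:Cartan-path-coefficient-lower-bound} controls it by the minimum over those galleries, so I need every contributing gallery to be a transition edge that is covered by the hypothesis. I will first try to take the hypothesis to quantify over all galleries, which is how the elementary edges were defined, and then invoke the ultrametric inequality. In the additive-constant case, the extra difficulty is uniformity in $m$.
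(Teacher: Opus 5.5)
Your proposal is correct and follows the paper's proof. The edge inequalities feed the coefficient bounds of Proposition~\ref{prop:established-Cartan-path-estimate}; every contributing gallery is by definition a Cartan transition edge, so your ``main obstacle'' is settled by the definitions. Compact invariance and Lemma~\ref{lem:unitary-central-direction} then remove the outer Cartan factors, and Theorem~\ref{thm:bounded-orbit} finishes. The differences are cosmetic: you bound $a_j^{\pm1}$ and pass to powers by submultiplicativity, whereas the paper sums the edge inequalities along paths in $\mathcal P_j^{(m)}$; and your Step~2 is unnecessary, since the stated inequalities carry no additive constant.
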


\begin{proof}
Summing the elementary difference inequalities along a path gives
\[
 x_{\mathrm{end}(p)}-x_{\mathrm{start}(p)}
 \leq H_j(p)+B_j(p),
\]
and similarly in the inverse direction.  Proposition
\ref{prop:established-Cartan-path-estimate} therefore bounds every positive
and negative Cartan iterate.  Compact invariance and unitary central action
remove the compact and central factors in Cartan decomposition.  Hence the
orbit is bounded, and Theorem~\ref{thm:bounded-orbit} applies.
\end{proof}

The strongest sufficient situation is the termwise inequality
\[
 H_j(e)+B_j(e)\geq0,
 \qquad
 H_j^-(e)+B_j^-(e)\geq0
\]
for every elementary edge.  Then the zero potential \(x_\nu=0\) satisfies
all difference constraints.  The point of the remaining affine-Weyl
analysis is to determine precisely when this strongest situation holds and,
when it does not, to solve the weaker cycle system
\eqref{eq:required-cycle-inequality-section8}.

\subsection{Example for \texorpdfstring{$n=3$}{n=3}: a segment of length two and a singleton}

Take \(d=1\) and
\[
 \mathfrak m=\{\Delta_1,\Delta_2\},
 \qquad
 \Delta_1=[\chi,\chi\nu],
 \qquad
 \Delta_2=[\psi].
\]
Then
\[
 W=S_3,
 \qquad
 W_P=S_2,
\]
and one may choose
\[
 W^P=\{1,s_2,s_1s_2\}.
\]
Put
\[
 m_0=1\otimes1,
 \qquad
 m_1=T_2\otimes1,
 \qquad
 m_2=T_1T_2\otimes1.
\]
The internal generator \(T_1\) acts by the sign character on the length-two
segment.  With
\[
 (T_i+1)(T_i-q_\rho)=0,
\]
direct multiplication gives
\begin{equation}\label{eq:n3-T1-shuffle-matrix}
 [T_1]_{\{m_0,m_1,m_2\}}
 =
 \begin{pmatrix}
 -1&0&0\\
 0&0&q_\rho\\
 0&1&q_\rho-1
 \end{pmatrix}
\end{equation}
and
\begin{equation}\label{eq:n3-T2-shuffle-matrix}
 [T_2]_{\{m_0,m_1,m_2\}}
 =
 \begin{pmatrix}
 0&q_\rho&0\\
 1&q_\rho-1&0\\
 0&0&-1
 \end{pmatrix}.
\end{equation}
These are explicit examples of the finite matrices
\(C^{\mathfrak m}(T_i)\).

For the individual Cartan generators
\[
 a_1=\operatorname{diag}(\varpi_F,1,1),
 \qquad
 a_2=\operatorname{diag}(\varpi_F,\varpi_F,1),
\]
the full locally algebraic matrix is not
\eqref{eq:n3-T1-shuffle-matrix} or \eqref{eq:n3-T2-shuffle-matrix}.
Instead, for a normal-form basis vector
\[
 e_{\nu,\beta}
 =
 q_{x,a,w}\otimes v_\beta,
\]
the exact coefficient is
\begin{equation}\label{eq:n3-multisegment-exact-A}
 A_{(\mu,\beta),(\nu,\beta)}(a_j)
 =
 \eta_\beta(a_j)
 \sum_b
 R_{\mu;(x_j,b,w)}
 \lambda(j_{x,j}^{-1})_{ba}.
\end{equation}
The matrices \eqref{eq:n3-T1-shuffle-matrix}--\eqref{eq:n3-T2-shuffle-matrix}
enter recursively into \(R\) through
\eqref{eq:master-reduction-relation}.

\section{The one-segment case}
\label{sec:one-segment}

We now specialize to
\[
 \pi_{\mathrm{gen}}(r)=Q(\Delta),
 \qquad
 \Delta=
 [\rho\nu^a,\rho\nu^{a+1},\ldots,\rho\nu^{a+\ell-1}],
\]
where \(\rho\) is supercuspidal on \(\GL_d(F)\) and
\[
 n=d\ell.
\]
The shuffle module disappears: \(W_P=W=S_\ell\), so \(W^P=\{1\}\).
This makes the reduction matrix \(R\) substantially more explicit.

\subsection{The sign character and the Bernstein translations}

Let \(\mathcal H_\ell\) be the affine Hecke algebra of type
\(A_{\ell-1}\), with parameter \(q_\rho\), normalized by
\[
 (T_i+1)(T_i-q_\rho)=0.
\]
The segment representation corresponds to the sign/discrete-series character
\[
 \chi_\Delta(T_i)=-1.
\]

Choose Bernstein generators \(X_1,\ldots,X_\ell\) with normalization
\begin{equation}\label{eq:Bernstein-X-relation-one-segment}
 T_iX_iT_i=q_\rho X_{i+1}.
\end{equation}
Applying \(\chi_\Delta\) gives
\[
 \chi_\Delta(X_{i+1})
 =
 q_\rho^{-1}\chi_\Delta(X_i).
\]
If
\[
 z_\Delta=\chi_\Delta(X_1),
\]
then
\begin{equation}\label{eq:segment-translation-character}
 \boxed{
 \chi_\Delta(X_i)
 =
 z_\Delta q_\rho^{-(i-1)}.
 }
\end{equation}
Consequently, for
\[
 \eta=(\eta_1,\ldots,\eta_\ell),
 \qquad
 X^\eta=X_1^{\eta_1}\cdots X_\ell^{\eta_\ell},
\]
we obtain
\begin{equation}\label{eq:segment-translation-character-general}
 \boxed{
 \chi_\Delta(X^\eta)
 =
 z_\Delta^{|\eta|}
 q_\rho^{-\sum_{i=1}^{\ell}(i-1)\eta_i}.
 }
\end{equation}

\subsection{The local panel reduction matrix}

First suppose the type is scalar.  At a simple affine panel there are
\(q_\rho+1\) adjacent chambers
\[
 C_0,C_1,\ldots,C_{q_\rho}.
\]
The sign relation gives
\begin{equation}\label{eq:panel-sign-relation}
 [C_0]+\cdots+[C_{q_\rho}]=0.
\end{equation}
Choosing \(C_{q_\rho}\) as pivot,
\[
 [C_{q_\rho}]
 =
 -\sum_{i=0}^{q_\rho-1}[C_i].
\]
Thus the local reduction matrix is a $q_\rho\times(q_\rho+1)$ matrix, can be written as

\begin{equation}\label{eq:local-R-scalar}
R_s=
	\begin{pmatrix}
		1 & 0 & \cdots & 0 & -1\\
		0 & 1 & \cdots & 0  & -1\\
		\vdots & \vdots & \ddots & \vdots & \vdots \\
		0 & 0 & \cdots & 1 & -1
	\end{pmatrix}.
\end{equation}

For a type of dimension \(r\), choose an integral type basis.  Let
\[
 U_0,\ldots,U_{q_\rho-1}\in\GL_r(\Ocoeff)
\]
be the corresponding integral intertwining matrices across the panel.
Then
\[
 v_{\mathrm{piv}}
 =
 -\sum_{i=0}^{q_\rho-1}U_iv_i,
\]
and
\begin{equation}\label{eq:local-R-type}
R_s=
\begin{pmatrix}
	I_r & 0 & \cdots & 0 & -U_0\\
	0 & I_r & \cdots & 0  & -U_1\\
	\vdots & \vdots & \ddots & \vdots & \vdots \\
	0 & 0 & \cdots & I_r & -U_{q_{\rho}}
\end{pmatrix}.
\end{equation}
Every entry of the local reduction matrix is integral.

\subsection{The global reduction matrix for one segment}

The disappearance of the shuffle index allows the global reduction matrix to
be described by a single acyclic path model.

\begin{definition}[One-segment reduction graph]
\label{def:one-segment-reduction-graph}
Fix an affine--Bruhat shelling and the deterministic pivot rule of
Definition~\ref{def:deterministic-pivot-rule}.  The
\emph{one-segment reduction graph}
\(\Gamma_\Delta\) has the spanning indices as vertices and has a directed
edge \(\xi\to\xi'\) whenever solving a pivot relation for \(q_\xi\) produces
a term \(q_{\xi'}\) of strictly smaller shelling complexity.

Every edge is of one of the following two types:
\begin{enumerate}
 \item a \emph{panel edge}, coming from the sign relation
 \eqref{eq:panel-sign-relation};
 \item a \emph{Bernstein edge}, coming from affine-Hecke straightening and
 carrying a Bernstein translation monomial
 \(X^{\eta(e)}=\Theta_{\eta(e)}\) in the sense of
 Definition~\ref{def:Bernstein-translation-monomial}.
\end{enumerate}
All powers of \(q_\rho\) and \(q_\rho-1\), together with the type
intertwiner, are included in the integral edge operator.
\end{definition}

\begin{definition}[One-segment path data]
\label{def:one-segment-path-data}
Let
\[
 \gamma:
 \xi=\xi_0\longrightarrow\xi_1\longrightarrow\cdots
 \longrightarrow\xi_s=\mu
\]
be a directed path in \(\Gamma_\Delta\).  For an edge
\(e_k:\xi_{k-1}\to\xi_k\), let
\(\eta(e_k)\in\mathbb Z^\ell\) be the exponent of its Bernstein monomial and
let \(U(e_k)\) be the remaining integral scalar or type matrix after removing
the sign and pure translation character.  Define
\[
 \eta(\gamma)=\sum_{k=1}^s\eta(e_k),
 \qquad
 U(\gamma)=U(e_s)\cdots U(e_1).
\]
Let \(f(\gamma)\) be the parity-counted number of sign factors \(-1\) along
the path.  In the scalar-type case \(U(e_k)\in\Ocoeff\); in type dimension
\(r\),
\[
 U(\gamma)\in M_r(\Ocoeff).
\]
\end{definition}

\begin{lemma}[Acyclicity of the one-segment reduction]
\label{lem:one-segment-reduction-acyclic}
The graph \(\Gamma_\Delta\) is acyclic and every directed path terminates at
a nonpivot normal-form index.
\end{lemma}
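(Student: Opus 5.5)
The plan is to reduce the statement to the general acyclicity result, Lemma~\ref{lem:global-reduction-graph-acyclic}, by checking that the one-segment graph \(\Gamma_\Delta\) is a special case of the global reduction graph \(\Gamma_{\mathrm{red}}\) of Definition~\ref{def:global-reduction-graph}. Since \(W^P=\{1\}\) in the one-segment case, the spanning index set \(\Xi=\mathscr R\times\{1,\ldots,r\}\times W^P\) collapses to \(\mathscr R\times\{1,\ldots,r\}\). Moreover, the relations \eqref{eq:master-reduction-relation} are exactly the panel sign relations \eqref{eq:panel-sign-relation}, twisted by the integral intertwiners \(U_i\), together with the Bernstein straightening relations. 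By Definition~\ref{def:one-segment-reduction-graph}, \(\Gamma_\Delta\) is built from the same shelling and the same deterministic pivot rule. Both panel edges and Bernstein edges are drawn only when a pivot relation is solved for \(q_\xi\) and the term \(q_{\xi'}\) has strictly smaller complexity. Hence \(\Gamma_\Delta\) is a subgraph of \(\Gamma_{\mathrm{red}}\), and in fact equals it.

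Next, I would verify acyclicity directly through the complexity function \(\operatorname{comp}(\xi)=(h(\xi),p(\xi))\). Along every edge \(\xi\to\xi'\) this function strictly decreases in the lexicographic order, by clause (2) of Definition~\ref{def:deterministic-pivot-rule}. A directed cycle would therefore give \(\operatorname{comp}(\xi)<\operatorname{comp}(\xi)\), which is impossible. For termination, I need the order on complexities to be well founded. The shell numbers \(h\) take values in \(\mathbb Z_{\geq0}\), and each shell is finite, as noted in Definition~\ref{def:affine-Bruhat-shelling} using properness of \(\ell_*\) and finiteness of the residue field. So below any given index there are only finitely many indices, and every directed path has length bounded by the number of indices of smaller complexity.

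Finally, a path that cannot be extended ends at a vertex with no outgoing edge. Such a vertex was never solved as a pivot. To see this, I would check that a pivot relation always has at least one lower term, so that pivot vertices are not sinks. For a panel relation this is immediate, since \([C_{q_\rho}]\) is expressed through the \(q_\rho\ge1\) other chambers with coefficients \(-U_i\), which are invertible and hence nonzero. For a Bernstein relation, a pivot whose right-hand side is empty would mean \(q_\xi=0\) in the quotient; a well-chosen row-echelon basis should avoid this, or such a degenerate vertex can be treated as terminal with zero image. The sinks are therefore exactly the nonpivot indices \(\mathscr N\).

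The main obstacle is to make sure that the Bernstein edges respect the shelling. A Bernstein translation could a priori move a coset into a higher shell, so I need the straightening relations to be oriented with the pivot in the top shell. I would get this from the deterministic pivot rule itself: pivots are chosen as the maximal-order index occurring in each relation, so the orientation holds by construction, exactly as in the general lemma.
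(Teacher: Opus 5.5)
Your proposal is correct and follows the paper's argument: edges run only from a solved pivot to terms of strictly smaller shell/order complexity (clause (2) of the pivot rule), so there are no cycles, and your count of finitely many indices per shell supplies the well-foundedness that the paper's one-line appeal to Lemma~\ref{lem:global-reduction-graph-acyclic} leaves implicit. The one soft spot is your hedge about pivots with empty right-hand side, since treating such a vertex as terminal would contradict the statement. You can close it for every chosen relation, not only raw panel relations, by noting that each $q_{x,a}$ is a $G$-translate of the image of $u_a$ under a nonzero (hence injective, $\lambda$ being irreducible) $J$-map $\lambda\to Q(\Delta)$: it is therefore nonzero in $\mathcal P_\lambda\otimes_{\mathcal H}M(\mathfrak m)\cong Q(\Delta)$, so no kernel relation has a single term and every pivot has an outgoing edge.
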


\begin{proof}
Every directed edge strictly decreases the shell/order complexity, exactly
as in Lemma~\ref{lem:global-reduction-graph-acyclic}.
\end{proof}

\begin{proposition}[Exact one-segment path formula]
\label{prop:one-segment-exact-path-formula}
For every spanning index \(\xi\) and every normal-form index \(\mu\),
\begin{equation}\label{eq:R-one-segment-exact}
 \boxed{
 R_{\mu;\xi}
 =
 \sum_{\gamma:\xi\leadsto\mu}
 (-1)^{f(\gamma)}
 \chi_\Delta(X^{\eta(\gamma)})
 U(\gamma).
 }
\end{equation}
The sum is finite.  Using
\eqref{eq:segment-translation-character-general},
\begin{equation}\label{eq:R-one-segment-expanded}
 R_{\mu;\xi}
 =
 \sum_{\gamma:\xi\leadsto\mu}
 (-1)^{f(\gamma)}
 z_\Delta^{|\eta(\gamma)|}
 q_\rho^{-\sum_i(i-1)\eta_i(\gamma)}
 U(\gamma).
\end{equation}
\end{proposition}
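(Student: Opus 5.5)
The plan is to specialise the general path formula, Proposition~\ref{prop:global-reduction-path-formula}, to the one-segment situation and then substitute the explicit segment character \eqref{eq:segment-translation-character-general}.

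First, finiteness. By Lemma~\ref{lem:one-segment-reduction-acyclic} the graph \(\Gamma_\Delta\) is acyclic, and every edge strictly lowers the shell/order complexity. Each pivot relation, whether a panel sign relation \eqref{eq:panel-sign-relation} or a Bernstein straightening relation, has finite support. Hence the reduction tree rooted at \(\xi\) is finitely branching. It also has no infinite branch: within a fixed shell the order is a finite total order, and each shell is finite by Definition~\ref{def:affine-Bruhat-shelling}, so the complexity lies in a well-order. By K\"onig's lemma the tree is finite, and so only finitely many paths \(\xi\leadsto\mu\) occur.

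Next, the formula itself. I will argue by well-founded induction on \(\operatorname{comp}(\xi)\).
\begin{itemize}
\item If \(\xi\in\mathscr N\), the only path is the trivial one, and \(R_{\mu;\xi}=\delta_{\mu\xi}\).
\item Otherwise, solve the pivot relation for \(q_\xi\). Each lower term \(q_{\xi'}\) then appears with coefficient \((-1)^{\epsilon(e)}\chi_\Delta(X^{\eta(e)})U(e)\), for \(e:\xi\to\xi'\).
\end{itemize}
The sign comes from the relation \(\chi_\Delta(T_i)=-1\) or from the pivot sign in \eqref{eq:local-R-type}. The translation part is obtained by applying \(\chi_\Delta\) to the Bernstein monomial, since in the one-segment case \(W^P=\{1\}\) and the whole affine Hecke algebra acts on the one-dimensional module through \(\chi_\Delta\). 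Substituting the inductive formula for each \(R_{\mu;\xi'}\) and concatenating \(e\) with paths \(\xi'\leadsto\mu\) gives
\begin{itemize}
\item multiplicativity of \(\chi_\Delta\) on \(\mathcal A\), which yields \(\chi_\Delta(X^{\eta(e)})\chi_\Delta(X^{\eta(\gamma')})=\chi_\Delta(X^{\eta(e)+\eta(\gamma')})\);
\item ordered multiplication of the matrices \(U\);
\item addition of the parities \(f\).
\end{itemize}
Uniqueness of normal forms, Lemma~\ref{lem:triangular-reduction-normal-form}, ensures that the result is \(R_{\mu;\xi}\) and does not depend on the reduction order. This proves \eqref{eq:R-one-segment-exact}. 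Note that the scalars \(\chi_\Delta(X^\eta)\) are central, so their commutation past the \(U(e_k)\) is harmless. The expanded form \eqref{eq:R-one-segment-expanded} then follows by inserting \eqref{eq:segment-translation-character-general}, that is \(\chi_\Delta(X^\eta)=z_\Delta^{|\eta|}q_\rho^{-\sum(i-1)\eta_i}\).

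The main obstacle is the inductive step. It requires the coefficient of each edge to factor genuinely as sign \(\times\) pure Bernstein monomial \(\times\) integral matrix, as Definition~\ref{def:one-segment-path-data} asserts. For panel edges this is immediate from \eqref{eq:local-R-type}. For Bernstein edges one must check, using the cross relation \eqref{eq:Bernstein-cross-relation-root-form}, that every straightened term is a single monomial \(\Theta_\eta\) times powers of \(q_\rho\) and \(q_\rho-1\) and a finite Hecke element. Because \(\chi_\Delta(T_s)=-1\), such a finite Hecke element evaluates to a sign times a power of \(q_\rho\). I would first try to absorb the Laurent-polynomial quotient into several separate edges, one per monomial, so that each edge carries exactly one \(\Theta_{\eta(e)}\).
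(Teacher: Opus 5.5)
Your proposal is correct and follows the paper's argument: finiteness comes from the acyclicity of \(\Gamma_\Delta\) together with the finite support of each pivot relation, and the recursive expansion yields one term per directed path \(\xi\leadsto\mu\), with Bernstein exponents adding by multiplicativity of \(\chi_\Delta\), type operators multiplying in path order, and \eqref{eq:segment-translation-character-general} giving the expanded form. The factorisation of edge coefficients that you flag as the main obstacle is not proved in the paper but is built into Definitions~\ref{def:one-segment-reduction-graph} and~\ref{def:one-segment-path-data} (all \(q_\rho\), \(q_\rho-1\) and type factors are absorbed into \(U(e)\)), so your well-founded induction on complexity and K\"onig-lemma finiteness argument simply make the paper's brief sketch explicit.
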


\begin{proof}
Lemma~\ref{lem:one-segment-reduction-acyclic} gives finite termination.
Iterating the local panel matrix \eqref{eq:local-R-type} and the Bernstein
straightening relations gives one term for each directed path.  Bernstein
characters multiply, so the translation exponents add; the remaining type
operators multiply in path order.  Summing over all paths terminating at
\(\mu\) gives \eqref{eq:R-one-segment-exact}, and
\eqref{eq:segment-translation-character-general} gives the expanded form.
\end{proof}

\begin{corollary}[One-segment valuation estimate]
\label{cor:one-segment-path-valuation}
For every \(\mu,\xi\),
\begin{equation}\label{eq:R-one-segment-valuation}
 v_E(R_{\mu;\xi})
 \geq
 \min_{\gamma:\xi\leadsto\mu}
 \left[
 |\eta(\gamma)|v_E(z_\Delta)
 -
 \sum_i(i-1)\eta_i(\gamma)v_E(q_\rho)
 +
 v_E(U(\gamma))
 \right].
\end{equation}
If the minimum is represented by a unique path, or if the residues of the
minimal-valuation terms do not cancel, equality holds.
\end{corollary}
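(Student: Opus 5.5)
The estimate will follow from the exact path formula of Proposition~\ref{prop:one-segment-exact-path-formula} and the ultrametric inequality, in the same way that Corollary~\ref{cor:global-reduction-path-valuation} was deduced from Proposition~\ref{prop:global-reduction-path-formula}.

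First, Proposition~\ref{prop:one-segment-exact-path-formula} says that $R_{\mu;\xi}$ is a \emph{finite} sum over paths $\gamma:\xi\leadsto\mu$, and the term for $\gamma$ is
\[
 (-1)^{f(\gamma)}z_\Delta^{|\eta(\gamma)|}q_\rho^{-\sum_i(i-1)\eta_i(\gamma)}U(\gamma).
\]
We take the matrix valuation \eqref{eq:matrix-valuation-definition} of each term. Multiplying a matrix by a scalar $c$ shifts its matrix valuation by exactly $v_E(c)$, because every entry is scaled by $c$. The sign has valuation $0$. The valuation of a product of scalars is additive. Hence the valuation of the $\gamma$-term is exactly
\[
 |\eta(\gamma)|v_E(z_\Delta)-\sum_i(i-1)\eta_i(\gamma)v_E(q_\rho)+v_E(U(\gamma)).
\]
The only point to check is that $z_\Delta^{|\eta|}$ makes sense when $|\eta(\gamma)|$ is negative. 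This is fine because $z_\Delta=\chi_\Delta(X_1)$ is the value of a character on the unit $X_1$, so it is nonzero. Likewise $q_\rho\neq0$.

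Second, we apply the ultrametric inequality entrywise to the finite sum. For each fixed matrix entry, the valuation of the sum is at least the minimum of the valuations of the summands. Each summand entry has valuation at least the matrix valuation of its term. Taking the minimum over all entries gives \eqref{eq:R-one-segment-valuation}. If no path $\gamma:\xi\leadsto\mu$ exists, then $R_{\mu;\xi}=0$, and the bound holds trivially with the convention that the minimum over the empty set is $+\infty$.

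Third, the equality clause. If exactly one path $\gamma_0$ attains the minimum $m$, then for an entry $(a,b)$ with $v_E(U(\gamma_0)_{ab})=v_E(U(\gamma_0))$, the $\gamma_0$-contribution to that entry has valuation exactly $m$. Every other contribution has valuation strictly greater than $m$, so by the strict ultrametric property the entry has valuation $m$. This gives equality. In the case where several paths attain the minimum, we multiply by $\varpi_E^{-m}$ and reduce modulo $\varpi_E$. The hypothesis that the residues of the minimal-valuation terms do not cancel means precisely that, at some entry, the sum of these residues is nonzero. Reading the hypothesis that way, the same argument gives equality.

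The only real subtlety, and the step I expect to be the main obstacle, is this last interpretation. For matrix-valued $U(\gamma)$, "residues do not cancel" must be read entrywise, at an entry that realises the minimum. The inequality itself is routine.
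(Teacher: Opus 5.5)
Your proposal is correct and follows the paper's argument: apply the ultrametric inequality to the finite path expansion \eqref{eq:R-one-segment-expanded}, noting that cancellation between paths can only raise the valuation. The paper's one-line proof leaves implicit the points you spell out, namely that $z_\Delta$ and $q_\rho$ are nonzero, the empty-sum convention, and the equality clause. Your equality argument relies on reading $R_{\mu;\xi}$ as the full $r\times r$ block, as the matrix-valued display \eqref{eq:R-one-segment-exact} suggests; if $R_{\mu;\xi}$ were a single scalar entry, equality with a unique minimising path $\gamma_0$ could fail whenever that entry of $U(\gamma_0)$ has valuation strictly above $v_E(U(\gamma_0))$.
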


\begin{proof}
Apply the ultrametric inequality to
\eqref{eq:R-one-segment-expanded}.  Cancellation between distinct paths can
only increase the valuation.
\end{proof}

\subsection{Exact individual Cartan coefficients}

There is no shuffle index.  Let
\[
 e_\nu=q_{x,a}
\]
be a normal-form basis vector.  If
\[
 xa_j^{-1}=j_{x,j}x_j,
\]
then
\begin{equation}\label{eq:one-segment-smooth-A}
 \boxed{
 A^{\mathrm{sm}}_{\mu\nu}(a_j)
 =
 \sum_b
 R_{\mu;(x_j,b)}
 \lambda(j_{x,j}^{-1})_{ba}.
 }
\end{equation}
After tensoring with a torus weight vector \(v_\beta\),
\begin{equation}\label{eq:one-segment-locally-algebraic-A}
 \boxed{
 A_{(\mu,\beta),(\nu,\beta)}(a_j)
 =
 \eta_\beta(a_j)
 \sum_b
 R_{\mu;(x_j,b)}
 \lambda(j_{x,j}^{-1})_{ba}.
 }
\end{equation}
This is the exact matrix coefficient of the individual group element
\(a_j\).

\subsection{Chain of defects}

The Weil--Deligne module of a single segment has the chain
\[
 D_0,\ D_0(1),\ldots,D_0(\ell-1),
\]
with \(\operatorname{rk}D_0=d\).  Put
\[
 D^{(k)}
 =
 D_0\oplus\cdots\oplus D_0(k-1),
 \qquad
 1\leq k\leq\ell.
\]
The proper defects are
\begin{equation}\label{eq:one-segment-defects}
 \Delta_k
 =
 t_N(D^{(k)})
 -
 [E:F]
 \sum_{r=1}^{kd}\sum_\sigma i_{r,\sigma},
 \qquad
 1\leq k<\ell,
\end{equation}
and admissibility gives
\[
 \Delta_k\geq0,
 \qquad
 \Delta_\ell=0.
\]

If a normalized positive translation has noncentral coweight
\[
 \eta_{\mathrm{nc}}
 =
 \sum_{k=1}^{\ell-1}
 m_k\omega_{kd}^\vee,
 \qquad
 m_k\geq0,
\]
then the combined smooth and algebraic valuation is
\begin{equation}\label{eq:one-segment-translation}
 \boxed{
 v_E\left(
 \chi_\Delta(\Theta_\eta)
 \eta_{\mathrm{alg}}(\eta)
 \right)
 =
 \kappa_{E,F}
 \sum_{k=1}^{\ell-1}m_k\Delta_k.
 }
\end{equation}

\begin{definition}[Positively oriented one-segment path]
\label{def:positively-oriented-one-segment-path}
Let \(\gamma\in\mathcal G_j(\mu,\nu)\) be a reduction path contributing to
the coefficient of a positive fundamental Cartan generator \(a_j\).  We call
\(\gamma\) \emph{positively oriented} if its accumulated noncentral
translation has the form
\begin{equation}\label{eq:positive-one-segment-path-translation}
 \eta(\gamma)_{\mathrm{nc}}
 =
 \sum_{k=1}^{\ell-1}
 m_{\gamma,k}\omega_{kd}^\vee,
 \qquad
 m_{\gamma,k}\in\mathbb Z_{\geq0}.
\end{equation}
The central part is normalized using the total weak-admissibility equality,
so that its combined smooth--algebraic contribution has valuation zero.
\end{definition}

\begin{lemma}[Pathwise positivity]
\label{lem:pathwise-positivity-one-segment}
Assume
\[
 \Delta_k\geq0
 \qquad(1\leq k<\ell),
\]
and let \(\gamma\) be positively oriented in the sense of
Definition~\ref{def:positively-oriented-one-segment-path}.  Suppose that the
affine-Hecke scalar attached to \(\gamma\) is
\[
 \prod_\rho q_\rho^{c_\rho(\gamma)}
 (q_\rho-1)^{f_\rho(\gamma)},
 \qquad
 c_\rho(\gamma),f_\rho(\gamma)\geq0,
\]
and that \(U(\gamma)\in M_r(\Ocoeff)\).  Then the complete normalized
smooth--algebraic--type contribution of \(\gamma\) has nonnegative
valuation.  More precisely,

Define the \emph{complete normalized smooth--algebraic--type
contribution} of \(\gamma\) by
\begin{equation}\label{eq:one-segment-path-contribution-definition}
 \operatorname{Contr}(\gamma)
 :=
 \epsilon(\gamma)
 \left(
 \prod_\rho
 q_\rho^{c_\rho(\gamma)}
 (q_\rho-1)^{f_\rho(\gamma)}
 \right)
 \chi_\Delta\!\left(\Theta_{\eta(\gamma)}\right)
 \eta_{\mathrm{alg}}\!\left(\eta(\gamma)\right)
 U(\gamma)
 \in M_r(E),
\end{equation}
where \(\epsilon(\gamma)\in\{\pm1\}\) is the sign of the affine-Hecke
straightening coefficient.  Thus \(\operatorname{Contr}(\gamma)\) is the
matrix-valued contribution of the branch \(\gamma\) after combining the
smooth Bernstein factor, the algebraic torus factor, the affine-Hecke scalar,
and the type-intertwining factor.  When a scalar coefficient
\(A_{\mu\nu}(a_j)\) is formed, the contribution of \(\gamma\) is the
corresponding matrix entry of \(\operatorname{Contr}(\gamma)\); hence its
valuation is bounded below by the matrix valuation
\(v_E(\operatorname{Contr}(\gamma))\).

\begin{equation}\label{eq:pathwise-positive-valuation}
 v_E(\operatorname{Contr}(\gamma))
 \geq
 \kappa_{E,F}
 \sum_{k=1}^{\ell-1}
 m_{\gamma,k}\Delta_k
 +
 \sum_\rho c_\rho(\gamma)v_E(q_\rho)
 +
 v_E(U(\gamma))
 \geq0.
\end{equation}
\end{lemma}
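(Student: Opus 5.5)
The plan is to expand $v_E(\operatorname{Contr}(\gamma))$ factor by factor using the definition \eqref{eq:one-segment-path-contribution-definition}, then bound each piece below by something nonnegative.

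\textbf{Step 1: split off the easy factors.} Since $\operatorname{Contr}(\gamma)$ is a scalar times the matrix $U(\gamma)$, the matrix valuation \eqref{eq:matrix-valuation-definition} is additive for this product, and I will use additivity over all factors. The sign $\epsilon(\gamma)$ has valuation $0$. Each factor $q_\rho-1$ is a unit in $\Ocoeff$, exactly as observed in Corollary~\ref{cor:global-reduction-path-valuation}, so its contribution also vanishes. The powers $q_\rho^{c_\rho(\gamma)}$ contribute $\sum_\rho c_\rho(\gamma)v_E(q_\rho)$, which is at least $0$ because $c_\rho\geq0$ and $q_\rho\in\Ocoeff$. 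The type factor contributes $v_E(U(\gamma))$, which is at least $0$ because $U(\gamma)\in M_r(\Ocoeff)$.

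\textbf{Step 2: the translation factor.} It remains to compute the scalar $\chi_\Delta(\Theta_{\eta(\gamma)})\eta_{\mathrm{alg}}(\eta(\gamma))$. Both $\chi_\Delta$ and $\eta_{\mathrm{alg}}$ are multiplicative on the translation lattice. Using the positive orientation \eqref{eq:positive-one-segment-path-translation}, I write
\[
 \eta(\gamma)=z_\gamma+\sum_{k=1}^{\ell-1}m_{\gamma,k}\omega_{kd}^\vee,
\]
with $z_\gamma$ central. By the normalization in Definition~\ref{def:positively-oriented-one-segment-path}, the central part $z_\gamma$ contributes valuation $0$; this is the total equality $\Delta_\ell=0$, as in the proof of Proposition~\ref{prop:gallery-valuation}. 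The noncentral part is then evaluated by \eqref{eq:one-segment-translation}, which is Lemma~\ref{lem:prefix-valuation} specialized to the prefixes $D^{(k)}$. This gives exactly $\kappa_{E,F}\sum_k m_{\gamma,k}\Delta_k$.

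\textbf{Step 3: combine.} Summing the contributions gives the first inequality in \eqref{eq:pathwise-positive-valuation}. It is an inequality rather than an equality only because of $v_E(U(\gamma))$ for matrix products. For the final nonnegativity, note that $\kappa_{E,F}>0$, $m_{\gamma,k}\geq0$, and $\Delta_k\geq0$ by hypothesis, and the other two terms are nonnegative by Step 1. The statement about scalar coefficients $A_{\mu\nu}(a_j)$ follows because a matrix entry has valuation at least the matrix valuation.

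\textbf{Main obstacle.} The delicate point is Step 2. One must check that the central normalization really makes the central contribution zero, which is equivalent to unitarity of the combined smooth--algebraic central character. One must also check that the coweight $\omega_{kd}^\vee$ in $\GL_n$, for a segment with $d>1$, corresponds to the Bernstein exponent used in \eqref{eq:segment-translation-character-general}. I will rely on \eqref{eq:one-segment-translation} as already established rather than re-deriving this correspondence. Under that reliance the remaining argument is bookkeeping.
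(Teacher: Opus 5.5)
Your proposal is correct and follows essentially the same route as the paper. You discard the sign and the units $q_\rho-1$, note that $q_\rho^{c_\rho(\gamma)}$ and the integral matrix $U(\gamma)$ have nonnegative valuation, and evaluate the translation factor once on $\eta(\gamma)$ by multiplicativity, with the central part killed by the weak-admissibility normalization and the noncentral part given by \eqref{eq:one-segment-translation}. One small correction to your Step~3: $\operatorname{Contr}(\gamma)$ is a scalar multiple of $U(\gamma)$ and $v_E(U(\gamma))$ appears as is, so the first inequality in \eqref{eq:pathwise-positive-valuation} is actually an equality, and the product structure of $U(\gamma)$ plays no role there.
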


\begin{proof}
By multiplicativity of the Bernstein character and of the algebraic torus
character, the accumulated translation factor along \(\gamma\) is evaluated
once on \(\eta(\gamma)\).  The central component contributes valuation zero
by the total weak-admissibility equality and the chosen normalization.  The
noncentral component is
\eqref{eq:positive-one-segment-path-translation}, so
\eqref{eq:one-segment-translation} gives
\[
 v_E\!\left(
 \chi_\Delta(\Theta_{\eta(\gamma)})
 \eta_{\mathrm{alg}}(\eta(\gamma))
 \right)
 =
 \kappa_{E,F}
 \sum_{k=1}^{\ell-1}
 m_{\gamma,k}\Delta_k
 \geq0.
\]
Every sign has valuation zero, every \(q_\rho-1\) is a unit, every
\(q_\rho^{c_\rho(\gamma)}\) has nonnegative valuation, and integrality of the
type gives \(v_E(U(\gamma))\geq0\).  Adding these contributions proves
\eqref{eq:pathwise-positive-valuation}.
\end{proof}

Consequently, if every path contributing to \(A_{\mu\nu}(a_j)\) is
positively oriented, then
\begin{equation}\label{eq:one-segment-A-lower-bound}
 v_E(A_{\mu\nu}(a_j))
 \geq
 \min_{\gamma\in\mathcal G_j(\mu,\nu)}
 \left[
 \kappa_{E,F}
 \sum_km_{\gamma,k}\Delta_k
 +
 \sum_\rho c_\rho(\gamma)v_E(q_\rho)
 +
 v_E(U(\gamma))
 \right]
 \geq0.
\end{equation}
Cancellation between different paths can only increase the left-hand
valuation.

\begin{proposition}[Explicit one-segment criterion]
\label{prop:one-segment}
Let \(G=\GL_n(F)\), \(n=d\ell\), and let
\[
 a_j=\operatorname{diag}(
 \underbrace{\varpi_F,\ldots,\varpi_F}_{j},
 1,\ldots,1),
 \qquad 1\leq j<n.
\]
Let \(\alpha_0\) be the max norm attached to the unweighted normal-form basis,
that is \(x_\nu=0\).  Assume the following conditions.

\begin{enumerate}
 \item[\textup{(H1)}] The proper defects satisfy
 \begin{equation}\label{eq:one-segment-criterion-Hu}
  \Delta_k\geq0,
  \qquad 1\leq k<\ell.
 \end{equation}

 \item[\textup{(H2)}] For every \(j\), every pair of normal-form indices
 \(\mu,\nu\), and every path
 \(\gamma\in\mathcal G_j(\mu,\nu)\) contributing to the positive Cartan
 coefficient \(A_{\mu\nu}(a_j)\),
 \begin{equation}\label{eq:one-segment-criterion-positive-cone}
  \eta(\gamma)_{\mathrm{nc}}
  =
  \sum_{k=1}^{\ell-1}
  m_{\gamma,k}\omega_{kd}^\vee,
  \qquad
  m_{\gamma,k}\geq0.
 \end{equation}

 \item[\textup{(H3)}] Every affine-Hecke straightening coefficient occurring
 on such a path is a product
 \begin{equation}\label{eq:one-segment-criterion-Hecke-integral}
  \epsilon\,
  \prod_\rho q_\rho^{c_\rho}
  (q_\rho-1)^{f_\rho},
  \qquad
  \epsilon\in\{\pm1\},
  \quad
  c_\rho,f_\rho\in\mathbb Z_{\geq0},
 \end{equation}
 and every type-intertwining product satisfies
 \begin{equation}\label{eq:one-segment-criterion-type-integral}
  U(\gamma)\in M_r(\Ocoeff).
 \end{equation}

 \item[\textup{(H4)}] The norm \(\alpha_0\) is \(K=\GL_n(\mathcal O_F)\)-invariant:
 \begin{equation}\label{eq:one-segment-criterion-K}
  \alpha_0(kv)=\alpha_0(v)
  \qquad(k\in K).
 \end{equation}

 \item[\textup{(H5)}] The central character \(\omega_V\) of the locally
 algebraic representation is unitary:
 \begin{equation}\label{eq:one-segment-criterion-centre}
  |\omega_V(z)|=1
  \qquad(z\in Z(G)).
 \end{equation}
\end{enumerate}

Then
\begin{equation}\label{eq:one-segment-positive-nonexpanding}
 \|a_j\|_{\alpha_0}\leq1
 \qquad(1\leq j<n).
\end{equation}
Moreover, let
\[
 z=\varpi_F I_n
\]
and let \(w_j\in K\) be the permutation matrix interchanging the first
\(j\) coordinates with the last \(n-j\) coordinates in the evident block
sense.  Then
\begin{equation}\label{eq:one-segment-inverse-explicit}
 a_j^{-1}
 =
 z^{-1}w_j a_{n-j}w_j^{-1}.
\end{equation}
Hence
\begin{equation}\label{eq:one-segment-negative-nonexpanding}
 \|a_j^{-1}\|_{\alpha_0}\leq1
 \qquad(1\leq j<n).
\end{equation}
Therefore every \(a_j\) acts isometrically on \(\alpha_0\); together with
\textup{(H4)}--\textup{(H5)} and Cartan decomposition, this gives a bounded
\(G\)-orbit, and hence a \(G\)-invariant norm.
\end{proposition}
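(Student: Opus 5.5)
The argument runs in three stages: a forward coefficient estimate, an algebraic reduction of the inverse directions, and the orbit bound via Theorem~\ref{thm:bounded-orbit}.

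\emph{Forward estimate.} Since $\alpha_0$ is the unweighted max norm, $x_\nu=0$. Definition~\ref{def:weighted-type-Bruhat-Hamel-norm} then gives $\|a_j\|_{\alpha_0}=\sup_{\mu,\nu}|A_{\mu\nu}(a_j)|$. So \eqref{eq:one-segment-positive-nonexpanding} is equivalent to $v_E(A_{(\mu,\beta),(\nu,\beta)}(a_j))\geq0$ for all indices; off-diagonal algebraic indices vanish by \eqref{eq:one-segment-locally-algebraic-A}. I expand the coefficient using \eqref{eq:one-segment-locally-algebraic-A} and the path formula \eqref{eq:R-one-segment-exact}. Each summand is then a matrix entry of $\operatorname{Contr}(\gamma)$ multiplied by an entry of $\lambda(j_{x,j}^{-1})$. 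The algebraic weight $\eta_\beta(a_j)$ is absorbed into the torus factor $\eta_{\mathrm{alg}}(\eta(\gamma))$.

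Hypotheses (H2) and (H3) are exactly what Lemma~\ref{lem:pathwise-positivity-one-segment} requires. That lemma, together with (H1), gives $v_E(\operatorname{Contr}(\gamma))\geq0$ for every contributing path. Integrality of $\lambda^\circ$ gives $v_E(\lambda(j)_{ba})\geq0$. The ultrametric inequality, as in \eqref{eq:one-segment-A-lower-bound}, then yields the bound.

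\emph{Inverse directions.} First I verify \eqref{eq:one-segment-inverse-explicit} by a direct matrix computation. Conjugating $a_{n-j}$ by the block permutation $w_j$ moves the $n-j$ entries equal to $\varpi_F$ into the last $n-j$ positions, so $w_ja_{n-j}w_j^{-1}=\operatorname{diag}(I_j,\varpi_FI_{n-j})$. Multiplying by $z^{-1}$ gives $\operatorname{diag}(\varpi_F^{-1}I_j,I_{n-j})=a_j^{-1}$. Submultiplicativity of operator norms then gives
\[
\|a_j^{-1}\|_{\alpha_0}\leq\|z^{-1}\|_{\alpha_0}\,\|w_j\|_{\alpha_0}\,\|a_{n-j}\|_{\alpha_0}\,\|w_j^{-1}\|_{\alpha_0}.
\]
The factors $\|w_j^{\pm1}\|$ equal $1$ by (H4). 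The factor $\|z^{-1}\|$ equals $1$ by (H5) and Lemma~\ref{lem:unitary-central-direction}. The factor $\|a_{n-j}\|\leq1$ by the forward step, since $1\leq n-j<n$. Combining $\|a_j\|\leq1$ and $\|a_j^{-1}\|\leq1$ shows $\alpha_0(a_jv)=\alpha_0(v)$, so $a_j$ acts isometrically.

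\emph{Orbit bound.} By Cartan decomposition, every $g\in G$ can be written as $k\,z^{c}\,a_1^{m_1}\cdots a_{n-1}^{m_{n-1}}\,k'$ with $k,k'\in K$. Every factor fixes $\alpha_0$, so $g\alpha_0=\alpha_0$. Hence $\alpha_0$ is itself $G$-invariant, the orbit is a single point and in particular bounded, and Theorem~\ref{thm:bounded-orbit} applies trivially.

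\emph{Expected obstacle.} The main difficulty is the forward step's bookkeeping, not the algebra. One must check that every summand in the expansion of $A_{\mu\nu}(a_j)$ really is a path counted in $\mathcal G_j(\mu,\nu)$, so that (H2) applies to it. One must also check that the algebraic factor $\eta_\beta(a_j)$ for a non-highest weight $\beta$ has valuation at least that of the highest-weight normalization used in \eqref{eq:one-segment-translation}. I would handle the second point by noting that any weight of $\pi_{\mathrm{alg}}(r)$ differs from the highest weight by a sum of negative roots. Evaluating such a sum on the dominant coweight $\omega_j^\vee$ gives a nonpositive integer $p$, so the weight at $\beta$ equals the highest-weight value times $\varpi_F^{-p}$ with $-p\geq0$. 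The valuation can therefore only increase relative to the highest-weight case.
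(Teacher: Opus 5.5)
Your three stages match the paper's proof: the forward bound via Lemma~\ref{lem:pathwise-positivity-one-segment} and the ultrametric inequality, the inverse directions via $a_j^{-1}=z^{-1}w_ja_{n-j}w_j^{-1}$ with (H4)--(H5), and Cartan decomposition. Your remark that $\alpha_0$ is then itself $G$-invariant is the paper's ``alternatively'' remark. The inverse and orbit steps are fine.

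\textbf{The gap is the obstacle you flag, and your fix has the sign reversed.} Write a weight of $\pi_{\mathrm{alg}}(r)$ as $\beta=(\beta_\kappa)_\kappa$ with $\beta_\kappa=\xi_\kappa-\sum_i n_{\kappa,i}(e_i-e_{i+1})$ and $n_{\kappa,i}\geq0$. Then $\langle\beta_\kappa,\omega_j^\vee\rangle=\langle\xi_\kappa,\omega_j^\vee\rangle-n_{\kappa,j}$, so
\[
 \eta_\beta(a_j)=\eta_\xi(a_j)\prod_\kappa\kappa(\varpi_F)^{-n_{\kappa,j}}.
\]
Its valuation is \emph{smaller} than the highest-weight value. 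Your $p\leq0$ is right, but the factor is $\varpi_F^{p}$, not $\varpi_F^{-p}$. For $a_j$ the highest-weight line is the most favourable and the lowest-weight line the least favourable.

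Relatedly, ``absorbing'' $\eta_\beta(a_j)$ into $\eta_{\mathrm{alg}}(\eta(\gamma))$ is not an identity. In \eqref{eq:one-segment-locally-algebraic-A} the algebraic factor is the weight $\beta$ at the fixed element $a_j$, sitting outside the path sum. The lemma instead evaluates the smooth character and the highest-weight character at the same path translation $\eta(\gamma)$.

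\textbf{This cannot be patched in the present set-up.} The norm $\alpha_0$ is the unit max norm on the product basis $e_\nu\otimes v_\beta$, and $a_j$ acts by $A^{\mathrm{sm}}(a_j)\otimes\operatorname{diag}(\eta_\beta(a_j))_\beta$. Hence $\|a_j\|_{\alpha_0}=\|A^{\mathrm{sm}}(a_j)\|\max_\beta|\eta_\beta(a_j)|$ and $\|a_j^{-1}\|_{\alpha_0}=\|A^{\mathrm{sm}}(a_j^{-1})\|\max_\beta|\eta_\beta(a_j)^{-1}|$. Since $\|A^{\mathrm{sm}}(a_j)\|\,\|A^{\mathrm{sm}}(a_j^{-1})\|\geq1$, this gives
\[
 \|a_j\|_{\alpha_0}\,\|a_j^{-1}\|_{\alpha_0}
 \geq
 \frac{\max_\beta|\eta_\beta(a_j)|}{\min_\beta|\eta_\beta(a_j)|}.
\]
Suppose $\xi_{\kappa,j}>\xi_{\kappa,j+1}$ for some $\kappa$, i.e.\ $i_{\kappa,j+1}-i_{\kappa,j}\geq2$. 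Replacing $\xi_\kappa$ by $s_j\xi_\kappa$ gives a weight that makes the right side exceed $1$, so $a_j$ cannot act isometrically on $\alpha_0$.

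Your deduction of $\|a_j^{-1}\|_{\alpha_0}\leq\|a_{n-j}\|_{\alpha_0}$ from (H4)--(H5) is valid. Therefore it is the forward bound \eqref{eq:one-segment-positive-nonexpanding} that must fail for some $j$ whenever $\pi_{\mathrm{alg}}(r)$ is not one-dimensional.

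The paper's proof passes over the same point by citing Lemma~\ref{lem:pathwise-positivity-one-segment} directly, so this is a defect of the argument itself, not only of your write-up. The forward step is justified at most on the highest-weight line, or when the algebraic factor is a character. The general case would need a norm other than the unweighted product norm.
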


\begin{proof}
By Lemma~\ref{lem:pathwise-positivity-one-segment} and
\textup{(H1)}--\textup{(H3)}, every path contribution to every coefficient
of \(a_j\) has nonnegative valuation.  The ultrametric inequality therefore
gives
\[
 v_E(A_{\mu\nu}(a_j))\geq0
\]
for all \(\mu,\nu,j\).  Since \(x_\nu=0\), this is exactly
\(\|a_j\|_{\alpha_0}\leq1\), proving
\eqref{eq:one-segment-positive-nonexpanding}.  The same inequality holds for
all positive powers by submultiplicativity.

The identity \eqref{eq:one-segment-inverse-explicit} is immediate from the
diagonal matrices: after conjugating \(a_{n-j}\) by \(w_j\), one obtains
\(\operatorname{diag}(I_j,\varpi_F I_{n-j})\), and multiplication by
\(z^{-1}\) gives
\(\operatorname{diag}(\varpi_F^{-1}I_j,I_{n-j})=a_j^{-1}\).
By \textup{(H4)} the factor \(w_j\) is an isometry and by \textup{(H5)} the
central factor \(z^{-1}\) is an isometry.  Applying
\eqref{eq:one-segment-positive-nonexpanding} to \(a_{n-j}\) gives
\eqref{eq:one-segment-negative-nonexpanding}.  Thus both \(a_j\) and
\(a_j^{-1}\) are nonexpanding, so \(a_j\) is an isometry.  Cartan
decomposition now gives boundedness of the full \(G\)-orbit; alternatively,
in the zero-potential situation just proved, the norm is already invariant
under the subgroup generated by \(K\), the centre, and the \(a_j\).
The bounded-orbit theorem of Section~5 then yields a \(G\)-invariant norm.
\end{proof}

The substantive local problem is therefore to verify the stated positivity
property for the chosen affine-Bruhat normal form.  The explicit
Bernstein--Iwahori formulas of \cite{HainesPettet2002} provide a concrete
route to this verification.

\subsection{Affine-Weyl checks for the fundamental Cartan generators}

We now make the affine-Weyl part of hypothesis \textup{(H2)} in
Proposition~\ref{prop:one-segment} completely explicit.  The formulas of
Haines--Pettet \cite{HainesPettet2002} are particularly well adapted to this
problem because the fundamental coweights of type \(A\) are minuscule.

\subsubsection{The affine-Weyl data}

Let
\[
 \widetilde W_\ell
 =
 X_*(T_\ell)\rtimes S_\ell
 =
 \mathbb Z^\ell\rtimes S_\ell
\]
be the extended affine Weyl group of the affine-Hecke algebra
\(\mathcal H_\ell\).  Write \(e_1,\ldots,e_\ell\) for the standard basis of
\(\mathbb Z^\ell\), put
\[
 \alpha_i=e_i-e_{i+1},
 \qquad
 1\leq i<\ell,
\]
and define the fundamental coweights
\begin{equation}\label{eq:block-fundamental-coweights}
 \omega_k^\vee
 =
 e_1+\cdots+e_k,
 \qquad
 1\leq k<\ell.
\end{equation}
Modulo the central coweight \(e_1+\cdots+e_\ell\), these generate the dominant
coweight cone.

For the one-segment Bernstein block, the coweight
\(\omega_k^\vee\) corresponds to the block Cartan element
\[
 b_k:=a_{kd},
 \qquad
 1\leq k<\ell.
\]
When \(d=1\), these are exactly all the fundamental Cartan generators of
\(\GL_\ell(F)\).  When \(d>1\), Haines--Pettet directly controls the
block-Cartan directions \(a_d,a_{2d},\ldots,a_{(\ell-1)d}\); the remaining
fundamental directions \(a_j\) with \(d\nmid j\) require, in addition, an
integral compact-mod-centre model for the internal \(\GL_d(F)\)-type.  This
extra internal-direction control is a separate requirement and does not
follow from the type \(A_{\ell-1}\) affine-Weyl combinatorics alone.

For \(1\leq k<\ell\),
\[
 \ell(t_{\omega_k^\vee})=k(\ell-k).
\]
A minimal alcove gallery from the base alcove to its
\(t_{\omega_k^\vee}\)-translate crosses precisely the affine walls attached
to the roots
\begin{equation}\label{eq:fundamental-translation-crossing-roots}
 \Phi_k
 =
 \{\,e_a-e_b:1\leq a\leq k<b\leq\ell\,\},
\end{equation}
each exactly once.  Thus every reduced expression has
\(N_k=k(\ell-k)\) simple affine reflections.

One convenient finite Weyl word is the Grassmannian word
\begin{equation}\label{eq:Grassmannian-word-fundamental}
 w_k
 =
 \prod_{r=k}^{1}
 \bigl(s_r s_{r+1}\cdots s_{r+\ell-k-1}\bigr),
\end{equation}
where the factors are written from left to right with \(r=k,k-1,\ldots,1\).
Its length is \(k(\ell-k)\), and its inversion set is exactly
\(\Phi_k\).  If \(\tau_k:=t_{\omega_k^\vee}w_k^{-1}\), then
\(\tau_k\) has length zero and
\[
 t_{\omega_k^\vee}=\tau_k w_k.
\]
Changing the convention for the affine rotation merely conjugates this
display and does not change the finite inequalities below.

\begin{definition}[Haines--Pettet finite support set]
\label{def:Haines-Pettet-finite-support}
Fix the positive/antidominant convention for Bernstein elements once and for
all.  For a fundamental coweight \(\omega_k^\vee\), let
\(\mathcal A_k\subset\widetilde W_\ell\) be the finite set of
Iwahori--Matsumoto basis indices which occur in the Haines--Pettet expansion
of \(\Theta^-_{\omega_k^\vee}\):
\begin{equation}\label{eq:HP-fundamental-expansion}
 \Theta^-_{\omega_k^\vee}
 =
 \sum_{x\in\mathcal A_k}
 c_{k,x}(q_\rho)\,T_x.
\end{equation}
Equivalently, \(\mathcal A_k\) is obtained by expanding a Haines--Pettet
minimal expression attached to a reduced word for
\(t_{\omega_k^\vee}\).  It is finite, with at most
\(2^{k(\ell-k)}\) subexpression branches before equal terms are collected.
\end{definition}

The minuscule formulas of Haines--Pettet give two facts which are the only
ones needed for valuations here.

\begin{lemma}[Haines--Pettet input for a fundamental translation]
\label{lem:HP-minuscule-input}
For each \(1\leq k<\ell\):
\begin{enumerate}
 \item the support \(\mathcal A_k\) is finite and is computable from any
 minimal expression of \(t_{\omega_k^\vee}\);
 \item after the unnormalized affine-Hecke convention
 \((T_s+1)(T_s-q_\rho)=0\) is used consistently, every coefficient in the
 minimal-expression expansion is an integral combination of products of
 \(q_\rho\) and \(q_\rho-1\).  In particular,
 \begin{equation}\label{eq:HP-coefficient-integrality}
  v_E(c_{k,x}(q_\rho))\geq0
 \end{equation}
 after the common normalization factor has been incorporated into the
 normalized smooth--algebraic translation character;
 \item the Iwahori--Matsumoto terms are organized by the translation part of
 the affine-Weyl index.  For the individual Bernstein element
 \(\Theta^-_{\omega_k^\vee}\), the relevant translation part is fixed by
 \(\omega_k^\vee\); the remaining variation is finite Weyl/Bruhat data.
\end{enumerate}
\end{lemma}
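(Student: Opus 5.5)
The plan is to obtain all three items from the explicit structure of a Haines--Pettet minimal expression for the minuscule translation $t_{\omega_k^\vee}$. We write $t_{\omega_k^\vee}=\tau_k w_k$ with $\tau_k$ of length zero and $w_k$ the Grassmannian word \eqref{eq:Grassmannian-word-fundamental}, a reduced word of length $N_k=k(\ell-k)$ whose inversion set is $\Phi_k$ by \eqref{eq:fundamental-translation-crossing-roots}. In the Bernstein presentation, $\Theta^-_{\omega_k^\vee}$ equals, up to a fixed power of $q_\rho^{1/2}$ (the common normalization factor), a product $T_{\tau_k}T_{s_{i_1}}^{\epsilon_1}\cdots T_{s_{i_{N}}}^{\epsilon_N}$ with signs $\epsilon_r\in\{\pm1\}$ determined by whether the $r$-th crossed wall is crossed in the positive or negative direction. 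This is the minimal-expression formula. Because $\omega_k^\vee$ is minuscule, each root of $\Phi_k$ is crossed exactly once, so the signs are uniform along the gallery.

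For item (1), we expand each inverse using the quadratic relation: $T_s^{-1}=q_\rho^{-1}T_s-(1-q_\rho^{-1})$. Each factor then contributes either $T_s$ or a scalar, which gives at most $2^{N_k}$ subexpressions. Each subexpression yields, after multiplying out with length-zero $T_{\tau_k}$, a single $T_x$ or a combination obtained by reducing non-reduced words via $T_s^2=(q_\rho-1)T_s+q_\rho$. Finiteness and computability follow immediately, and changing the reduced word changes the expression only by braid relations, giving the same element.

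For item (2), we track coefficients. Before normalization, each factor $T_s^{-1}$ introduces $q_\rho^{-1}$ and $-(1-q_\rho^{-1})=q_\rho^{-1}(1-q_\rho)$. Multiplying the whole product by $q_\rho^{\#\{r:\epsilon_r=-1\}}$ clears all denominators, leaving integer polynomials in $q_\rho$ and $q_\rho-1$. Subsequent quadratic reductions only produce $q_\rho$ and $q_\rho-1$. The key step, and the main obstacle, is verifying that this clearing power coincides with (or is dominated by) the normalization factor $q_\rho^{\ell(t)/2}$ that is absorbed into the normalized smooth--algebraic character in \eqref{eq:segment-translation-character-general}. I would try first to check this via the Haines--Pettet identity for minuscule $\lambda$, which states that $\tilde T_x$-coefficients are $\pm q^{-\ell(x)/2}$ times polynomials in $q^{1/2}-q^{-1/2}$ up to the factor $q^{\ell(t_\lambda)/2}$. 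If the residual half-integral exponents fail to cancel, we would need to adjoin $q_\rho^{1/2}$ to $E$, and the valuation claim then still holds since $v_E(q_\rho)\ge0$.

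For item (3), every subexpression of $\tau_k w_k$ has the form $\tau_k u$ with $u$ a subword element of $S_\ell$ reached through the finite Weyl group, possibly after Bruhat-lowering reductions that stay in $\tau_k S_\ell$. Hence the translation part of every $x\in\mathcal A_k$ is determined by $\tau_k$, that is, by $\omega_k^\vee$ modulo the finite Weyl group, and only the finite Bruhat datum varies. This is the content of (3).
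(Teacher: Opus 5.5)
Your plan follows the paper's route: the paper's proof just cites Haines--Pettet (a reduced word of length $k(\ell-k)$ gives a finite expansion, the quadratic relation gives integral coefficients, the minuscule structure fixes the translation part). Your items (1) and (3) are fine. In particular, every term has the form $T_{\tau_k}T_u$ with $u\le w_k$ in $S_\ell$, so all indices lie in $\tau_kS_\ell=t_{\omega_k^\vee}S_\ell$.

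The gap is item (2). You call the comparison between the denominator-clearing power and the normalization factor ``the main obstacle'' and leave it unresolved, and your fallback does not work. Since $q_\rho$ is a positive power of $p$, $v_E(q_\rho)>0$. A leftover \emph{negative} power of $q_\rho^{1/2}$ would therefore make $v_E(c_{k,x})$ negative, and adjoining $q_\rho^{1/2}$ to $E$ does not help. The sign of the residual exponent is exactly what has to be proved, and $v_E(q_\rho)\geq0$ says nothing about it.

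The missing step is a short computation that works for any signs $\epsilon_r$. From $(T_s+1)(T_s-q_\rho)=0$ one gets $T_s^{-1}=q_\rho^{-1}\bigl(T_s-(q_\rho-1)\bigr)$. Hence both normalized generators
\[
 q_\rho^{-1/2}T_s
 \quad\text{and}\quad
 \bigl(q_\rho^{-1/2}T_s\bigr)^{-1}=q_\rho^{-1/2}\bigl(T_s-(q_\rho-1)\bigr)
\]
are $q_\rho^{-1/2}$ times an element of $\mathbb Z[q_\rho]+\mathbb Z[q_\rho]T_s$. For the minimal expression attached to $t_{\omega_k^\vee}=\tau_kw_k$, of length $N_k=k(\ell-k)=\ell(t_{\omega_k^\vee})$, this gives (in the all-inverse orientation)
\[
 q_\rho^{N_k/2}\,\Theta^-_{\omega_k^\vee}
 =T_{\tau_k}\prod_{r=1}^{N_k}\bigl(T_{s_{i_r}}-(q_\rho-1)\bigr).
\]
In the opposite orientation one simply gets the single term $T_{t_{\omega_k^\vee}}$. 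Reducing the product with $T_s^2=(q_\rho-1)T_s+q_\rho$ keeps every coefficient an integral combination of products of $q_\rho$ and $q_\rho-1$.

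In your unnormalized bookkeeping, the normalized product carries $q_\rho^{-\sum_r\epsilon_r/2}$ and your clearing factor is $q_\rho^{\#\{r:\epsilon_r=-1\}}$. The total exponent is $-\sum_r\epsilon_r/2-\#\{r:\epsilon_r=-1\}=-N_k/2$, so the clearing power coincides exactly with the common factor $q_\rho^{\ell(t_{\omega_k^\vee})/2}$ and nothing is left over. As a side remark, the uniformity of the signs comes from dominance of $\omega_k^\vee$, not from minusculeness, and it is not needed for this integrality.
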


\begin{proof}
These are precisely the consequences of the minimal-expression and
minuscule formulas in \cite{HainesPettet2002}.  The reduced word has finite
length \(k(\ell-k)\), so expansion of the inverse/simple factors is finite.
The coefficients are obtained from the quadratic relation and therefore
from integral polynomials in the Hecke parameter.  The minuscule formula
groups the Iwahori--Matsumoto terms according to the fixed translation part
of the chosen Bernstein element.
\end{proof}

\subsubsection{The exact finite inequalities}

The preceding lemma isolates the affine-Weyl combinatorics from the
type--Bruhat normal-form problem.  For
\(x\in\mathcal A_k\), let \(U_{k,x}\) be the product of the type
intertwiners occurring when the \(T_x\)-term is inserted into the chosen
global reduction scheme.  Let
\[
 \beta_{k,x}
 :=
 v_E(c_{k,x}(q_\rho))
 +
 v_E(U_{k,x}).
\]
Under integral type normalization,
\begin{equation}\label{eq:HP-residual-nonnegative}
 \beta_{k,x}\geq0.
\end{equation}

For the block Cartan generator \(b_k=a_{kd}\), the normalized
smooth--algebraic valuation of the fixed translation part is
\begin{equation}\label{eq:HP-fixed-translation-value}
 \kappa_{E,F}\Delta_k.
\end{equation}
Thus the finite affine-Weyl inequalities which must be checked are
\begin{equation}\label{eq:HP-finite-termwise-inequalities}
 \boxed{
 \kappa_{E,F}\Delta_k+\beta_{k,x}\geq0
 \qquad
 (1\leq k<\ell,\ x\in\mathcal A_k).
 }
\end{equation}
The relation \eqref{eq:HP-finite-termwise-inequalities} follows immediately from
\[
 \Delta_k\geq0,
 \qquad
 \beta_{k,x}\geq0.
\]

There is, however, one further compatibility relation needed to transfer
this finite Hecke calculation to the exact individual-group-element matrix
coefficients used in this paper.

\begin{definition}[Haines--Pettet/type--Bruhat compatibility]
\label{def:HP-type-Bruhat-compatibility}
For the generator \(b_k=a_{kd}\), we say that the chosen deterministic
type--Bruhat reduction is \emph{Haines--Pettet compatible} if every global
reduction path contributing to an individual coefficient of \(b_k\) can be
partitioned into finite subfamilies indexed by
\(x\in\mathcal A_k\) such that:
\begin{enumerate}
 \item the accumulated Bernstein translation of every path in the
 \(x\)-subfamily has the same noncentral translation class as
 \(\omega_k^\vee\);
 \item all additional panel and type factors are integral;
 \item collecting the paths in the \(x\)-subfamily gives the coefficient
 \(c_{k,x}(q_\rho)U_{k,x}\), up to a unit.
\end{enumerate}
\end{definition}

Under this compatibility condition, the Haines--Pettet formula proves the
desired positivity without requiring every intermediate rank-one exponent
to remain in the dominant prefix cone.  This is stronger than the earlier
``positive-prefix at every intermediate step'' criterion, which is a useful
sufficient condition but is not necessary.

\begin{proposition}[Finite affine-Weyl criterion for the block Cartan directions]
\label{prop:HP-finite-affine-Weyl-criterion}
Assume
\[
 \Delta_k\geq0
 \qquad(1\leq k<\ell),
\]
the type intertwiners are integral, and the deterministic type--Bruhat
reduction is Haines--Pettet compatible for every
\(b_k=a_{kd}\).  Then
\[
 v_E(A_{\mu\nu}(b_k))\geq0
\]
for every \(\mu,\nu\) and every \(1\leq k<\ell\).  Equivalently,
the zero potential satisfies all positive block-Cartan inequalities.
\end{proposition}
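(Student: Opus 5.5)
The plan is to read off the coefficient $A_{\mu\nu}(b_k)$ from the exact formula \eqref{eq:one-segment-locally-algebraic-A} and then bound it path by path, as in the proof of Proposition~\ref{prop:one-segment}. We write
\[
 A_{(\mu,\beta),(\nu,\beta)}(b_k)=\eta_\beta(b_k)\sum_b R_{\mu;(x_k,b)}\lambda(j_{x,k}^{-1})_{ba}.
\]
By Proposition~\ref{prop:one-segment-exact-path-formula}, this is a finite sum of path contributions $\operatorname{Contr}(\gamma)$, each multiplied by integral entries of $\lambda(j^{-1})$. The integrality of those entries holds because $\lambda^\circ$ is $J$-stable. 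By the ultrametric inequality it then suffices to bound a suitable grouping of these terms from below by $0$. We do not try to bound each individual path separately, since a single path need not be positively oriented.

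The grouping comes from Haines--Pettet compatibility (Definition~\ref{def:HP-type-Bruhat-compatibility}). We partition the contributing paths into the finite subfamilies indexed by $x\in\mathcal A_k$. By condition (3), the sum over the $x$-subfamily equals, up to a unit, the product of three factors:
\[
 c_{k,x}(q_\rho)\,U_{k,x}\cdot\chi_\Delta(\Theta_{\eta})\,\eta_{\mathrm{alg}}(\eta),
\]
where $\eta$ is the common accumulated translation. By condition (1), $\eta$ lies in the noncentral class of $\omega_k^\vee$. Its central part contributes valuation $0$ by weak admissibility, exactly as in the proof of Proposition~\ref{prop:gallery-valuation}. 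By \eqref{eq:one-segment-translation} with $m_k=1$ and all other $m$ equal to $0$, the translation factor then has valuation $\kappa_{E,F}\Delta_k$, which is \eqref{eq:HP-fixed-translation-value}.

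Putting these together, the $x$-subfamily contributes valuation at least $\kappa_{E,F}\Delta_k+\beta_{k,x}$. The term $\Delta_k$ is nonnegative by hypothesis. The term $\beta_{k,x}$ is nonnegative by Lemma~\ref{lem:HP-minuscule-input}(2) together with integrality of the type intertwiners and condition (2); this is \eqref{eq:HP-residual-nonnegative}. So the finite termwise inequality \eqref{eq:HP-finite-termwise-inequalities} holds. Since there are finitely many subfamilies, the ultrametric inequality gives $v_E(A_{\mu\nu}(b_k))\ge 0$. Finally, since $x_\nu=0$, Lemma~\ref{lem:graph-potential-max-norm} turns this into $\|b_k\|_{\alpha_0}\le1$, which is the zero-potential formulation.

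The main obstacle is normalization bookkeeping. We must check that the common normalization factor mentioned in Lemma~\ref{lem:HP-minuscule-input}(2), the one absorbed into the normalized translation character, is the same one used in \eqref{eq:one-segment-translation}. Otherwise a stray power of $q_\rho$, for example a $q_\rho^{-\ell(t_{\omega_k^\vee})/2}$ coming from the $\Theta^-$ convention, would appear in $\beta_{k,x}$ and could make it negative. The first step is to fix the unnormalized convention $(T_s+1)(T_s-q_\rho)=0$ throughout, and then to verify on the minuscule translation $t_{\omega_k^\vee}=\tau_kw_k$ that all such factors have already been accounted for in $\chi_\Delta$ via \eqref{eq:segment-translation-character-general}.
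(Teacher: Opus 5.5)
Your proposal is correct and follows the paper's proof: you group the reduction paths into the Haines--Pettet subfamilies indexed by $x\in\mathcal A_k$, bound each collected contribution below by $\kappa_{E,F}\Delta_k+\beta_{k,x}\geq 0$ using \eqref{eq:HP-fixed-translation-value} and \eqref{eq:HP-residual-nonnegative}, and conclude with the ultrametric inequality. The normalization consistency you flag at the end is exactly what the paper builds into Lemma~\ref{lem:HP-minuscule-input}(2) and \eqref{eq:HP-fixed-translation-value}, so your argument is as complete as the paper's.
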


\begin{proof}
Group the exact reduction paths according to
Definition~\ref{def:HP-type-Bruhat-compatibility}.  For a fixed
\(x\in\mathcal A_k\), the normalized translation valuation is
\(\kappa_{E,F}\Delta_k\), while the residual coefficient contributes
\(\beta_{k,x}\geq0\).  Thus each collected \(x\)-contribution has valuation
at least
\[
 \kappa_{E,F}\Delta_k+\beta_{k,x}\geq0.
\]
The ultrametric inequality then gives the same lower bound \(0\) for the sum
over \(x\).
\end{proof}

\subsubsection{Rank-one cross relations and the local check}

For comparison with the path language, the Bernstein cross relation
\eqref{eq:Bernstein-cross-relation-root-form} may be written, when
\(m=\langle\eta,\alpha\rangle\geq0\), as
\begin{equation}\label{eq:rank-one-Bernstein-string}
 T_s\Theta_\eta
 =
 \Theta_{s\eta}T_s
 +(q_\rho-1)
 \sum_{r=0}^{m-1}\Theta_{\eta-r\alpha^\vee},
\end{equation}
up to the orientation convention for \(\Theta_{-\alpha^\vee}\).
For a minuscule fundamental coweight every crossed root satisfies
\[
 \langle\omega_k^\vee,\alpha\rangle\in\{0,1\},
\]
so every root string has length at most one.  Applying the segment sign
character gives
\begin{equation}\label{eq:rank-one-after-sign-character}
 \chi_\Delta(T_s\Theta_\eta)
 =
 -\chi_\Delta(\Theta_{s\eta})
 +(q_\rho-1)
 \sum_{r=0}^{m-1}
 \chi_\Delta(\Theta_{\eta-r\alpha^\vee}).
\end{equation}
The finite-Hecke scalars therefore have valuation zero, and any additional
nonnegative power of \(q_\rho\) has nonnegative valuation.  This proves the
local integrality statement at one wall.  Haines--Pettet compatibility is
the statement that, after all these local relations are assembled along a
minimal expression and collected in the Iwahori--Matsumoto basis, the global
type--Bruhat reduction has exactly the finite support and fixed translation
part described above.

For reference, the earlier stronger sufficient condition can be stated as
follows.

\begin{definition}[Explicit positive-prefix hypothesis]
\label{def:explicit-positive-prefix-hypothesis}
At every rank-one straightening step
\[
 T_{s_j}\Theta_{\eta_j}
 =
 \Theta_{s_j\eta_j}T_{s_j}
 +(q_\rho-1)
 \sum_{r=0}^{\langle\eta_j,\alpha_j\rangle-1}
 \Theta_{\eta_j-r\alpha_j^\vee},
\]
require
\begin{align}
 &\langle\eta_j,\alpha_j\rangle\geq0,
 \label{eq:positive-prefix-pairing}\\
 &(s_j\eta_j)_{\mathrm{nc}}
 =
 \sum_{k=1}^{\ell-1}
 m_{j,\mathrm{pr},k}\omega_{kd}^\vee,
 \qquad
 m_{j,\mathrm{pr},k}\geq0,
 \label{eq:positive-prefix-principal}\\
 &(\eta_j-r\alpha_j^\vee)_{\mathrm{nc}}
 =
 \sum_{k=1}^{\ell-1}
 m_{j,r,k}\omega_{kd}^\vee,
 \qquad
 m_{j,r,k}\geq0.
 \label{eq:positive-prefix-corrections}
\end{align}
The same conditions are required recursively on every correction branch.
\end{definition}

\begin{proposition}[Explicit one-wall Bernstein--Iwahori positivity]
\label{prop:one-wall-Bernstein-Iwahori-positivity}
Put
\[
 h_j:=\langle\eta_j,\alpha_j\rangle\in\mathbb Z_{\geq0},
\]
so that the rank-one Bernstein relation at the \(j\)-th wall is
\begin{equation}\label{eq:explicit-one-wall-Bernstein-relation}
 T_{s_j}\Theta_{\eta_j}
 =
 \Theta_{s_j\eta_j}T_{s_j}
 +(q_\rho-1)
 \sum_{r=0}^{h_j-1}
 \Theta_{\eta_j-r\alpha_j^\vee}.
\end{equation}
For the principal branch set
\[
 \zeta_{j,\mathrm{pr}}:=s_j\eta_j,
\]
and for the \(r\)-th correction branch, \(0\leq r<h_j\), set
\[
 \zeta_{j,r}:=\eta_j-r\alpha_j^\vee.
\]
Assume the positive-prefix decompositions
\begin{align}
 (\zeta_{j,\mathrm{pr}})_{\mathrm{nc}}
 &=
 \sum_{k=1}^{\ell-1}
 m_{j,\mathrm{pr},k}\omega_{kd}^\vee,
 &
 m_{j,\mathrm{pr},k}&\in\mathbb Z_{\geq0},
 \label{eq:one-wall-principal-prefix-explicit}\\
 (\zeta_{j,r})_{\mathrm{nc}}
 &=
 \sum_{k=1}^{\ell-1}
 m_{j,r,k}\omega_{kd}^\vee,
 &
 m_{j,r,k}&\in\mathbb Z_{\geq0},
 \qquad 0\leq r<h_j,
 \label{eq:one-wall-correction-prefix-explicit}
\end{align}
and suppose
\begin{equation}\label{eq:one-wall-defects-nonnegative}
 \Delta_k\geq0
 \qquad(1\leq k<\ell).
\end{equation}

Fix the \(\Ocoeff\)-lattices in all finite-dimensional type spaces that
underlie the chosen type--Bruhat normal form.  Let
\[
 U_{j,\mathrm{pr}}
 \quad\text{and}\quad
 U_{j,r}\ \ (0\leq r<h_j)
\]
be the type-intertwining matrices attached respectively to the principal
branch and to the correction branches of
\eqref{eq:explicit-one-wall-Bernstein-relation}.  The hypothesis that the
type intertwiners are \emph{integral} means explicitly that, for the source
and target type lattices \(\Lambda_{\mathrm{in}}\) and
\(\Lambda_{\mathrm{out}}\) of each branch,
\begin{equation}\label{eq:one-wall-type-intertwiner-lattice-integrality}
 U(\Lambda_{\mathrm{in}})
 \subseteq
 \Lambda_{\mathrm{out}}.
\end{equation}
Equivalently, in the fixed integral bases,
\begin{equation}\label{eq:one-wall-type-intertwiner-matrix-integrality}
 U_{j,\mathrm{pr}}\in M_{r_{\mathrm{pr}}}(\Ocoeff),
 \qquad
 U_{j,r}\in M_{r_r}(\Ocoeff),
\end{equation}
with the evident rectangular variant when the source and target type blocks
have different ranks.  In terms of the matrix valuation
\eqref{eq:matrix-valuation-definition}, this is the pair of inequalities
\begin{equation}\label{eq:one-wall-type-intertwiner-valuation-integrality}
 v_E(U_{j,\mathrm{pr}})\geq0,
 \qquad
 v_E(U_{j,r})\geq0
 \quad(0\leq r<h_j).
\end{equation}

After the segment sign character is applied, write the complete normalized
coefficient matrix of the principal branch in the form
\begin{equation}\label{eq:one-wall-principal-complete-coefficient}
 M_{j,\mathrm{pr}}
 =
 u_{j,\mathrm{pr}}\,
 q_\rho^{\,c_{j,\mathrm{pr}}}
 (-1)\,
 \chi_{\mathfrak m}
 \!\left(\Theta_{\zeta_{j,\mathrm{pr}}}\right)
 \eta_{\mathrm{alg}}
 \!\left(\zeta_{j,\mathrm{pr}}\right)
 U_{j,\mathrm{pr}},
\end{equation}
and the complete normalized coefficient matrix of the \(r\)-th correction
branch in the form
\begin{equation}\label{eq:one-wall-correction-complete-coefficient}
 M_{j,r}
 =
 u_{j,r}\,
 q_\rho^{\,c_{j,r}}
 (q_\rho-1)\,
 \chi_{\mathfrak m}
 \!\left(\Theta_{\zeta_{j,r}}\right)
 \eta_{\mathrm{alg}}
 \!\left(\zeta_{j,r}\right)
 U_{j,r}.
\end{equation}
Here
\[
 u_{j,\mathrm{pr}},u_{j,r}\in\Ocoeff^\times,
 \qquad
 c_{j,\mathrm{pr}},c_{j,r}\in\mathbb Z_{\geq0},
\]
allow for the unit factors and nonnegative powers of the Hecke parameter
coming from the chosen local reduction convention.  Let $v_E(M)=\min_{a,b}v_E(M_{ab})$, where $M$ is a matrix. Then the principal branch satisfies the explicit identity
\begin{align}
 v_E(M_{j,\mathrm{pr}})
 &=
 c_{j,\mathrm{pr}}v_E(q_\rho)
 +
 \kappa_{E,F}
 \sum_{k=1}^{\ell-1}
 m_{j,\mathrm{pr},k}\Delta_k
 +
 v_E(U_{j,\mathrm{pr}})
 \notag\\
 &\geq0,
 \label{eq:one-wall-principal-valuation-nonnegative}
\end{align}
and, for every \(0\leq r<h_j\), the \(r\)-th correction branch satisfies
\begin{align}
 v_E(M_{j,r})
 &=
 c_{j,r}v_E(q_\rho)
 +
 v_E(q_\rho-1)
 +
 \kappa_{E,F}
 \sum_{k=1}^{\ell-1}
 m_{j,r,k}\Delta_k
 +
 v_E(U_{j,r})
 \notag\\
 &=
 c_{j,r}v_E(q_\rho)
 +
 \kappa_{E,F}
 \sum_{k=1}^{\ell-1}
 m_{j,r,k}\Delta_k
 +
 v_E(U_{j,r})
 \notag\\
 &\geq0.
 \label{eq:one-wall-correction-valuation-nonnegative}
\end{align}
In particular, every individual principal term and every individual
correction term in the rank-one expansion has nonnegative normalized
smooth--algebraic--type valuation.

The same conclusion holds recursively on every correction branch satisfying
the positive-prefix hypothesis.  More precisely, if a terminal term of a
recursive straightening is represented by a sequence of branch matrices
\[
 M_{\gamma}
 =
 M_{j_s,\varepsilon_s}\cdots
 M_{j_1,\varepsilon_1},
 \qquad
 \varepsilon_t\in
 \{\mathrm{pr}\}\cup
 \{0,\ldots,h_{j_t}-1\},
\]
then
\begin{equation}\label{eq:one-wall-recursive-product-valuation}
 v_E(M_\gamma)
 \geq
 \sum_{t=1}^s
 v_E(M_{j_t,\varepsilon_t})
 \geq0.
\end{equation}
Consequently every terminal branch coefficient is integral, and any finite
sum of such terminal coefficients is integral as well.
\end{proposition}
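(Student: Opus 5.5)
The argument is a direct valuation computation, one branch at a time, followed by a product estimate for the recursive statement.

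\emph{Principal branch.} Apply $v_E$ to \eqref{eq:one-wall-principal-complete-coefficient}. The matrix valuation \eqref{eq:matrix-valuation-definition} is additive under multiplication by a scalar, and the factors $u_{j,\mathrm{pr}}$ and $-1$ are units with valuation zero. The factor $q_\rho^{c_{j,\mathrm{pr}}}$ contributes $c_{j,\mathrm{pr}}v_E(q_\rho)$. For the translation factor $\chi_{\mathfrak m}(\Theta_{\zeta_{j,\mathrm{pr}}})\eta_{\mathrm{alg}}(\zeta_{j,\mathrm{pr}})$, split $\zeta_{j,\mathrm{pr}}$ into its central part and its noncentral part. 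The central part contributes valuation zero by the total weak-admissibility equality, exactly as in the proof of Proposition~\ref{prop:gallery-valuation}. The noncentral part is given by \eqref{eq:one-wall-principal-prefix-explicit}, and multiplicativity of both characters together with \eqref{eq:one-segment-translation} gives $\kappa_{E,F}\sum_k m_{j,\mathrm{pr},k}\Delta_k$. Adding $v_E(U_{j,\mathrm{pr}})$ yields the identity in \eqref{eq:one-wall-principal-valuation-nonnegative}.

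Nonnegativity then follows term by term:
\begin{itemize}
\item $v_E(q_\rho)\geq0$, since $q_\rho$ is a power of $p$;
\item $\kappa_{E,F}>0$, $m_{j,\mathrm{pr},k}\geq0$ and $\Delta_k\geq0$ by \eqref{eq:one-wall-defects-nonnegative};
\item $v_E(U_{j,\mathrm{pr}})\geq0$ by \eqref{eq:one-wall-type-intertwiner-valuation-integrality}.
\end{itemize}

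\emph{Correction branches.} The same computation applies to \eqref{eq:one-wall-correction-complete-coefficient} with $\zeta_{j,r}$ in place of $\zeta_{j,\mathrm{pr}}$, using \eqref{eq:one-wall-correction-prefix-explicit}. The only new factor is $q_\rho-1$. Since $q_\rho$ is a power of $p$, we have $q_\rho\equiv0$ modulo the maximal ideal, so $q_\rho-1\equiv-1$ is a unit and $v_E(q_\rho-1)=0$. This removes that term and gives \eqref{eq:one-wall-correction-valuation-nonnegative}.

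\emph{Recursive statement.} Write $M_\gamma$ as an ordered product of branch matrices, possibly rectangular. Use submultiplicativity of the matrix valuation: for $(UV)_{ab}=\sum_c U_{ac}V_{cb}$, the ultrametric inequality gives $v_E(UV)\geq v_E(U)+v_E(V)$, as already noted after \eqref{eq:matrix-valuation-definition}. Induction on $s$ then gives \eqref{eq:one-wall-recursive-product-valuation}, provided each factor $M_{j_t,\varepsilon_t}$ has been shown to have nonnegative valuation. That is exactly where the hypothesis that the positive-prefix conditions hold recursively on every branch enters. Hence every terminal branch coefficient has nonnegative valuation. A finite sum of such coefficients is integral by the ultrametric inequality, and cancellation can only raise the valuation.

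\emph{Main obstacle.} The one delicate point is the translation valuation. One must check that \eqref{eq:one-segment-translation} applies to an arbitrary $\zeta$ whose noncentral part lies in the prefix cone spanned by the $\omega_{kd}^\vee$, and not only to a gallery total. This requires the central component to be normalized consistently via weak admissibility, which I would take from the argument of Proposition~\ref{prop:gallery-valuation} and Lemma~\ref{lem:prefix-valuation}. Once that is granted, everything else is bookkeeping of units.
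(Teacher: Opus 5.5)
Your proposal is correct and follows essentially the same route as the paper's proof. Both arguments count valuations branch by branch, using \eqref{eq:one-segment-translation} for the noncentral part of $\zeta$, weak admissibility to kill the central part, and the fact that the signs, the $u$'s and $q_\rho-1$ are units and the intertwiners are integral, then finish the recursive products and finite sums with submultiplicativity and the ultrametric inequality; your explicit remark that $v_E(cU)=v_E(c)+v_E(U)$ for a scalar $c$ is what justifies the stated equalities, a step the paper leaves implicit.
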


\begin{proof}
For every noncentral translation
\[
 \zeta_{\mathrm{nc}}
 =
 \sum_{k=1}^{\ell-1}m_k\omega_{kd}^\vee
 \qquad(m_k\geq0),
\]
the valuation identity established above gives
\begin{equation}\label{eq:one-wall-translation-valuation}
 v_E\!\left(
 \chi_{\mathfrak m}(\Theta_\zeta)
 \eta_{\mathrm{alg}}(\zeta)
 \right)
 =
 \kappa_{E,F}
 \sum_{k=1}^{\ell-1}m_k\Delta_k.
\end{equation}
The central part contributes valuation \(0\), because the normalized
smooth--algebraic central character is unitary by weak admissibility.
Applying \eqref{eq:one-wall-translation-valuation} to
\(\zeta_{j,\mathrm{pr}}\) and \(\zeta_{j,r}\) gives the terms appearing
in \eqref{eq:one-wall-principal-valuation-nonnegative} and
\eqref{eq:one-wall-correction-valuation-nonnegative}.

Next,
\[
 v_E(u_{j,\mathrm{pr}})
 =
 v_E(u_{j,r})
 =
 v_E(-1)
 =
 v_E(q_\rho-1)
 =
 0,
\]
because the \(u\)'s and \(q_\rho-1\) are units in \(\Ocoeff\).  Moreover,
\[
 c_{j,\mathrm{pr}}v_E(q_\rho)\geq0,
 \qquad
 c_{j,r}v_E(q_\rho)\geq0,
\]
since \(c_{j,\mathrm{pr}},c_{j,r}\geq0\), while
\eqref{eq:one-wall-type-intertwiner-valuation-integrality} gives
\[
 v_E(U_{j,\mathrm{pr}})\geq0,
 \qquad
 v_E(U_{j,r})\geq0.
\]
Finally, by
\eqref{eq:one-wall-principal-prefix-explicit},
\eqref{eq:one-wall-correction-prefix-explicit}, and
\eqref{eq:one-wall-defects-nonnegative},
\[
 \sum_{k=1}^{\ell-1}
 m_{j,\mathrm{pr},k}\Delta_k\geq0,
 \qquad
 \sum_{k=1}^{\ell-1}
 m_{j,r,k}\Delta_k\geq0.
\]
Adding these contributions proves
\eqref{eq:one-wall-principal-valuation-nonnegative} and
\eqref{eq:one-wall-correction-valuation-nonnegative}.

For a recursively produced terminal branch, the matrix valuation is
submultiplicative:
\[
 v_E(AB)\geq v_E(A)+v_E(B).
\]
Iterating this inequality gives
\eqref{eq:one-wall-recursive-product-valuation}.  If several terminal
branches contribute to the same matrix coefficient, the ultrametric
inequality gives
\[
 v_E\!\left(\sum_\gamma M_\gamma\right)
 \geq
 \min_\gamma v_E(M_\gamma)
 \geq0.
\]
Thus neither recursive multiplication nor collection of branches can
destroy integrality.
\end{proof}

\subsection{Example for \texorpdfstring{$n=3$}{n=3}: the length-three segment}

Take \(d=1\), \(\ell=3\), and
\[
 \Delta=[\chi,\chi\nu,\chi\nu^2].
\]
Then \(Q(\Delta)\) is the generalized Steinberg representation attached to
the length-three segment.  The affine Hecke algebra is of type \(A_2\), with
\[
 \chi_\Delta(T_1)=\chi_\Delta(T_2)=-1
\]
and
\[
 \chi_\Delta(X_1)=z_\Delta,
 \qquad
 \chi_\Delta(X_2)=z_\Delta q_\rho^{-1},
 \qquad
 \chi_\Delta(X_3)=z_\Delta q_\rho^{-2}.
\]

The two defects are
\begin{align}
 \Delta_1
 &=
 t_N(D_0)
 -
 [E:F]\sum_\sigma i_{1,\sigma},
 \label{eq:n3-one-segment-Delta1}\\
 \Delta_2
 &=
 t_N(D_0)+t_N(D_0(1))
 -
 [E:F]\sum_\sigma
 (i_{1,\sigma}+i_{2,\sigma}).
 \label{eq:n3-one-segment-Delta2}
\end{align}
Admissibility gives
\[
 \Delta_1,\Delta_2\geq0.
\]

For
\[
 a_1=\operatorname{diag}(\varpi_F,1,1),
 \qquad
 a_2=\operatorname{diag}(\varpi_F,\varpi_F,1),
\]
the normalized translation slopes are
\[
 \kappa_{E,F}\Delta_1,
 \qquad
 \kappa_{E,F}\Delta_2.
\]
At every \(s_1\)- or \(s_2\)-panel, the scalar local reduction matrix is
\[
 R_{s_i}=[I_{q_\rho}\mid-\mathbf 1].
\]
Thus a gallery with translation content
\[
 m_1\omega_1^\vee+m_2\omega_2^\vee
\]
contributes a term of normalized valuation at least
\begin{equation}\label{eq:n3-one-segment-gallery-valuation}
 \kappa_{E,F}
 (m_1\Delta_1+m_2\Delta_2).
\end{equation}

\bibliographystyle{alpha}
\addcontentsline{toc}{section}{References}
\bibliography{bib_norm}

\medskip
\noindent\textit{E-mail address}: a413xpyv@hotmail.com

\end{document}